\newtheorem{theorem}{Theorem}[subsection]
\newtheorem{lemma}[theorem]{Lemma}
\newtheorem{proposition}[theorem]{Proposition}
\def\math#1{{\mathbb{#1}}}
\def\RM{\math R}
\def\QM{\math Q}
\def\CM{\math C}
\def\ZM{\math Z}
\def\bs{\backslash}
\def\Hom{\,\mathrm{ Hom}} 
\def\vol{\,\mathrm{ vol}\,\,}
\def\Im{\,\,\mathrm{ Im}}
\def\det{\,\mathrm{ det}}
\def\Re{\mathrm{ Re}}
\def\ade{{\math A}}
\def\mun{^{-1}}
\def\vf{\varphi}
\def\vp{\varpi}
\def\G{G}
\def\P{P}
\def\H{{H}}
\def\N{{N}}
\def\GG{{\mathbf G}}
\def\PP{{\mathbf P}}
\def\QQ{{\mathbf Q}}
\def\NN{{\mathbf N}}
\def\AG{{\mathfrak A}_G}
\def\AP{{\mathfrak A}_P}
\def\APO{{\mathfrak A_{P_0}}}
\def\goth{\mathfrak} 
\def\ga{\goth a}
\def\ag{\goth a_G}
\def\ap{\goth a_P}
\def\apo{\goth a_{P_0}}
\def\aq{\goth a_Q}
\def\ar{\goth a_R}
\def\iapg{\Lambda_P}
\def\iapgo{\Lambda_{P_0}}
\def\I{\mathcal I}
\def\B{\mathcal B}
\def\pni{\par\noindent}
\def\CMC{{\mathcal C}}
\def\FMC{{\mathcal F}}
\def\DMC{{\mathcal D}}
\def\GMC{{\mathcal G}}
\def\AMC{\mathcal A}
\def\AMCP{{\mathcal A'}}
\def\PMC{\mathcal P}
\def\QMC{\mathcal Q}
\def\HMC{\mathcal H}
\def\HMCP{{\mathcal H_P}}
\def\LMC{\mathcal L}
\def\LMCP{{\LMC_P}}
\def\HMCPA{{\HMC_P^0}}
\def\LMCPO{{\LMC_{P_0}}}
\def\LMCQ{{\LMC_Q}}
\def\LMCD{\LMC_{\mathrm{disc}}}
\def\LMCC{\LMC_{\mathrm{cusp}}}
\def\Mint{{M}}
\def\mint{{m}}
\def\Ei{E}
\def\domfond{\mathfrak D}
\def\Mlev{\mathbf M}
\def\hvf{\hat\vf}
\def\trunc{\mathbf{\Lambda}^T}
\def\weyl{W}
\def\Zeta{L}
\def\ptf{\,\,.}
\def\com#1{\!\!\quad\hbox{#1}\quad\!\!}
\def\comm#1{\qquad\hbox{#1}\qquad}
 \title{The Langlands spectral decomposition}
\author{ J.-P. Labesse\\Institut Math\'ematique de Luminy\\ UMR 7373}
\date{}
\begin{document}
\maketitle

\begin{abstract}

We  review the standard definitions for basic objects in automorphic theory
 and then give an overview of Langlands fundamental results established  in \cite{LES}. We try to explain
ideas behind the proof when reasonably simple following mainly the surveys \cite{LB2} and \cite{A1}.
We emphasize the role of the truncation operator which appears for the first time, but in some guise, in \cite{LB2}.
In the last sections we explain  the formal aspects of the spectral decomposition for the space of
 $K$-invariant functions on $GL(2)$ and $GL(3)$ being otherwise rather sloppy on analytic questions.

We assume the reader familiar with basic representation theory, linear algebraic groups and adles.
We must apologize for copying, most of the time, parts of \cite{LB2} and \cite{A1}.
We only have given in greater details some elementary arguments
that were maybe a bit too sketchy in these references. On the other hand we made no attempt to be more
explicit than these surveys on the most difficult part of the proof.

\end{abstract}

\tableofcontents

\section{Introduction}

\subsection{Langlands fundamental paper}
Consider a connected reductive group $\GG$ defined over $\QM$
and $\Gamma$ an arithmetic subgroup of the connected reductive Lie group
$\G=\GG(\RM)^0$.
The spectral decomposition of the right regular representation of $G$ on
the Hilbert space $\LMC$ of square integrable function on the quotient space  $\Gamma\bs \G$:
$$\LMC=L^2(\Gamma\bs \G)$$
 is one of the two most important results of the long paper  written by Langlands
in the early sixties and completed in 1964
under the title: 
\par\hfil{\bf On the Functional Equations Satisfied by Eisenstein Series}. \pni
The second main result is given in the title. Both rely on the analytical continuation
of Eisenstein series $E(x;\Phi,\lambda)$ and of intertwining operators 
$\Mint(s,\lambda)$ that show up in their functional equations. 
A mimeographed version  
of this long and difficult manuscript was circulated 
but remained unpublished until 1976 when it appeared, together with a Preface
and four appendices added, as Springer Lecture Notes 544 \cite{LES}.  

This almost  explicit description  (see \ref{rp} below) of the spectral decomposition obtained by Langlands
is a basic tool for the study of automorphic forms, in particular to establish the  
Trace Formula which, in turn, is one of the most powerful tools toward this study. In fact,
already in the early sixties, Langlands had in mind to work on a generalization of Selberg's
Trace Formula to all reductive group.
Moreover, instances of  intertwining operators $\Mint(s,\lambda)$
were soon recognized as expressible in terms of a new family of $L$-functions \cite{LEP}
and this, in turn, was at the origin of the Functoriality Conjectures \cite{LLMA}.

\subsection{Classical versus adŽlic framework}

The original monograph  
is written in the classical (non adŽlic) language and deals
with discrete subgroups $\Gamma$ of $\G$
 satisfying  technical assumptions that are automatically satisfied when 
 $\Gamma$ is an arithmetic subgroup and {\it a fortiori} when $\Gamma$ is a congruence subgroup.
General arithmetic subgroups are important in geometry while congruence subgroups and Hecke operators 
play a central role in number theory.
The consideration of Hecke operators leads naturally to work with 
projective limits of coverings defined by congruence subgroups and
 this essentially amounts to work with $\GG(\QM)\bs \GG(\ade)$
 but thus we by-pass more general arithmetic subgroups. 
 It turns out to be simpler to deal with adŽlic quotients. 
 Moreover it is also necessary to use adŽlic language for the formulation of
 Langlands functoriality principles. We observe that the
appendices II, III and IV added in \cite{LES}
by Langlands in the seventies to the original monograph  
are written in adŽlic language. From now on, we restrict ourselves to the adŽlic setting,
nevertheless we shall use a set of notation similar to what is used in the classical setting.

\subsection{Remarks on the strategy}

An abstract non sense tells us that  the spectral decomposition of $\LMC$
can be described in term of 
generalized vectors $E(\bullet;\pi)$
in the sense of Gelfand: any $\vf\in\LMC$ can be written
at least in a formal way
$$\vf(x)=\int_{\pi\in\Pi} \hvf(\pi) E(x;\pi)\,d\mu(\pi)\comm
{with}\hvf(\pi)=<\vf,E(\bullet;\pi)>_\LMC$$
where $\Pi$ parametrizes a set data attached to unitary representations of $G$.
Langlands shows that these generalized vectors can be constructed via the meromorphic
continuation of the so-called Eisenstein series.
When $\GG=GL(1)$ over $\QM$,
Eisenstein series are elementary objects, namely they are characters of the form 
$$E(x;\Phi,\lambda)=|x|^\lambda\Phi(x)$$
with $\lambda\in i\RM$ and $\Phi$ a Dirichlet character. 
In such a case the spectral decomposition is nothing but an instance of
Pontryagin theory (i.e. Fourier inversion for locally compact abelian groups).
In general Eisenstein series are of the form (see below for notation)
$$E(x;\Phi,\lambda) =\sum_{\gamma\in\Gamma_P\bs\Gamma}e^{<\lambda+\rho_P,H_P(\gamma x)>}
\Phi(\gamma x)$$ where 
 $\lambda$ is a parameter in the complexification of a finite dimensional real vector space
 and $\Phi$ is an automorphic form on a parabolic subgroup. 
  These series converge in some domain but
may not converge for purely imaginary values 
of $\lambda$ we are interested in, then one needs to establish
their meromorphic continuation.

For Eisenstein series on groups of $\QM$-rank one, a proof of the meromorphic continuation 
has been obtained by Selberg (see \cite{S1} for a brief account). There are various independent approaches.
New insights were necessary to deal with the general case and
Langlands proof is quite involved. He proves rather directly, 
in chapter 6 of \cite{LES}, the meromorphic continuation and the functional equations
satisfied by intertwining operators and
Eisenstein series  when $\Phi$ is cuspidal by a reduction to the rank one case.
But in general, when $\Phi$ is only assumed to belong to the discrete spectrum of a parabolic subgroup, 
the meromorphic continuation is obtained at the same time as the
full spectral decomposition in the very difficult chapter 7.  In fact, this chapter is famous for being
{\it almost impenetrable}, as said by Langlands himself in his Preface to the Springer Lecture Notes.

\subsection{Further references}

A brief survey of \cite{LES}
is to be found  in Langlands article \cite{LB2} for the AMS Conference in Boulder in 1965.   
Another survey, due to J. Arthur \cite{A1}, was written  for the AMS Conference in Corvallis  in 1977.
 The easiest part of the proof i.e. the results of the first six
chapters of \cite{LES} may also be found in Harish-Chandra's Lecture Notes \cite{H}.
The ``Paraphrase''  by MÏglin and Waldspurger  \cite{MW2} 
gives a complete and detailed proof
in the adŽlic language for groups over arbitrary global fields
(i.e. number fields or function fields), also valid for metaplectic groups.

\subsection{Recent progress}\label{rp}

The spectral decomposition is explicitly given by Langlands in terms of two black boxes.
The first one is the cuspidal spectrum of Levi subgroups. 
The only information given is that the cuspidal spectrum is discrete with finite multiplicity.
Little progress have been made and the conjectures describing 
this spectrum in term of a dual object are still mainly out of reach.
The second black box is the residual spectrum arising from poles of Eisenstein series i.e.
the discrete but non cuspidal spectrum. Already for $GL(n)$ with
$n\ge 5$ the combinatorics of the residues is so involved that the explicit description
of the residue spectrum, conjectured by Jacquet,
 was only achieved by MÏglin and Waldspurger in \cite{MW1}.
For classical groups recent progress by MÏglin, Arthur and others
have been made, but for arbitrary reductive groups the goal is still out of reach.
A result, due to Franke \cite{Fr}, shows that all automorphic forms are finite linear combination
of Eisenstein series up to maybe taking residues and derivatives.
A different proof for the meromorphic continuation of Eisenstein series has been announced by Bernstein
a long time ago and is the subject of a recent paper \cite{BL}.

\subsection*{Acknowledgements}  I am grateful to Dinakar Ramakrishnan and to Jean-Loup Waldspurger for 
useful suggestions and criticisms.

\section{Basic objects and main theorems}

\subsection{Notation} 
 If $\PP$ is a connected linear algebraic group over $\QM$
we write $P$ for $\PP(\ade)$  where $\ade$ is the ring of adles of $\QM$,
$P_\infty$ for the real Lie group $\PP(\RM)$, $P_f$ for the group $\PP(\ade_f)$ of points over the finite adles
$\ade_f$
and $\Gamma_P$ for the group of rational points $\PP(\QM)$.
Let $\mathfrak X(P)$ be the group of rational characters of $P$ and consider
$$\ap=\Hom(\goth X(P),\RM)\ptf$$ 
This is a real vector space whose dimension is denoted $a_P$.
We denote by 
$$\H_P:P\to\ap$$  the  map
induced by $$x\mapsto \left(\chi\in\goth X(P)\mapsto\log(|\chi(x)|)\right)$$
and by $P^1$ its kernel.   Since we are dealing with a number field the map $H_P$ is surjective
(this would not be true for function fields) and there exists a section  of the surjective map $H_P$
 with values in $\AP$, the connected component of the group of real points of the
 maximal $\QM$-split torus $A_P$ in the center of a Levi subgroup $M_{P}$ in $P$:
 $$\AP=A_P(\RM)^0\subset M_P\ptf$$
  In other words $H_P$ induces an isomorphism
 $$\AP\to\ap\ptf$$
The evaluation of a linear form $\lambda\in\ap^*\otimes\CM$ on a vector $H\in\ap$
will be denoted $<\lambda,H>$.

\subsection{Parabolic subgroups and Iwasawa decomposition}

Now consider  a connected reductive group $\GG$ 
defined over $\QM$. According to our convention
we put $\G=\GG(\ade)$ and $\Gamma_G=\GG(\QM)$ or even simply $\Gamma$ if no confusion may arise. 
As in the classical setting $\Gamma$
is a discrete subgroup in $\G$ and the quotient $\Gamma\bs\G$ is of finite volume when
$\GG$ is semisimple.

We choose a minimal rational parabolic subgroup $P_0$ in $G$
and a Levi decomposition $$P_0=M_0N_0\ptf$$
Consider a standard (rational) parabolic subgroup $P$  with Levi decomposition 
$P=M_P N_P$ where  $M_0\subset M_P$.  
We denote by $\Delta_P$ the set of non zero elements in the projection on
$(\ap/\ag)^*$ of the set $\Delta_{P_0}$ of
simple roots.  We denote by $\rho$ the half sum of positive roots, by
$\rho_P$ its projection on $(\ap/\ag)^*$. We shall, from time to time, use
the bilinear scalar product on $(\ap/\ag)^*\otimes\CM$ deduced from the Killing form
and we shall denote by $<\lambda,\mu>$ its value on the couple $(\lambda,\mu)$.
This notation implicitly implies that we identify $\ap/\ag$ with its dual via this bilinear form.

We choose  once for all a ``good''
maximal compact subgroup $K$ of $\G$ i.e. of the form
$K=K_\infty\times\prod_p K_p$ where $K_\infty$ is a maximal compact subgroup in $G_\infty$,
 $K_p$ is a special in $G_p=\GG(\QM_p)$
for  all prime $p$ and hyperspecial for almost all  $p$ (see \cite{Tits} for these notions). For example;
when $\GG=GL(n)$ we take $K_p=GL(n,\ZM_p)$.
Then we have 
the Iwasawa decomposition
$$\G=PK=N_P M_P K\ptf$$ 
The homorphism $H_P:P\to\ap$ is trivial on $N_P$ and  on $K\cap P$. It
extends to a function from $\G$ onto $\ap$,
again denoted $H_P$, satisfying
 $$ H_P(pk)=H_P(p)\quad \hbox{for}\quad p\in P\quad \hbox{and}\quad k\in K\ptf$$
 This allows to view functions on $\ap$ as functions on $\G$.
The Weyl chamber in $\mathfrak a_{P_0}/\ag$ is the cone
defined by the inequalities 
$$<\alpha,H> >0\comm{for}\alpha\in \Delta_{P_0}\ptf$$
Fundamental weights
define another cone whose characteristic function is denoted $\hat\tau_{P_0}$. More generally
one defines in a similar way characteristic functions   $\hat\tau_P$ of cones in $\ap/\ag$
for any parabolic subgroup using   $\check\Delta_P$ the  basis dual to the basis of co-roots
(see \cite{A1} or \cite{LW}).  They will appear below in the definition of the truncation operator.

 \subsection{The right regular representation}
 
In the following we shall only consider functions invariant under $\AG$. Endowed with a $G$-invariant measure, the quotient
$$X_G= \AG\Gamma\bs \G\simeq \Gamma\bs \G^1$$ is of finite volume \cite{B1}.
Let $\LMC$ be the Hilbert space of square integrable functions on $X_G$:
$$\LMC=L^2(X_G)\ptf$$ 
We want to understand the spectral decomposition $\LMC$ under
of the right regular representation 
$R$ of $\G$ in $\LMC$ i.e. the map  $\G\times\LMC\to\LMC$ defined by
$$(x,\vf)\mapsto R(x)\vf\quad \hbox{where}\quad(R(x)\vf)(y)=\vf(yx)\quad \hbox{for}\quad x\in\G,
\quad y\in X_G
\quad \hbox{and}\quad \vf\in\LMC$$
where, by abuse of notation, we use the same letter for an element of $G$ and its image in $X_G$.
Consider now a smooth compactly supported
function $f$ on $G$. An operator $R(f)$ is
defined by integration of $f$ against the right regular representation:
$$(R(f)\vf)(y)=\int_{G} f(x)\vf(yx)\,dx\ptf$$
Here $dx$ is some Haar measure on $G$.  The spectral decomposition of such operators, which is
intimately related to the spectral decomposition of $\LMC$,
is a tool and a goal of automorphic theory.
The main concern for the spectral decomposition is to understand the discrete
spectrum $$\LMCD:=L^2_{\mathrm{disc}}(X_G)$$
which is the Hilbert direct sum of irreducible subspaces in $\LMC$.
The trivial representation $\mathbf 1_G$ is an obvious but already quite
interesting constituent. The way it appears in the spectral decomposition
via residues of Eisenstein series leads to the proof of Weil's conjecture
for Tamagawa numbers (see \ref{triv}).

\subsection{Constant terms and truncation operators} Two operations
 play an essential role in the theory of Eisenstein series and of trace formula as well.
The simplest one is the computation of the constant term
along a parabolic subgroup $P$: $\vf\mapsto\vf_P$. The second one is the truncation 
operator $\vf\mapsto\trunc\vf$.
 The definition of the truncation operator in general is due to J. Arthur \cite{A2}.
Its properties, recalled below, rely on reduction theory and combinatorial arguments already present in \cite{LB2}.
In fact, generalizing an operation used in Selberg's approach for $\QM$-rank one groups
in \cite{S1}, Langlands  constructs in sections  8 and 9 of  \cite{LB2},
variants of Eisenstein series attached to cusp forms,
 denoted $E''(\bullet,\Phi,\lambda)$, that are nothing but  $\trunc E(\bullet,\Phi,\lambda)$
and he computes the scalar product of two such functions. He thus gets a formula,
recalled in \ref{TrSc},
 which is  an explicit and rather simple expression in term of intertwining operators. This
 is a key tool for the analytic continuation of Eisenstein series:
it plays a role similar to the  Maass-Selberg relations used by Harish-Chandra's in Chapter IV of \cite{H}.
Although not fully recognized, this implicitly uses the full power of  truncation operators
and  $(G,M)$-families\footnote{This is a combinatorial technique implicit in Langlands' papers 
and used extensively by Arthur in his work on the Trace Formula. See section 1.10 in \cite{LW}
for a synthetic account.}, 
two techniques that play an essential role in establishing the Trace Formula,
a fact of which Langlands was already aware in the early sixties: 
he says it explicitly in section 7 p.~243 of \cite{LB2}.

These operations are elementary when dealing with modular forms. Consider
a modular form $f$ of even weight $k$
for $GL(2,\ZM)$ whose $q$-expansion in the upper half plane $\mathfrak H$
is of the form
$$f(z)=\sum_0^\infty a_nq^n\comm{with}q=e^{2\pi iz}\ptf$$ 
Its constant term is  $a_0$ and its truncated avatar is the function 
 in the fundamental domain:
 $$\domfond =\{z\in\CM\,\,|\,\,\,|\Re(z)|\le1/2\com{and} |z|\ge 1\}$$
 equal to
$$f(z)\com{if $\Im(z)\le e^T$}\!\!\!\!\!\!\!\!\com{and} f(z)-a_0
\com{if} \Im(z)\!> e^T\com{for}z\in\mathfrak D
$$ 
when $T$ is a positive real number. The modular form $f$ corresponds to an
automorphic form $\vf$ on $$Y=
GL(2,\QM)\bs GL(2,\ade)/GL(2,\overline\ZM)\simeq
GL(2,\ZM)\bs GL(2,\RM)
$$ defined by
$$\vf(g)=J(g,i)^kf(g.i)
\com{for}g=  \begin{pmatrix}a&b\cr c&d\cr\end{pmatrix}
\in GL(2,\RM)
$$
with
$$
g.i=\frac{ai+b}{ci+d}=z\in \CM-\RM
\com{and} J(g,\tau)^k=(\!\det(g))^{k/2}(c\tau+d)^{-k}
\ptf$$
The constant term $\vf_P$ is given by $J(g,i)^ka_0$ and $\Lambda^T\vf$ is the function 
on $Y$ such that
$$\Lambda^T\vf(g)=\vf(g)\com{if} \!\!\!\Im(g.i)\le e^T \!\!\!\com{and} 
\Lambda^T\vf(g)=(f(g.i)-a_0)J(g,i)^k\com{if} \!\!\!\Im(g.i)> e^T$$  
 when $T$ is a positive real number and  $g.i=z\in\domfond$.

Now return to the general case.  Taking the constant term 
along a parabolic subgroup $P$ is the operation which transforms
a locally integrable function $\vf$  on $X_G$ into a function  $\vf_P$  on
$$X_{P,G}=\AG\Gamma_P N_P\bs G$$
defined by the integral
$$\vf_P(x)=\int_{\Gamma_P\cap{N_P}\bs\N_P}\vf(nx)\,dn$$
where $dn$ gives measure 1 to $\Gamma_P\cap{N_P}\bs\N_P$.
The following formal computation plays a fundamental role: 
\begin{lemma}\label{fc} 
Let  $\vf$ be a function on $X_G$ and
$\phi$ a function on $X_{P,G}$.  Denote by $E_\phi$
the function on $X_G$ defined by
$$E_\phi(x)=\sum_{\Gamma_P\bs\Gamma_G}\phi(\gamma x)$$
provided the sum is convergent.
Then, if integrals are convergent,
$$<E_\phi,\vf>_{X_\G}=\int_{X_G}E_\phi(x)\overline
{\vf(x)}\,dx=\int_{\AG\Gamma_P\bs \G}\phi(x)\overline
{\vf(x)}\,dx =\int_{X_{P,G}}\phi(x)\overline
{\vf_P(x)}\,dx \ptf$$
\end{lemma}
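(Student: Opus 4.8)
The plan is to unfold the definitions and perform an "unfolding" computation: the sum over cosets $\Gamma_P\bs\Gamma_G$ defining $E_\phi$ telescopes with the integration over $X_G$ to produce an integration over the larger quotient $\AG\Gamma_P\bs\G$, and then the nilpotent integration in the constant term collapses the $N_P$-direction onto $\overline{\vf}$. Concretely, I would establish the two displayed equalities as two separate unfolding steps.

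First I would prove the middle equality
$$\int_{X_G}E_\phi(x)\overline{\vf(x)}\,dx=\int_{\AG\Gamma_P\bs\G}\phi(x)\overline{\vf(x)}\,dx\ptf$$
Writing out $E_\phi(x)=\sum_{\gamma\in\Gamma_P\bs\Gamma_G}\phi(\gamma x)$ and using that $\vf$ is left $\Gamma_G$-invariant (so $\overline{\vf(x)}=\overline{\vf(\gamma x)}$ for $\gamma\in\Gamma_G$), the integrand becomes a sum over $\Gamma_P\bs\Gamma_G$ of $\phi(\gamma x)\overline{\vf(\gamma x)}$. Since $X_G=\AG\Gamma_G\bs\G$, the standard unfolding lemma for integrating a $\Gamma_G$-sum against a $\Gamma_G$-invariant weight reassembles this into a single integral of $\phi(x)\overline{\vf(x)}$ over the intermediate quotient $\AG\Gamma_P\bs\G$, the point being that the tower $\Gamma_P\subset\Gamma_G$ corresponds to the projection $\AG\Gamma_P\bs\G\to\AG\Gamma_G\bs\G$ with fiber $\Gamma_P\bs\Gamma_G$. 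The convergence hypotheses in the statement are exactly what is needed to justify interchanging sum and integral (Fubini/Tonelli).

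Next I would prove the right-hand equality
$$\int_{\AG\Gamma_P\bs\G}\phi(x)\overline{\vf(x)}\,dx=\int_{X_{P,G}}\phi(x)\overline{\vf_P(x)}\,dx\ptf$$
Here I use that $\phi$ is a function on $X_{P,G}=\AG\Gamma_P N_P\bs\G$, hence left-invariant under $N_P$ (and under $\Gamma_P\cap N_P$). I would factor the integration over $\AG\Gamma_P\bs\G$ through the further quotient by $N_P$, writing the interior fiber integral as one over $\Gamma_P\cap N_P\bs N_P$. Because $\phi(nx)=\phi(x)$ for $n\in N_P$, the factor $\phi$ pulls out of the inner integral, and what remains is $\int_{\Gamma_P\cap N_P\bs N_P}\overline{\vf(nx)}\,dn=\overline{\vf_P(x)}$ by the very definition of the constant term. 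This leaves precisely $\int_{X_{P,G}}\phi(x)\overline{\vf_P(x)}\,dx$. The left-most equality is just the definition of the inner product on $\LMC$.

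The main obstacle is \emph{not} the algebraic unfolding, which is formal, but the justification of the measure-theoretic steps — the compatibility of the Haar measures under the successive quotients $\Gamma_P\cap N_P\bs N_P$, $\AG\Gamma_P N_P\bs\G$, and $\AG\Gamma_G\bs\G$, and the legitimacy of interchanging the coset sum with the integral. In line with the paper's stated intent to be "rather sloppy on analytic questions," I would invoke the normalization that $dn$ gives measure $1$ to $\Gamma_P\cap N_P\bs N_P$ (as fixed in the definition of the constant term) and treat the whole argument as the advertised \emph{formal} computation, assuming throughout that all sums and integrals converge as hypothesized so that Fubini applies. The substance of the lemma is thus the identification of the three regions of integration, which the definitions make transparent once the invariance properties of $\vf$ and $\phi$ are used.
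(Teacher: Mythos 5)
Your unfolding argument is correct and is exactly the standard computation the paper has in mind: the paper states Lemma \ref{fc} as a ``formal computation'' without proof, and your two-step unfolding (first collapsing the coset sum against the $\Gamma_G$-invariant $\vf$ to pass to $\AG\Gamma_P\bs\G$, then integrating over the fiber $(\Gamma_P\cap N_P)\bs N_P$ of measure $1$ to produce $\vf_P$) is the intended justification. Nothing is missing.
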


The truncated function $\trunc\vf$ is an alternate sum 
 indexed by standard parabolic subgroups of series over $\Gamma_P\bs\Gamma_G$
 of terms that are products of characteristic functions $\hat\tau_P$
of  cones in $\ap$ translated by some parameter $T\in\mathfrak a_{P_0}$ 
(or rather its projection on $\ap/\ag$) times constant terms $\vf_P$:
$$\trunc\vf(x)=\sum_{P\supset P_0}(-1)^{a_P-a_G}
\sum_{\gamma\in\Gamma_P\bs\Gamma_G}
\hat\tau_P(H_P(\gamma x)-T)\vf_P(\gamma x)\ptf$$
The series are trivially convergent since reduction theory shows that 
given a compact set $\Omega$, then for  $x\in\Omega$
there is only a finite number of $\gamma\in\Gamma_P\bs\Gamma_G$ such that the expression
$\hat\tau_P(H_P(\gamma x)-T)$ does not vanish and
moreover it will vanish identically if $P$ is a proper subgroup and if $T$ is far enough 
from the walls of the Weyl chamber.

\begin{proposition}\label{ident}
\pni (i)
 Given a compact set $\Omega\subset\G$ then,  provided $T$ is far enough 
from the walls of the Weyl chamber,
 one has for any locally integrable function  $\vf$:
$$\trunc\vf(x)=\vf(x)\comm{for all}x\in\Omega \ptf$$
\pni(ii)  $\trunc$ induces a self adjoint idempotent operator on the Hilbert space $\LMC$
(i.e.  an orthogonal projector) 
$$\trunc=(\trunc)^*=(\trunc)^2$$
\pni(iii)  $\trunc$ transforms functions of uniform moderate growth into rapidly decreasing functions.
In particular, given a smooth compactly supported function
$f$ on $\G$  the compositum $$\trunc\circ R(f)$$ is an operator of Hilbert-Schmidt type.
\end{proposition}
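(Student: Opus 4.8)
The plan is to take the three assertions in turn, isolating a single constant-term computation that drives both the idempotency in (ii) and the decay in (iii). Throughout I work formally, the convergence of every sum and integral being guaranteed by the reduction-theoretic remark preceding the statement. For (i) I would simply isolate the term $P=G$ in the defining alternating sum. Since $N_G$ is trivial one has $\vf_G=\vf$, the coset space $\Gamma_G\bs\Gamma_G$ reduces to a point, $\hat\tau_G\equiv1$ and the sign $(-1)^{a_G-a_G}$ equals $1$, so this term contributes exactly $\vf(x)$. It then remains to see that every term attached to a proper $P$ vanishes on $\Omega$, which is precisely the content of that remark: for $x\in\Omega$ only finitely many $\gamma\in\Gamma_P\bs\Gamma_G$ make $\hat\tau_P(H_P(\gamma x)-T)$ nonzero, and for $T$ far enough from the walls each such factor already vanishes. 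Hence $\trunc\vf=\vf$ on $\Omega$.

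For the self-adjointness in (ii) I would rewrite $\trunc\vf=\sum_{P\supset P_0}(-1)^{a_P-a_G}E_{\phi_P}$ with $\phi_P(x)=\hat\tau_P(H_P(x)-T)\vf_P(x)$, a genuine function on $X_{P,G}$ because $H_P$ is left invariant under $N_P$ and under $\Gamma_P$ (rational characters being trivial on $\Gamma_P$ by the product formula). Pairing against $\psi$ and applying Lemma~\ref{fc} termwise gives
$$<\trunc\vf,\psi>\;=\;\sum_{P\supset P_0}(-1)^{a_P-a_G}\int_{X_{P,G}}\hat\tau_P(H_P(x)-T)\,\vf_P(x)\,\overline{\psi_P(x)}\,dx\ptf$$
The right-hand side is unchanged, up to complex conjugation, under the exchange of $\vf$ and $\psi$, so $<\trunc\vf,\psi>=<\vf,\trunc\psi>$ and $\trunc$ is symmetric.

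For the idempotency I would apply this same identity with $\trunc\vf$ in place of $\vf$:
$$<(\trunc)^2\vf,\psi>\;=\;\sum_{P\supset P_0}(-1)^{a_P-a_G}\int_{X_{P,G}}\hat\tau_P(H_P(x)-T)\,(\trunc\vf)_P(x)\,\overline{\psi_P(x)}\,dx\ptf$$
The whole point is then the vanishing, for every proper $P$ and $T$ regular enough, of the constant term of the truncation on the cuspidal cone:
$$\hat\tau_P(H_P(x)-T)\,(\trunc\vf)_P(x)\equiv0\ptf$$
Granting this, only the term $P=G$ survives, where $(\trunc\vf)_G=\trunc\vf$ and $\hat\tau_G\equiv1$, so the sum collapses to $<\trunc\vf,\psi>$; since $\psi$ is arbitrary this yields $(\trunc)^2=\trunc$. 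One already checks the vanishing by hand on $GL(2)$, where deep in the cusp $(\trunc\vf)_{P_0}=\vf_{P_0}-(\vf_{P_0})_{P_0}=0$.

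The main obstacle is therefore the computation of $(\trunc\vf)_P$ and the proof of its vanishing on the relevant cone. Here I would take the constant term along $P$ of the alternating sum defining $\trunc$, interchange the constant-term integral with the sums over $\gamma$, and reorganize the double sum over parabolics according to their position relative to $P$; the collapse of the resulting inner alternating sums is controlled by Langlands' combinatorial lemma for the functions $\hat\tau$, i.e. an inclusion–exclusion over the lattice of parabolics (see \cite{A1} or \cite{LW}). Read as an estimate rather than an identity, the same computation gives (iii): away from a compact core reduction theory places $x$ in a Siegel set attached to some $P$, where the dominant parts of $\vf$ and of its constant terms cancel and the remainder is dominated by the difference $\vf-\vf_P$, which is rapidly decreasing in the cuspidal direction selected by $\hat\tau_P$; summing over the finitely many relevant $P$ gives uniform rapid decay. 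Finally, for the Hilbert–Schmidt statement I would write the kernel of $R(f)$ as $K_f(x,y)=\sum_{\gamma\in\Gamma}f(x^{-1}\gamma y)$, so that $\trunc\circ R(f)$ has kernel $\trunc_x K_f(x,y)$ (truncation in the first variable). For compactly supported $f$ the function $K_f(\cdot,y)$ is of uniform moderate growth and, by (iii), its truncation is rapidly decreasing in $x$ uniformly for $y$ in a Siegel set, while the support of $f$ confines $y$ to a bounded region relative to $x$; since $X_G$ has finite volume this makes $\trunc_x K_f$ square integrable on $X_G\times X_G$, whence $\trunc\circ R(f)$ is of Hilbert–Schmidt type.
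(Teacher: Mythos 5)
Your proof of (i) is exactly the paper's: the $P=G$ term of the alternating sum is $\vf(x)$, and the terms for proper $P$ vanish on $\Omega$ for $T$ sufficiently regular by the reduction-theoretic remark preceding the statement. For (ii) and (iii) the paper offers no argument at all — it simply refers to Chapters 4 and 5 of \cite{LW} — and your sketch is the standard route taken there and in \cite{A2}: self-adjointness via Lemma \ref{fc} and the symmetry of $\sum_P(-1)^{a_P-a_G}\int_{X_{P,G}}\hat\tau_P(H_P(x)-T)\vf_P(x)\overline{\psi_P(x)}\,dx$ in $\vf$ and $\psi$; idempotency from the vanishing of $\hat\tau_P(H_P(x)-T)(\trunc\vf)_P(x)$ for every proper $P$; rapid decay from the estimate on $\vf-\vf_P$ on Siegel sets; and the Hilbert--Schmidt claim from square-integrability of the truncated kernel $\trunc_x K_f(x,y)$. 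So the approach is the right one, and correctly identifies the two genuinely hard inputs. Be aware, though, that those two inputs — the vanishing of the truncated constant term on the cone where $\hat\tau_P(H_P(x)-T)=1$ (Arthur's key lemma, proved via Langlands' combinatorial lemma), and the uniform rapid-decay estimate — are precisely where all the work of \cite{LW} Chapters 4--5 is concentrated; in your write-up they are invoked rather than proved, so the proposal is an accurate road map rather than a complete proof. A minor further point: the termwise application of Lemma \ref{fc} and the interchange of sums and integrals need the integrability hypotheses of that lemma, so the identities should first be established on a suitable dense class of functions (e.g.\ continuous functions of moderate growth) before being extended to all of $\LMC$.
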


\begin{proof}
Assertion (i) follows immediately from the above remarks.  For a proof of assertion (ii) and (iii) we refer to 
refer  to Chapters 4 and 5 of \cite{LW}\footnote{%
The reader should be warned that the proofs given in \cite{A2} have to be slightly
corrected when dealing with arbitrary reductive groups since some arguments
may not apply for non split groups.}.
\end{proof}

\subsection{Automorphic forms}

Automorphic forms are functions $\vf$ on $X_G$ that are smooth, $K$-finite, of uniform moderate growth
and annihilated by an ideal $\mathfrak I$ of finite codimension in the center $\goth z(\goth g_\infty)$
of the enveloping algebra of the Lie algebra $\goth g_\infty$ of $G_\infty$:
$$\vf\star\goth c=0\comm{for all}\goth c\in\mathfrak I\ptf$$
Here $\star$ denotes the convolution product and $\goth c$ is viewed as a distribution supported
 at the origin in $G$.
  The $K$-finiteness may be expressed by asking
that $$\vf\star e=\vf$$ where $e$ is an idempotent defined by a measure supported
on $K$ associated to a finite dimensional representations of $K$.
For a more detailed definition we refer to  \cite{B2}, \cite{BJ}, \cite{H} or \cite{MW2}.
An important property is that the space of automorphic forms $\vf$ 
annihilated by any given ideal $\mathfrak I$
and such that $\vf\star e=\vf$  for any given $e$, is a finite dimensional vector space.
This implies (see \cite{H} for example) that given an automorphic form $\vf$ there exists
a smooth $K$-finite function $f$ compactly supported on $X_G$ such that 
$$\vf\star f=\vf\ptf$$
Since automorphic forms are of uniform moderate growth
the truncation operator transforms any automorphic form $\vf$ into a rapidly decreasing 
function $\trunc\vf$ and in particular one has $\trunc\vf\in\LMC$.

\subsection{Cusp forms}

An automorphic form $\vf$ is said to be cuspidal
(or a cusp form) if for any  parabolic subgroup $P\ne\G$ 
the constant terms $\vf_P$ vanish identically. 
It suffices to check this for standard parabolic subgroups.
A key observation is that
the truncation operator acts by the identity on a cusp form $\vf$:
$$\trunc\vf=\vf\ptf$$
 Then, assertion (iii) of \ref{ident} implies that cusp forms are rapidly decreasing on $X_G$ 
 and in particular are square-integrable.
They generate in $\LMC$ the cuspidal spectrum $$\LMCC:=L^2_{\mathrm{cusp}}(X_G)\ptf$$ 

Consider a smooth compactly supported
function $f$ on $G$. The  restriction of $R(f)$  to $\LMCC$ 
is a trace class operator; this follows from three observations: 
\pni (i)  the factorization theorem of Dixmier-Malliavin
 tells us that $f$ can be written as a finite sum of convolution products:
$$f=f_1\star f_2+\cdots +f_{2r-1}\star f_{2r}$$
for some integer $r$,
\pni (ii)  $\trunc R(f_i)\,R(f_j)\trunc$ being a product of two Hilbert-Schmidt operators is of trace class,
\pni (iii)  $\trunc R(f)\trunc\vf=R(f)\vf$\,\,\,
when $\vf$ is cuspidal.

This implies that the cuspidal spectrum
$\LMCC$ decomposes as a discrete sum with finite multiplicities 
of irreducible representation and hence is a subspace of the discrete spectrum
$\LMCD$.
This result was first obtained by Gelfand and Piatetskii-Shapiro.
Unless $X_G$ is compact,  $\LMCC$ 
is only a strict subspace of $\LMCD$: in fact, whenever $X_G$ is not
compact, the trivial representation is discrete but not cuspidal.

\subsection{Automorphic forms  on parabolic subgroups}\label{cuspapra}

Consider a standard parabolic subgroup $P$ and the quotient space 
$$X_P=\AP\Gamma_P N_P\bs G\ptf$$
 Let us denote by 
 $$\LMCP=L^2(X_P)
 $$
 the Hilbert space generated by functions that are square 
integrable on $X_P$ for the right $G$-invariant measure:
$$<\Phi,\Phi>_\LMCP=\int_K\int_{\Gamma_P N_P\bs P^1} 
\Phi(pk)\overline{\Phi(pk)}\,dp\,dk\ptf$$
The Haar measure $dk$ is normalized so that $\vol(K)=1$ and $dp$ is a
 right $P^1$-invariant measure.
The space $X_P$  is of finite volume. Observe that $\LMC_G=\LMC$.
 
 We say that a function $\Phi$ on  $X_{P}$ is automorphic on $\P$
if it is $K$-finite and if, for all $x\in\G$, the functions 
$m\mapsto\Phi(mx)$ for $m\in M_P$ are automorphic forms on the Levi subgroup of $P$.
We say that an automorphic form $\Phi$  is cuspidal on  $\P$  if, moreover, the functions 
$m\mapsto\Phi(mx)$  are cusp forms.

\subsection{Representations $I_{P,\lambda}$ and intertwining operators}

Given $\Phi$ on $X_P$ and $\lambda\in(\ap/\ag)^*\otimes\CM$ we introduce
$$\Phi_\lambda(x)=e^{<\lambda+\rho_P,H_P( x)>}\Phi(x)$$
and define a representation $I_{P,\lambda}$ by
$$(I_{P,\lambda}(y)\Phi)(x)=e^{<\lambda+\rho_P,H_P(xy)-H_P( x)>}\Phi(xy)$$
or equivalently 
$$(I_{P,\lambda}(y)\Phi)_\lambda(x)=\Phi_\lambda(xy)\ptf$$

 Given two standard parabolic subgroups $\PP$ and $\QQ$
we denote by $\weyl(\ap,\aq)$  the set of elements of minimal length in the Weyl group $\weyl^G$ 
of $\GG$ such that $s(\ap)=\aq$. Recall that two standard parabolic subgroups 
$P$ and $Q$ are said to be associated
if $\weyl(\ap,\aq)$ is non empty. We denote by  $\weyl(\ap,Q)$  
the set of elements of minimal length
such that $s(\ap)\supset\aq$.
Let $w$ denote an element in $\Gamma$ representing $s\in\weyl(\ap,\aq)$
and consider the function $\Psi$ on $X_Q$ defined by the integral, when convergent:
$$\Psi(x)=e^{-<s\lambda+\rho_Q,H_Q( x)>}\int_{N_w} 
e^{<\lambda+\rho_P,H_P(w\mun nx)>}\Phi(w\mun nx)\,dn$$
where $N_w=wN_P w\mun\cap N_Q\bs N_Q$ or, equivalently
$$\Psi_{s\lambda}(x)=\int_{N_w} 
\Phi_\lambda(w\mun nx)\,dn$$
One defines an 
operator $\Mint(s,\lambda)$ 
which intertwines  $I_{P,\lambda}$ and $I_{Q,s\lambda}$ by putting
$$\Psi=\Mint(s,\lambda)\Phi\ptf$$
These operators are products  of local analogues:
$$\Mint(s,\lambda)=\Mint_\infty(s,\lambda)\times\prod_p\Mint_p(s,\lambda)$$
where the product is over prime numbers.

If $\GG=GL(2)$ and $\Phi(x)\equiv1$,
using this product decomposition
 it is easy to compute $\Mint(s,\lambda)\Phi$
when $s$ is the non trivial element in the Weyl group: if
$\lambda=\sigma \rho=\frac{1}{2}\sigma \alpha$ where $\alpha$ the positive root
 and $\sigma\in\CM$ with $\Re(\sigma)>1$
one finds
$$\Mint(s,\lambda)\Phi=m(s,\lambda)\Phi\com{with}
m(s,\lambda)=m_\infty(s,\lambda)\prod_p m_p(s,\lambda)
=\frac{L(\sigma)}{L(1+\sigma)}$$
where $L$ is the complete Riemann Zeta function
$$\Zeta(\sigma)=\Zeta_\infty(\sigma)\prod_p\Zeta(\sigma)
\com{with}
\Zeta_\infty(\sigma)=\pi^{-\sigma/2}\Gamma(\sigma/2)\com{and}\Zeta_p(\sigma)=\frac{1}{1-p^{-\sigma}}\,\,.$$
Similar explicit formulas apply for local intertwining operators acting 
on automorphic forms that are right-invariant under $K_p$ when $\G_p$ is quasi-split and $K_p$
is hyperspecial. For example, according to \cite{Lai} Proposition 3.2, if $\G_p$ is the quasi-split form of $SU(3)$, 
the special unitary group in 3 variables
attached to the unramified quadratic extension of $\QM_p$ at a prime $p\ne2$,
the local factor  $m_p(s,\lambda)$ is
$$m_p(s,\lambda)=\frac{(1-p^{-2(\sigma+1)})(1+p^{-2\sigma-1})}{(1-p^{-2\sigma})(1+p^{-2\sigma})}$$
when $\lambda=\sigma\rho$. For more examples  but in arbitrary rank
see \ref{triv} and \ref{gln} below and \cite{Lai}.

\subsection{Eisenstein Series}

Series of the form
$$E_k(z,s)=\sum_{(c,d)}\frac{\Im(z)^{s-k/2}}{(cz+d)^k|cz+d|^{2s-k}}$$
with $z\in\mathfrak H$  (the upper half plane) and $\Re(s)$ large enough,
 were studied by Hecke, Maa{ss} and Selberg. When $2s=k$ they
  appear in the theory of elliptic modular forms while, when $k=0$, these series
 (or rather their analytic continuation)  
were used by Selberg to  describe the spectral decomposition of $L^2(SL(2,\ZM)\backslash \mathfrak H)$.
 In what follows we shall deal with generalizations of these ones.

Consider $\lambda\in\ap^*\otimes\CM$ and a function $\Phi$ on $X_P$.
One  defines, when convergent, an Eisenstein series by
$$E(x;\Phi,\lambda)=
\sum_{\gamma\in\Gamma_P\bs\Gamma}\Phi_\lambda(\gamma x)
 =\sum_{\gamma\in\Gamma_P\bs\Gamma}
e^{<\lambda+\rho_P,H_P(\gamma x)>}\Phi(\gamma x)\ptf$$
This is a function on $X_G$.  The following lemma is an immediate consequence of \ref{fc}.

\begin{lemma}\label{efc} Eisenstein
series are orthogonal to cusp forms.
\end{lemma}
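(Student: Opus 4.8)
The plan is to deduce Lemma~\ref{efc} directly from the formal computation in Lemma~\ref{fc}. The Eisenstein series $E(x;\Phi,\lambda)$ is by definition of the form $E_\phi$ appearing in \ref{fc}, namely $E_\phi(x)=\sum_{\Gamma_P\bs\Gamma_G}\phi(\gamma x)$ with $\phi=\Phi_\lambda$, the latter being a function on $X_{P,G}=\AG\Gamma_P N_P\bs G$ since $\Phi$ is $\AP\Gamma_P N_P$-invariant and the exponential factor $e^{<\lambda+\rho_P,H_P(x)>}$ is left $N_P$-invariant (because $H_P$ is trivial on $N_P$). So the machinery of \ref{fc} applies verbatim.

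First I would take an arbitrary cusp form $\vf\in\LMCC$ and form the Hermitian pairing $<E(\bullet;\Phi,\lambda),\vf>_{X_G}$, working in a range of $\lambda$ where the Eisenstein series converges so that the manipulations are legitimate. Applying Lemma~\ref{fc} with $\phi=\Phi_\lambda$, the pairing unfolds to
$$<E(\bullet;\Phi,\lambda),\vf>_{X_G}=\int_{X_{P,G}}\Phi_\lambda(x)\overline{\vf_P(x)}\,dx\ptf$$
The decisive observation is that $P\ne\G$: an Eisenstein series attached to a proper standard parabolic is a genuine (non-constant) sum, and the constant term $\vf_P$ that appears is the constant term along this \emph{proper} parabolic subgroup $P$. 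Since $\vf$ is cuspidal, by the very definition of a cusp form its constant term $\vf_P$ vanishes identically for every proper parabolic $P$. Hence the integrand is identically zero and the pairing vanishes, which is exactly the assertion of the lemma.

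The main point requiring care is the justification of the formal computation, i.e.\ the convergence hypotheses under which Lemma~\ref{fc} is applied. The unfolding of $\sum_{\Gamma_P\bs\Gamma_G}\phi(\gamma x)$ against $\overline{\vf(x)}$ and the interchange of summation and integration need the relevant integrals to converge absolutely. This is where cuspidality is doubly useful: cusp forms are rapidly decreasing on $X_G$ (by the remark following \ref{ident} that $\trunc\vf=\vf$ together with assertion (iii) of \ref{ident}), so $\vf$ decays fast enough to dominate the growth of $\Phi_\lambda$ and of the Eisenstein series in its domain of convergence. I would therefore restrict attention to $\lambda$ in the region of absolute convergence of $E(x;\Phi,\lambda)$, invoke Fubini to legitimize the unfolding, and conclude. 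The orthogonality for general $\lambda$ then follows by meromorphic continuation, but for the bare statement of the lemma the convergent range suffices.
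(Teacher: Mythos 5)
Your proof is correct and is essentially the paper's own argument: the paper derives Lemma~\ref{efc} as an immediate consequence of the unfolding identity of Lemma~\ref{fc}, which reduces the pairing to an integral against the constant term $\vf_P$ of the cusp form along the proper parabolic $P$, and this vanishes by cuspidality. Your additional remarks on convergence (restricting to the domain of absolute convergence of the series and using the rapid decay of cusp forms) only make explicit what the paper leaves implicit.
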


In fact  Eisenstein
series  will allow to construct
the orthogonal supplement to the space of cusp forms in $\LMC$.
Eisenstein series intertwine representations $I_{P,\lambda}$ and the
right regular representations for functions on $X_G$: if $f$ is a smooth
compactly supported function on $\AG\bs G$ one has
$$E(x;I_{P,\lambda}(f)\Phi,\lambda) =E(x;\Phi,\lambda)\star f\ptf$$
When $\Phi$ is  a
cusp form on $P$, the series defining $E(x;\Phi,\lambda)$
converge for $\Re(\lambda)\in C_P+\rho_P$,
where $C_P$ denotes the projection of the Weyl chamber on $\ap^*$.
This follows from the rapid decay of cusp forms on $X_P$ and of the convergence
of a similar series but where $\Phi=1$  identically.
For such $\lambda$ and $\Phi$ 
the intertwining operator $$\Phi\mapsto \Mint(s,\lambda)\Phi$$ is also given by a convergent integral.
The interplay between Eisenstein series and intertwining operators
shows up when computing their constant terms:
\begin{lemma}\label{cstt} Consider an Eisenstein 
series defined by $\Phi$ cuspidal on $P$.
 The constant term $ E_Q(x;\Phi,\lambda)$ along any parabolic subgroup $Q$ vanishes unless there is an $s$ in the
Weyl group such that $s(\ap)\supset\aq$. In this case one has
$$ E_Q(x;\Phi,\lambda)=\sum_{s  \in \weyl(\ap,Q)}
E^Q(x;\Mint(s,\lambda)\Phi,s\lambda)$$
where $E^Q(x;\Mint(s,\lambda)\Phi,s\lambda)$ is the Eisenstein series on $Q$ defined by $\Mint(s,\lambda)\Phi$.
When $Q$ is associate to $P$ this can be written
$$ E_Q(x;\Phi,\lambda)=\sum_{s  \in \weyl(\ap,\aq)}
e^{<s(\lambda)+\rho_Q,H_Q(x)>}\Mint(s,\lambda)\Phi(x)\ptf$$
\end{lemma}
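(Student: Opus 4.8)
The plan is to compute $E_Q(x;\Phi,\lambda)$ directly from the definition of the constant term by unfolding the Eisenstein series along the rational Bruhat decomposition, and then to use the cuspidality of $\Phi$ to annihilate all but finitely many contributions. Throughout I would work with $\Re(\lambda)$ deep in the cone of convergence, where all sums and integrals converge absolutely and may be freely interchanged; the identity for general $\lambda$ then follows once the meromorphic continuation is available. First I would write
$$E_Q(x;\Phi,\lambda)=\int_{\Gamma_Q\cap N_Q\bs N_Q}E(nx;\Phi,\lambda)\,dn=\int_{\Gamma_Q\cap N_Q\bs N_Q}\sum_{\gamma\in\Gamma_P\bs\Gamma}\Phi_\lambda(\gamma nx)\,dn\ptf$$
The main structural input is the decomposition of $\Gamma_P\bs\Gamma$ according to the double cosets $\Gamma_P\bs\Gamma/\Gamma_Q$: by the Bruhat decomposition over $\QM$ these are represented by elements $w$ of minimal length, one for each class $s\in\weyl_P\bs\weyl^G/\weyl_Q$, and for a fixed representative $w$ one has the bijection $\Gamma_P\bs\Gamma_Pw\Gamma_Q\simeq(w\mun\Gamma_Pw\cap\Gamma_Q)\bs\Gamma_Q$.

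Second, for each double coset I would carry out the unfolding: combining the inner sum over $(w\mun\Gamma_Pw\cap\Gamma_Q)\bs\Gamma_Q$ with the integration over $\Gamma_Q\cap N_Q\bs N_Q$ and using the Levi decomposition of $\Gamma_Q$, the $N_Q$-integral splits into an integral over the unipotent piece $N_w=wN_Pw\mun\cap N_Q\bs N_Q$ against $\Phi_\lambda(w\mun n\,\cdot)$, times a residual summation over $\Gamma_{M_Q}$-cosets. The crucial point is to identify the part of the $N_Q$-integration that is absorbed by $\Phi$: after conjugating by $w$ it computes the constant term of the automorphic form $m\mapsto\Phi(mx)$ on the Levi $M_P$ along the parabolic subgroup $w\mun Qw\cap M_P$ of $M_P$. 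When $s(\ap)\not\supset\aq$ this parabolic is proper in $M_P$, so the inner integral vanishes identically by cuspidality of $\Phi$; this is exactly the assertion that $E_Q$ vanishes unless some $s$ satisfies $s(\ap)\supset\aq$, and it reduces the sum to the representatives lying in $\weyl(\ap,Q)$.

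Finally, for $s\in\weyl(\ap,Q)$ the surviving inner integral $\int_{N_w}\Phi_\lambda(w\mun nx)\,dn$ is, by the very definition of the intertwining operator, $(\Mint(s,\lambda)\Phi)_{s\lambda}(x)$; reassembling it with the residual $\Gamma_{M_Q}$-summation produces precisely the Eisenstein series $E^Q(x;\Mint(s,\lambda)\Phi,s\lambda)$ on $Q$, which yields the first stated formula. In the associate case $s(\ap)=\aq$ the intermediate parabolic $w\mun Qw\cap M_P$ equals $M_P$ and $\Mint(s,\lambda)\Phi$ is already cuspidal on the full Levi $M_Q$, so the $Q$-Eisenstein series degenerates to the single term $e^{<s\lambda+\rho_Q,H_Q(x)>}\Mint(s,\lambda)\Phi(x)$, giving the second displayed formula. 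I expect the main obstacle to be the bookkeeping of this unfolding: choosing minimal-length representatives $w$ so that the successive quotients fit together, tracking how $\Gamma_Q\cap N_Q\bs N_Q$ factors through $N_w$ and the Levi $M_Q$, and matching the modular characters and the shift by $\rho_Q$ so that the recombination is exactly the $Q$-Eisenstein series rather than merely proportional to it.
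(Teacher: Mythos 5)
The paper states this lemma without giving a proof (it is quoted as a known result, established in \cite{LES} and in Proposition II.1.7 of \cite{MW2}), so there is no in-paper argument to compare against; your proposal is precisely the standard proof from those references: unfold the series along $\Gamma_P\bs\Gamma/\Gamma_Q\simeq \weyl^P\bs\weyl^G/\weyl^Q$, observe that the portion of the $N_Q$-integration absorbed by $\Phi$ computes a constant term of $m\mapsto\Phi(mx)$ along a parabolic of $M_P$ that is proper exactly when $s(\ap)\not\supset\aq$ (hence vanishes by cuspidality), and identify the surviving $N_w$-integrals with $\Mint(s,\lambda)$ as defined in the paper. The outline is correct, and the step you single out as the main obstacle --- the bookkeeping that splits the $\delta$-sum and $N_Q$-integral into the $N_w$-integral, the constant term inside $M_P$, and the residual $\Gamma_{M_Q}$-summation giving $E^Q$ --- is indeed where all the content lies and works out as you describe.
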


\subsection{Meromorphic continuation and spectral decomposition}

Before giving some hints toward the proof we state the main theorems
 of chapter 7 (reformulated in adŽlic language in appendix II of \cite{LES}).

\begin{theorem} \label{eiscont}
Assume that $\Phi$ is automorphic on $P$ and that $m\mapsto\Phi(mx)$ 
belongs to the discrete spectrum\footnote{Thanks to Franke's results \cite{Fr}
this hypothesis can be removed: automorphic is enough.}
of $M_P$ for any $x$.  Consider $\lambda\in\ap^*\otimes\CM$.
\pni(i) 
The series defining the Eisenstein series 
$E(x;\Phi,\lambda)$
and the integral defining the
 intertwining operators $M(s,\lambda)$
 are convergent when $\Re(\lambda)$ belongs to
 some translate of the Weyl chamber. 
 \pni (ii) 
 They have a meromorphic continuation on the whole space $\ap^*\otimes\CM$.
 \pni(iii) 
 The operators $M(s,\lambda)$  satisfy functional equations: for  $s\in\weyl(\ap,\aq)$ and 
 $t\in\weyl(\aq,\ar)$
$$\Mint(st,\lambda)=\Mint(s,t\lambda)\Mint(t,\lambda)\ptf$$
 \pni(iv)  Moreover
$$ \Mint(s,-\overline\lambda)^*=\Mint(s,\lambda)\mun\ptf$$
 In particular $M(s,\lambda)$  is unitary when $\lambda$ is purely imaginary.
 \pni(v) 
The automorphic forms $$x\mapsto E(x;\Phi,\lambda)$$ 
obtained  by meromorphic continuation of Eisenstein series satisfy the functional equations: 
$$E(x;\Phi,\lambda)=E(x;\Mint(s,\lambda)\Phi,s\lambda)\ptf$$
 \pni(vi)  Eisenstein series are analytic when $\lambda$ is purely imaginary.
\end{theorem}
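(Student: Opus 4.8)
The plan is to prove the cuspidal case first, where $m\mapsto\Phi(mx)$ is a cusp form on $M_P$ (this is Langlands' chapter~6), and only then to address the passage to the full discrete spectrum, which cannot be separated from the global spectral decomposition and is the genuine obstacle. Part~(i) is the cheapest: when $\Phi$ is cuspidal it decays rapidly on $X_P$, so the terms of $E(x;\Phi,\lambda)$ and of the integral defining $\Mint(s,\lambda)$ are dominated, via reduction theory, by those of the corresponding objects attached to $\Phi\equiv1$; convergence of the latter for $\Re(\lambda)-\rho_P$ deep in the Weyl chamber is a direct estimate, exactly as indicated before Lemma~\ref{efc}.

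The engine for~(ii)--(vi) is the coupling between Eisenstein series and intertwining operators through two formulas. The first is the constant-term identity of Lemma~\ref{cstt}, which expresses $E_Q(x;\Phi,\lambda)$ as a sum over $\weyl(\ap,\aq)$ of the $\Mint(s,\lambda)\Phi$; thus any analytic statement about $E$ along the ``boundary'' is a statement about the $\Mint(s,\lambda)$, and conversely. The second is the explicit inner-product formula~\ref{TrSc} for truncated Eisenstein series, the avatar here of the Maass--Selberg relations: for $\Phi,\Psi$ cuspidal on associate parabolics it writes $\langle\trunc E(\cdot;\Phi,\lambda),\trunc E(\cdot;\Psi,\mu)\rangle$, in the range of convergence, as a finite sum in which the $\Mint(s,\lambda)$ enter bilinearly and the truncation parameter $T$ appears only through explicit exponential-over-linear factors. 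Because $\trunc$ is the self-adjoint projector of Proposition~\ref{ident} and $\trunc E$ lies in $\LMC$, the left side is holomorphic wherever $E$ is; comparing it with the explicit right side is what forces meromorphy of the $\Mint(s,\lambda)$ and pins down the location of their poles.

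For~(ii) I would argue by induction on the semisimple rank $a_P-a_G$, using constant terms to reduce to rank one: along a maximal parabolic the sum in Lemma~\ref{cstt} collapses to the two terms indexed by $1$ and the nontrivial $s$, so $E_P(x;\Phi,\lambda)=\Phi_\lambda(x)+\Mint(s,\lambda)\Phi_{s\lambda}(x)$ and everything is governed by one complex variable. The rank-one continuation is the analytic heart of the cuspidal case: here $\trunc E(\cdot;\Phi,\lambda)$ is square integrable and differs from $E(\cdot;\Phi,\lambda)$ by the explicitly known constant-term contribution plus a rapidly decreasing remainder, which lets one realize $E$ through the resolvent kernel of the self-adjoint operator furnished by the Casimir; the spectral theorem then yields holomorphy off the real axis and meromorphic continuation across it with only finitely many real poles, and the constant-term functional equation propagates the result to the whole plane. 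Granting this, the remaining parts are obtained by continuing identities valid in the convergence range: (iii) by composing the constant-term formula along a chain $P\to Q\to R$ and invoking uniqueness of meromorphic continuation; (iv), hence unitarity on the imaginary axis, from the integral expression for $\Mint$ together with $\Mint(s,\lambda)\mun=\Mint(s\mun,s\lambda)$ read off from (iii); (v) because $E(\cdot;\Phi,\lambda)-E(\cdot;\Mint(s,\lambda)\Phi,s\lambda)$ then has vanishing constant term along every parabolic, so is a cusp form, while being orthogonal to all cusp forms by Lemma~\ref{efc}, hence zero; and (vi) because a pole on the imaginary axis would contradict the unitarity in~(iv), the singularities in $\Re(\lambda)>0$ being precisely the residues that create the residual spectrum.

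The main obstacle is the hypothesis in its full strength, namely $\Phi$ in the \emph{discrete} but possibly non-cuspidal spectrum of $M_P$. There is no longer a self-contained rank-one reduction, because the residual part of the discrete spectrum of $M_P$ is itself produced by residues of Eisenstein series on smaller Levi subgroups, and its very existence is established only together with the continuation. This is the simultaneous induction of chapter~7 of \cite{LES}, in which the meromorphic continuation, the functional equations, the squareness of the residues, and the global spectral decomposition are all bootstrapped at once; it is this interlocking that Langlands himself called \emph{almost impenetrable}, and I do not expect any short-cut around it.
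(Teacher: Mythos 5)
Your proposal is correct and follows essentially the same route as the paper: convergence from the rapid decay of cusp forms, the truncated inner-product formula \ref{TrSc} as the Maass--Selberg engine coupling $E$ and $\Mint$, a rank-one resolvent/spectral-theorem argument with positivity giving the continuation and the functional equation of $\Mint$ near the imaginary axis, the cuspidality-plus-orthogonality trick for (v), and the honest admission that the non-cuspidal discrete case requires the simultaneous induction of Langlands' chapter 7. The only cosmetic difference is that the paper reduces higher rank to rank one at the level of the intertwining operators, by factoring $s$ into simple reflections so that $\Mint(s,\lambda)=\Mint(s_1,s_2\lambda)\Mint(s_2,\lambda)$, and then transfers the continuation to the Eisenstein series via Theorem \ref{equa}, rather than running your induction on $a_P-a_G$ through constant terms of $E$ itself.
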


Denote by $\HMCPA$ the space of square integrable automorphic forms on $X_P$
and by $\HMCP$ its closure.
The proof of the analytic continuation of general Eisenstein series is entangled with the
proof of the following theorem.

\begin{theorem} \label{disc}
The discrete spectrum  $\LMCD$ is generated by automorphic forms that
appear as residues of 
Eisenstein series constructed from automorphic forms in the cuspidal spectrum of Levi subgroups.
In particular $\LMCD=\HMC_G$.
\end{theorem}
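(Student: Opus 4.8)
The plan is to argue by induction on the split rank $\dim(\apo/\ag)$ of $\GG$, the base case being the anisotropic one in which $X_G$ is compact and the whole spectrum is discrete. Since Theorem \ref{eiscont} is at our disposal, we may invoke the meromorphic continuation, the functional equations (iii) and (v), the unitarity (iv) and the analyticity on the unitary axis (vi) of Eisenstein series and intertwining operators. The first step is to split $\LMC$ orthogonally according to cuspidal support. Using Lemma \ref{efc} together with the adjunction formula of Lemma \ref{fc}, one checks that a function orthogonal to every pseudo-Eisenstein series (wave packet) built from cusp forms on a Levi subgroup $M_P$ has vanishing cuspidal projection of its constant term $\vf_P$; grouping parabolic subgroups into associate classes $\{P\}$ this yields an orthogonal decomposition
$$\LMC=\bigoplus_{\{P\}}\LMC_{\{P\}}$$
in which $\LMC_{\{G\}}=\LMCC$ is the cuspidal part. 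It therefore suffices to locate the discrete spectrum inside each $\LMC_{\{P\}}$ with $P\ne\G$ and to show that it is spanned by residues of Eisenstein series attached to cusp forms on $M_P$.

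Fix such a class and a cuspidal datum $\Phi$ on $M_P$. For a Paley--Wiener profile $h$ and a base point $\lambda_0$ deep in $C_P+\rho_P$, the wave packet
$$\theta_h(x)=\int_{\Re\lambda=\lambda_0}h(\lambda)\,E(x;\Phi,\lambda)\,d\lambda$$
converges and lies in $\LMC$, and by Lemma \ref{cstt} its constant terms are governed by the intertwining operators $\Mint(s,\lambda)$. The decisive analytic input is the computation of the inner product of two truncated Eisenstein series --- the Maass--Selberg type formula, which expresses $\langle\trunc E(\bullet;\Phi,\lambda),\trunc E(\bullet;\Psi,\mu)\rangle$ as an explicit finite sum over the Weyl sets $\weyl(\ap,\aq)$ of terms built from the $\Mint(s,\lambda)$. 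Combined with assertion (iii) of Proposition \ref{ident}, which makes truncated Eisenstein series square-integrable, and with the unitarity (iv) and analyticity (vi) on the axis, this formula yields the $\LMC$-norm of $\theta_h$ as an explicit expression in $h$ and the $\Mint(s,\lambda)$, exhibiting $h\mapsto\theta_h$ as an isometry of $\LMC_{\{P\}}$ onto a space of Weyl-symmetric $L^2$ families on the unitary axis $\Re(\lambda)=0$. The spectral analysis of $\LMC_{\{P\}}$ is thereby reduced to that of the operators $\Mint(s,\lambda)$ on this axis.

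To isolate the discrete part, I would shift the contour of integration in $\theta_h$ from $\Re\lambda=\lambda_0$ towards $\Re\lambda=0$. Crossing the poles of $E(x;\Phi,\lambda)$ --- which by the constant-term formula of Lemma \ref{cstt} coincide with the poles of the $\Mint(s,\lambda)$ --- produces residues supported on affine subspaces of $\ap^*\otimes\CM$. By Langlands' square-integrability criterion, a residue is square-integrable exactly when the exponents of all of its constant terms, read off from Lemma \ref{cstt} and the functional equation (v), lie strictly inside the appropriate cone; this condition selects which poles actually contribute. A square-integrable residue is an automorphic form, annihilated by an ideal of finite codimension, hence lies in $\LMCD$; conversely the isometry of the previous step forces the discrete part of $\LMC_{\{P\}}$ to be spanned by exactly these residues. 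Running over all associate classes exhausts $\LMCD$. Since $\HMC_G$ is by definition the closure of the space of square-integrable automorphic forms on $X_G$ --- namely cusp forms together with these residues --- we obtain $\LMCD=\HMC_G$.

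The genuine difficulty is that Theorem \ref{eiscont}, invoked freely above, is not in fact available independently for $\Phi$ in the merely discrete (rather than cuspidal) spectrum of $M_P$: its continuation and the present $L^2$-decomposition must be established simultaneously, by an induction on rank in which the residual spectrum of the proper Levi subgroups and the spectral decomposition of $\GG$ are bootstrapped together. This is the content of the notoriously difficult Chapter 7 of \cite{LES}. Two points concentrate the difficulty: proving the square-integrability of the residues, which rests on a delicate bookkeeping of constant-term exponents and on the combinatorics of iterated residues (already formidable for $GL(n)$ with $n\ge5$), and proving completeness, namely that nothing in $\LMCD$ escapes these residues, which requires the full strength of the functional equations together with the inner-product formula.
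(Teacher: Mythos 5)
Your proposal follows essentially the same route as the paper's own sketch: the orthogonal decomposition by associate classes of Proposition \ref{amc}, the scalar product of pseudo-Eisenstein (and truncated Eisenstein) series as in Proposition \ref{scp} and Theorem \ref{TrSc}, the contour shift to the unitary axis with residues as in \ref{forg} and \ref{triv}, and the selection of square-integrable residues. You also correctly identify, as the paper does, that the argument cannot simply invoke Theorem \ref{eiscont} for non-cuspidal data and that the continuation and the spectral decomposition must be established together by the induction of Chapter 7 of \cite{LES}, which is precisely where the paper itself stops and defers to \cite{LES}, \cite{MW2} and \cite{A1}.
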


Consider  functions $F_{P}$ on $\iapg$ with values in $\HMCP$ 
such that, for some $\vf$ smooth, $K$-finite and compactly supported on $X_G$ and
for any $\Phi\in\HMCPA$
$$<F_{P}(\lambda),\Phi>_\LMCP=\hvf(\Phi,\lambda)=< \vf,{E(\bullet;\Phi,\lambda)}>_\LMC
=\int_{X_G} \vf(x)\overline{E( x;\Phi,\lambda)}\,d x\ptf$$
The functional equations satisfied by Eisenstein series implies that
$$\hvf(\Phi,\lambda)=\hvf(s\lambda,\Mint(s,\lambda)\Phi)
$$
which is equivalent to
$$<F_{P}(\lambda),\Phi>_\LMCP=<F_{Q}(s\lambda),\Mint(s,\lambda)\Phi>_\LMCQ$$
where $Q$ is the standard parabolic subgroup with Levi subgroup $\Mlev_Q=s(\Mlev_P)$.
This tells us that functions $F_{P}$ satisfy functional equations:
$$F_{Q}(s\lambda)=\Mint(s,\lambda)F_{P}(\lambda)\ptf$$

Let $\AMC$ be the set of  associate classes of  standard
parabolic subgroups and $\AMCP$ a set of representatives of these classes. Let
 $w(P)$ be the order of $\weyl(\ap,\ap)$ and $n(\ap)$  the number of chambers in $(\ap/\ag)^*$. 
Now, consider  $\hat\LMC$ the Hilbert space
 of  collections of measurable functions $F_P$ on $\iapg$
with values in $\HMCP$
satisfying the above functional equations  and are square integrable:
$$\sum_{\PMC\in\AMC}\sum_{P\in\PMC}\frac{1}{ n(\ap) }
\int_{\lambda\in \iapg}
||F_{P}(\lambda)||^2_\HMCP\,d\lambda<\infty\ptf$$
The spectral decomposition is usually formulated as follow (see \cite{A1} or \cite{MW2}
for example).

\begin{theorem}\label{speca} 
 There is a dense subset of function $\vf\in\LMC$ that can be expressed as
$$
\vf(x)=\sum_{\PMC\in\AMC}\sum_{P\in\PMC}\frac{1}{ n(\ap) }\int_{\lambda\in \iapg}
E(x;F_P(\lambda),\lambda)\,d\lambda\ptf
$$
The scalar product  in $\LMC$ may be written 
$$<\vf,\vf>_\LMC=\sum_{\PMC\in\AMC}\sum_{P\in\PMC}\frac{1}{ n(\ap) }
\int_{\lambda\in \iapg}
<F_{P}(\lambda),{F_{P}(\lambda)}>_\HMCP\,d\lambda\ptf$$
This extends to an isomorphism of Hilbert spaces $\hat\LMC\to\LMC$.
 \end{theorem}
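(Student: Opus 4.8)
Taking Theorems \ref{eiscont} and \ref{disc} as granted, the plan is to construct the map $\hat\LMC\to\LMC$ explicitly and to verify it is a unitary isomorphism. On the dense subspace of collections $(F_P)$ that are smooth and compactly supported on each $\iapg$, one sends $(F_P)$ to the wave packet
$$\vf(x)=\sum_{\PMC\in\AMC}\sum_{P\in\PMC}\frac{1}{n(\ap)}\int_{\lambda\in\iapg}E(x;F_P(\lambda),\lambda)\,d\lambda,$$
which is legitimate because by Theorem \ref{eiscont}(vi) the Eisenstein series are analytic for $\lambda\in\iapg$, so the integrands are regular; that the result lies in $\LMC$ follows from the rapid decrease of truncated automorphic forms in Proposition \ref{ident}(iii). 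The functional equations $F_Q(s\lambda)=\Mint(s,\lambda)F_P(\lambda)$ of Theorem \ref{eiscont}(v) make the inner summand essentially independent of the representative $P\in\PMC$, and the normalisations $1/n(\ap)$ are chosen to absorb the resulting multiple counting. The whole argument runs by induction on the semisimple rank of $\GG$: the inducing data $\Phi$ range over the discrete spectrum $\HMCP$ of the Levi $M_P$, which by Theorem \ref{disc} applied to $M_P$ is generated by residues of Eisenstein series from cuspidal data on smaller subgroups, so that the sum over associate classes is really a decomposition of $\LMC$ by cuspidal support.

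The core computation is the Plancherel identity for the norm. A single Eisenstein series is not square integrable, so one works through the truncation operator: for a wave packet $\vf$ the difference $\trunc\vf-\vf$ is concentrated high in the cusps and its $\LMC$-mass tends to $0$, so $\|\vf\|_\LMC^2=\lim_{T\to\infty}\|\trunc\vf\|_\LMC^2$. Expanding $\trunc\vf$ and using the explicit inner-product formula for truncated Eisenstein series (the Maass--Selberg relation recalled in \ref{TrSc}, itself a consequence of Lemmas \ref{fc} and \ref{cstt}), the quantity $\|\trunc\vf\|_\LMC^2$ becomes a double integral over $\iapg\times\iapg$ of a finite sum, indexed by the $s\in\weyl(\ap,\aq)$, of terms carrying an oscillatory factor $e^{\langle s\lambda-\mu,T\rangle}$ divided by a product of linear forms in $s\lambda-\mu$ and weighted by $\langle\Mint(s,\lambda)F_P(\lambda),F_Q(\mu)\rangle$. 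As $T\to\infty$ these kernels behave like approximate Dirac masses along the locus $s\lambda=\mu$: the off-diagonal contributions oscillate away and the diagonal ones collapse the double integral to a single one. By the unitarity of $\Mint(s,\lambda)$ on $\iapg$ from Theorem \ref{eiscont}(iv) and the functional equations, each surviving term contributes equally, and the combinatorial count of the relevant $s$ reproduces exactly the factors $1/n(\ap)$; what is left is
$$\sum_{\PMC\in\AMC}\sum_{P\in\PMC}\frac{1}{n(\ap)}\int_{\lambda\in\iapg}\|F_P(\lambda)\|_\HMCP^2\,d\lambda,$$
which is the asserted formula. The same expansion shows that the images of two non-associate classes are orthogonal, since the sets $\weyl(\ap,\aq)$ indexing the cross terms are then empty; equivalently, by Lemma \ref{cstt} the constant term along $Q$ of an Eisenstein series attached to $P$ vanishes unless $P$ and $Q$ are associate.

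The genuine obstacle is \textbf{completeness}: that these wave packets and residues exhaust $\LMC$. Suppose $\vf\in\LMC$ is orthogonal to every wave packet; then $\hvf(\Phi,\lambda)=\langle\vf,E(\bullet;\Phi,\lambda)\rangle_\LMC$ vanishes for all $P$, all $\Phi\in\HMCPA$ and almost every $\lambda\in\iapg$. By Lemma \ref{fc}, $\hvf(\Phi,\lambda)$ is the pairing of the constant term $\vf_P$ with $\Phi_\lambda$, so its vanishing for all cuspidal $\Phi$ on $P$ and all $\lambda$ forces, after Fourier inversion along the split central torus of $M_P$ and the Gelfand--Piatetskii-Shapiro decomposition for $M_P$, the cuspidal component of $\vf_P$ to vanish. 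Since the remaining components of $\vf_P$ are governed, through transitivity of constant terms and Lemma \ref{cstt}, by Eisenstein series attached to the smaller parabolics contained in $P$ --- to which $\vf$ is likewise orthogonal --- the inductive hypothesis forces every constant term $\vf_P$ with $P\neq\GG$ to vanish, so that $\vf$ is a cusp form. But the class $\{\GG\}$ contributes exactly $\LMCD\supset\LMCC$, to which $\vf$ is orthogonal, whence $\vf=0$. This closes the induction, and the isometry onto a dense subspace extends by continuity to the desired isomorphism $\hat\LMC\to\LMC$.
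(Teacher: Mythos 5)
Your route is genuinely different from the paper's. The paper never argues forward from wave packets: its Section 3 starts from Proposition \ref{amc} (the decomposition $\LMC=\bigoplus\CMC_\PMC$ into closures of spaces of pseudo-Eisenstein series built from \emph{cuspidal} data), computes the inner product of two pseudo-Eisenstein series via constant terms in Proposition \ref{scp}, and then shifts the contour from $\lambda_0+\iapg$ to $\iapg$; the boundary term gives the continuous part (Proposition \ref{van}) and the residues picked up along the way simultaneously produce the residual discrete spectrum, i.e.\ Theorem \ref{disc}, and the continuation of Eisenstein series attached to non-cuspidal discrete data. You invert this: you take \ref{eiscont} and \ref{disc} as given and verify the Plancherel identity for wave packets by truncation and a $T\to\infty$ limit. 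That is essentially Arthur's presentation in \cite{A1} and is a legitimate organization, but in the logic of \cite{LES} it is closer to a consistency check than a proof, since the paper stresses that \ref{disc} and the meromorphic continuation for merely discrete $\Phi$ are established \emph{together with} the spectral decomposition in Chapter 7.

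Beyond that, two concrete gaps. First, your core computation invokes \ref{TrSc}, but that formula (like \ref{cstt}) is stated in the paper only for \emph{cuspidal} $\Phi$, whereas $F_P(\lambda)$ ranges over the full discrete spectrum $\HMCP$, residual forms included; the Maass--Selberg relations for Eisenstein series attached to residual data have a different shape (their constant terms are not given by \ref{cstt}) and their proof is part of the hard content you are bypassing. It is also not accurate that \ref{TrSc} is ``a consequence of Lemmas \ref{fc} and \ref{cstt}'': the paper derives it from the full truncation machinery of \cite{LW}, and before any $T\to\infty$ limit can be taken one must know that the singularities of the factors $\varepsilon_R(s\lambda-\mu)$ cancel (the first step in the proof of \ref{equa}). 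Second, completeness is where the real content lies and is the least substantiated step. Orthogonality to wave packets over $\iapg$ is not the same as orthogonality to pseudo-Eisenstein series (integrals over $\lambda_0+\iapg$ with $\lambda_0$ in the cone of convergence), which is what Langlands' Lemma 3.7, quoted in the proof of \ref{amc}, actually handles; passing from $\hvf(\Phi,\lambda)=0$ for a.e.\ imaginary $\lambda$ to the vanishing of the cuspidal component of $\vf_P$ requires $\vf$ compactly supported and a Paley--Wiener argument, and the inductive clause ``the remaining components of $\vf_P$ are governed by smaller parabolics'' is precisely Theorem \ref{disc} for the Levi $M_P$ together with the residue analysis --- that is, the statement under proof. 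As written, the induction does not close.
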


A comment is in order as regards the choice of Haar measures.
We choose a Haar measure on $\AG\bs G$ and endow $\Gamma$ with the canonical measure
for a discrete group. This defines the $G$-invariant measure on $X_G$. We take the invariant measure on
$N_P$ such that $\Gamma_P\cap N_P\bs N_P$ has volume 1.  This and the choice of a Haar measure 
$da$ on $\AP/\AG$
produce an invariant measure on $$X_P=\AP\Gamma_P N_P\bs G$$ used to normalize the
scalar product in $\HMCP$. 
The real vector space $\iapg=i(\ap/\ag)^*$ is identified with the Pontryagin dual of $\AP/\AG$ 
via the  pairing $$(\lambda,a)\mapsto e^{<\lambda,H_P(a)>}$$
and  $d\lambda$  is the canonical Pontryagin dual Haar measure of $da$\footnote{% 
Observe  that,  instead of Haar measures on groups $\ap/\ag$ and $i(\ap/\ag)^*$ 
related by Pontryagin duality,
surveys  \cite{LB2}  and \cite{A1} use Lebesgue measures 
attached to dual basis in vector spaces $\ap/\ag$ 
 and $(\ap/\ag)^*$  and hence powers of $2\pi i$ show up in their 
 Fourier inversion formulas.}. 
 
 One sometimes uses the following variant of the above theorem.

\begin{theorem} \label{specb}  \pni(i)
For each parabolic subgroup $P$    
one may choose  an orthonormal basis $\B(P)$ of  $L^2_{\mathrm{disc}}(X_P)$ made of automorphic forms
on $P$. 
\pni(ii) There is a dense subset of functions
function $\vf\in\LMC$ that may be writen (with some abuse of notation\footnote{%
Eisenstein series $E(\bullet;\Phi,\lambda)$ do not belong to $\LMC$ and hence
the notation $< \vf,{E(\bullet;\Phi,\lambda)}>_\LMC$  makes sense {\it a priori} only when
the implicit integral is convergent 
which is the case if $\vf$ is compactly supported for exemple. But then, the integral over $\lambda$ may
not be convergent. The isomorphism $\hat\LMC\to\LMC$ 
 in \ref{speca} that extends the correspondence $\vf\mapsto \{F_P\}$
 defined by $<F_P(\lambda),\Phi>=< \vf,{E(\bullet;\Phi,\lambda)}>_\LMC$
 allows to make sense of the statement. Another option is to integrate over (compact)
 balls in $\iapg$ and then take the limits in $\LMC$ when the radii of the balls tend to infinity
(see Theorem VI.2.1 of \cite{MW2}). Except for some simple examples, giving precise
conditions for the convergence of these integrals seems out of reach for the time being.}):
$$\vf(x)=\sum_{\PMC\in\AMC}\sum_{P\in\PMC}\frac{1}{ n(\ap) }\sum_{\Phi\in\B(P)}
\int_{\lambda\in \iapg}< \vf,{E(\bullet;\Phi,\lambda)}>_\LMC
E(x;\Phi,\lambda)\,d\lambda\ptf$$
\pni(iii)
This extends to an isomorphism between Hilbert space $\LMC$ and the Hilbert space of measurable
functions $\psi(\Phi,\lambda)$ such that
$$\sum_{\PMC\in\AMC}\sum_{P\in\PMC}\frac{1}{ n(\ap) }
\sum_{\Phi\in\B(P)}\int_{\lambda\in \iapg}||\psi(\Phi,\lambda)||^2\,d\lambda<\infty\ptf
$$
\pni(iv) In particular, given two smooth $K$-finite compactly supported functions $\vf_1$ and $\vf_2$ on $X_G$ their scalar product
can be written
$$<\vf_1,\vf_2>_\LMC=\sum_{\PMC\in\AMC}\sum_{P\in\PMC}\frac{1}{ n(\ap) }
\sum_{\Phi\in\B(P)}\int_{\lambda\in \iapg}
\hvf_1(\Phi,\lambda)\overline{\hvf_2(\Phi,\lambda)}\,d\lambda\leqno{(\star)}
$$
or equivalently
$$<\vf_1,\vf_2>_\LMC=\sum_{P\in\AMCP}\frac{1}{ w(P)}
\sum_{\Phi\in\B(P)}\int_{\lambda\in \iapg}
\hvf_1(\Phi,\lambda)\overline{\hvf_2(\Phi,\lambda)}\,d\lambda\leqno{(\star\star)}
$$
 with 
$$\hvf_i(\Phi,\lambda)=< \vf_i,{E(\bullet;\Phi,\lambda)}>_\LMC=\int_{X_G} \vf_i(x)\overline{E( x;\Phi,\lambda)}\,d x\ptf$$
\end{theorem}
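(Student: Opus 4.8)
The plan is to deduce Theorem \ref{specb} from the abstract spectral decomposition of Theorem \ref{speca} by expanding the vector valued functions $F_P$ in an orthonormal basis; all the genuine analysis (the meromorphic continuation and functional equations of \ref{eiscont}, the description of the discrete spectrum of \ref{disc}, and the isometry $\hat\LMC\to\LMC$ of \ref{speca}) is taken as given. I first dispose of assertion (i). The space $\HMCP$, being by definition the closure of $\HMCPA$ and coinciding with $L^2_{\mathrm{disc}}(X_P)$, decomposes as a Hilbert direct sum of irreducible subrepresentations with finite multiplicities. In each constituent the $K$-finite vectors annihilated by an ideal of finite codimension in $\goth z(\goth g_\infty)$ form a dense subspace, and such vectors are automorphic forms on $P$ in the sense of \ref{cuspapra}; since for a fixed $K$-type and a fixed ideal this subspace is finite dimensional, one may orthonormalize it. Exhausting over $K$-types and ideals yields an orthonormal basis $\B(P)$ of $\HMCP$ consisting of automorphic forms, which proves (i).

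For (ii) and (iii), fix $\vf$ in the dense subset of \ref{speca} and let $\{F_P\}$ be the associated collection, so that $<F_P(\lambda),\Phi>_\LMCP=\hvf(\Phi,\lambda)$ for $\Phi\in\HMCPA$ by the relation defining $\{F_P\}$. Expanding $F_P(\lambda)\in\HMCP$ in $\B(P)$ gives $F_P(\lambda)=\sum_{\Phi\in\B(P)}\psi(\Phi,\lambda)\,\Phi$ with $\psi(\Phi,\lambda)=<F_P(\lambda),\Phi>_\HMCP=\hvf(\Phi,\lambda)$, the last equality because $\HMCP$ is closed in $\LMCP$. As $\Phi\mapsto E(x;\Phi,\lambda)$ is linear, the continued Eisenstein series of \ref{eiscont} satisfies $E(x;F_P(\lambda),\lambda)=\sum_{\Phi\in\B(P)}\psi(\Phi,\lambda)\,E(x;\Phi,\lambda)$; substituting into the expansion of \ref{speca} yields (ii). Parseval in $\HMCP$ gives $||F_P(\lambda)||^2_\HMCP=\sum_{\Phi\in\B(P)}|\psi(\Phi,\lambda)|^2$, and inserting this into the isometry of \ref{speca}, the positive $\Phi$-sum being interchanged with the $\lambda$-integral by monotone convergence, gives the square summability of (iii). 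Polarizing between two such functions $\vf_1,\vf_2$, with associated collections $\{F_P^{(1)}\},\{F_P^{(2)}\}$ and coefficients $\hvf_i(\Phi,\lambda)=<F_P^{(i)}(\lambda),\Phi>_\HMCP$, then produces formula $(\star)$.

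It remains to pass from $(\star)$ to $(\star\star)$, i.e. to collapse the inner sum over the associate class $\PMC$ to a single representative $P\in\AMCP$. For each $Q$ the integrand is the quantity $\sum_{\Phi\in\B(Q)}\hvf_1(\Phi,\mu)\overline{\hvf_2(\Phi,\mu)}=<F_Q^{(1)}(\mu),F_Q^{(2)}(\mu)>_{\HMC_Q}$ at $\mu\in i(\aq/\ag)^*$, and by the functional equation $F_Q(s\mu)=\Mint(s,\lambda)F_P(\lambda)$ with $\mu=s\lambda$ together with the unitarity of $\Mint(s,\lambda)$ on purely imaginary $\lambda$ (assertions (iii) and (iv) of \ref{eiscont}), the value of this integrand at $(Q,s\lambda)$ equals its value at $(P,\lambda)$ for every $s\in\weyl(\ap,\aq)$. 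Since such $s$ are measure preserving (the $d\lambda$ being Pontryagin duals of compatible measures on the $\AP/\AG$), the change of variables $\mu=s\lambda$ shows that each $Q\in\PMC$ contributes the same integral as $P$, whence
$$\sum_{Q\in\PMC}\frac1{n(\aq)}\int_{i(\aq/\ag)^*}(\cdots)\,d\lambda=\frac{|\PMC|}{n(\ap)}\int_{\iapg}(\cdots)\,d\lambda=\frac1{w(P)}\int_{\iapg}(\cdots)\,d\lambda\ptf$$
The final step is the reduction theory identity $n(\ap)=|\PMC|\,w(P)$ relating the number of chambers, the cardinality of the associate class, and the order of the self-associating Weyl group (see \cite{A1}, \cite{MW2}); one checks it directly in examples such as the maximal parabolics of $GL(3)$.

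The main obstacle is not this formal bookkeeping but the very meaning of the coefficients, as flagged in the footnote to (ii): because $E(\bullet;\Phi,\lambda)\notin\LMC$, the pairing $\hvf(\Phi,\lambda)=<\vf,E(\bullet;\Phi,\lambda)>_\LMC$ is literally an absolutely convergent integral over $X_G$ only when $\vf$ is compactly supported, while the subsequent integral over $\lambda$ need not converge term by term. I would handle this exactly as in \ref{speca}: define the correspondence $\vf\mapsto\{F_P\}$, hence $\psi(\Phi,\lambda)=\hvf(\Phi,\lambda)$, through the Hilbert space isomorphism $\hat\LMC\to\LMC$, establish all the identities first on the dense subset of smooth $K$-finite compactly supported functions where every integral is legitimate, and extend by continuity and density. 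With this convention the equalities $(\star)$ and $(\star\star)$ of (iv) hold for arbitrary smooth $K$-finite compactly supported $\vf_1,\vf_2$, and (iii) is the assertion that $\vf\mapsto\psi$ is the resulting isometric isomorphism.
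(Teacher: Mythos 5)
Your proposal is correct and follows essentially the same route as the paper, which derives Theorem \ref{specb} as a variant of Theorem \ref{speca}: assertion (i) from Theorem \ref{disc}, assertions (ii)--(iii) by expanding $F_P(\lambda)$ in the orthonormal basis $\B(P)$ and applying Parseval, and the equivalence of $(\star)$ and $(\star\star)$ from the unitarity of $\Mint(s,\lambda)$ for purely imaginary $\lambda$ together with the counting identity $n(\ap)=w(P)\,a(\PMC)$. You merely spell out details the paper leaves implicit, including the convergence caveats already flagged in its footnote.
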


Assertion (i) is a consequence of Theorem \ref{disc}. 
The equivalence of $(\star)$ and  $(\star\star)$ follows from the following remarks:
 the value of the integral is constant when $P$ varies in a given $\PMC$  since
 intertwining operators $\Mint(s,\lambda)$ are unitary when $\lambda$ is purely imaginary;
it remains to observe that if we denote by $a(\PMC)$  the cardinal of $\PMC$ one has:
$n(\ap)= w(P)a(\PMC)\ptf$

The spectral decomposition allows to express $\LMC$ as a Hilbert direct integral.
Let $\pi$ be an unitary representation of $P=\PP(\ade)$
trivial on $N_P\AG$ and $\lambda\in\iapg$.
We denote by
$\I_P^\G(\pi,\lambda)$
 the right regular representation of $\G$ in the space of  function $\phi$ on $\G$ 
with values in the space of $\pi$ satisfying
$$\Phi(px)=e^{<\lambda+\rho_P,\H_P(p)>}\pi(p)\Phi(x)$$
and that are square integrable on $K$.  The ``parabolically induced''
representation
$\I_P^\G(\pi,\lambda)$ is  unitary.
Define $\FMC_P(\lambda)$ by:
$$\FMC_P(\lambda)=\I_P^\G(L^2_{\mathrm{disc}}(X_P),\lambda)\ptf$$ 
The unitarity of intertwining operators $\Mint(s,\lambda)$ for $\lambda\in\iapg$
shows that for $s\in\weyl(\ap,\aq)$
$$\FMC_P(\lambda)\simeq\FMC_Q(s\lambda)\ptf$$
 Let $\PMC$ be the association class of $P$.
Let $\FMC_\PMC$ be the Hilbert direct integral of 
representations $\FMC_P(\lambda)$:
$$\FMC_\PMC=\int_{\iapg/\weyl(\ap,\ap)}^{\oplus}\FMC_P(\lambda)\,\,d\lambda\ptf$$
Up to isomorphism, it is independent of the choice of $P\in\PMC$.
Theorems \ref{speca}  and \ref{specb} imply that
the right regular representation of $\G$ in
$\LMC=L^2(X_G)$ can be written as the direct sum of direct integrals $\FMC_\PMC$:
\begin{proposition}\label{fmc}
$$\LMC=\bigoplus_{\PMC\in\AMC}\FMC_\PMC=
\bigoplus_{\P\in\AMCP}
\int_{\iapg/\weyl(\ap,\ap)}^{\oplus}\FMC_P(\lambda)\,\,d\lambda\ptf
$$
\end{proposition}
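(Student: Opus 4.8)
The plan is to read the proposition as a translation of the spectral decomposition of Theorems~\ref{speca} and~\ref{specb} into the language of Hilbert direct integrals: no new analytic input is required, only a careful repackaging of the data space $\hat\LMC$ together with a check of $G$-equivariance. First I would recall that Theorem~\ref{speca} furnishes a unitary isomorphism $\LMC\simeq\hat\LMC$, $\vf\mapsto\{F_P\}$, characterized by $<F_P(\lambda),\Phi>_\HMCP=<\vf,E(\bullet;\Phi,\lambda)>_\LMC$ for all $\Phi\in\HMCPA$, where $\hat\LMC$ consists of the collections $\{F_P\}$ obeying the functional equations $F_Q(s\lambda)=\Mint(s,\lambda)F_P(\lambda)$ with the norm displayed before~\ref{speca}. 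By Theorem~\ref{disc} applied to the Levi subgroups $M_P$, each fiber space $\HMCP$ coincides with $L^2_{\mathrm{disc}}(X_P)$, so it suffices to identify $\hat\LMC$ with $\bigoplus_{\PMC\in\AMC}\FMC_\PMC$ as unitary $G$-modules.

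Second, I would fold the norm of $\hat\LMC$ onto a fundamental domain. For $Q=P$ the functional equation reads $F_P(s\lambda)=\Mint(s,\lambda)F_P(\lambda)$ for $s\in\weyl(\ap,\ap)$, and since $\Mint(s,\lambda)$ is unitary for $\lambda\in\iapg$ by Theorem~\ref{eiscont}(iv), the map $\lambda\mapsto\|F_P(\lambda)\|^2_\HMCP$ is $\weyl(\ap,\ap)$-invariant; hence $\int_\iapg\|F_P(\lambda)\|^2\,d\lambda=w(P)\int_{\iapg/\weyl(\ap,\ap)}\|F_P(\lambda)\|^2\,d\lambda$. For distinct $P,Q\in\PMC$ the functional equation with $s\in\weyl(\ap,\aq)$, again unitary on $\iapg$, shows that these fundamental-domain integrals agree throughout the class. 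Using the identity $n(\ap)=w(P)\,a(\PMC)$ with $a(\PMC)$ the cardinal of $\PMC$, the $\PMC$-summand of the $\hat\LMC$-norm collapses to $\int_{\iapg/\weyl(\ap,\ap)}\|F_P(\lambda)\|^2\,d\lambda$ for any single representative $P$. Thus a collection $\{F_P\}_{P\in\PMC}$ is determined by the restriction of one $F_P$ to $\iapg/\weyl(\ap,\ap)$, which identifies the $\PMC$-component of $\hat\LMC$ isometrically with $\FMC_\PMC=\int^\oplus_{\iapg/\weyl(\ap,\ap)}\FMC_P(\lambda)\,d\lambda$; independence of the chosen representative is exactly $\FMC_P(\lambda)\simeq\FMC_Q(s\lambda)$.

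Third, I would verify that this isomorphism intertwines the right regular representation $R$ with the fiberwise induced action. Writing $R(f)^*=R(f^*)$ with $f^*(g)=\overline{f(g^{-1})}$, and letting $\{G_P\}$ be the data attached to $R(f)\vf$, the defining relation gives $<G_P(\lambda),\Phi>=<\vf,R(f^*)E(\bullet;\Phi,\lambda)>$. The intertwining identity $R(f^*)E(\bullet;\Phi,\lambda)=E(\bullet;I_{P,\lambda}(f^*)\Phi,\lambda)$ together with the unitarity of $I_{P,\lambda}$ on $\iapg$ then yields $<G_P(\lambda),\Phi>=<I_{P,\lambda}(f)F_P(\lambda),\Phi>$, so that $R(f)\vf\mapsto\{I_{P,\lambda}(f)F_P\}$. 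Since $I_{P,\lambda}$ realized on $\HMCP=L^2_{\mathrm{disc}}(X_P)$ is precisely $\FMC_P(\lambda)=\I_P^\G(L^2_{\mathrm{disc}}(X_P),\lambda)$, the action on each fiber is the parabolically induced representation, and assembling the fibers produces the asserted decomposition as a direct sum of direct integrals.

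The main obstacle here is not analytic but combinatorial and measure-theoretic: the whole analytic substance already resides in Theorems~\ref{speca},~\ref{specb} and~\ref{eiscont}, which I am free to invoke. The points demanding care are that $\lambda\mapsto\|F_P(\lambda)\|$ genuinely descends to the quotient $\iapg/\weyl(\ap,\ap)$ (the locus of nontrivial stabilizers being negligible), that the normalizing factors $1/n(\ap)$, the orbit counts, and the relation $n(\ap)=w(P)\,a(\PMC)$ conspire to give exactly the quotient measure, and that $\FMC_\PMC$ is well defined independently of $P\in\PMC$, which rests on the unitarity and analyticity of $\Mint(s,\lambda)$ on $\iapg$ from Theorem~\ref{eiscont}(iv),(vi).
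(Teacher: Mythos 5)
Your proposal is correct and follows exactly the route the paper intends: the paper offers no separate proof of Proposition~\ref{fmc}, merely asserting that it follows from Theorems~\ref{speca} and~\ref{specb}, and your argument is a faithful elaboration of that derivation (folding the norm onto a fundamental domain via the functional equations and unitarity of $\Mint(s,\lambda)$, using $n(\ap)=w(P)\,a(\PMC)$, and checking equivariance through $E(x;I_{P,\lambda}(f)\Phi,\lambda)=E(x;\Phi,\lambda)\star f$). The details you supply, including the identification $\HMCP=L^2_{\mathrm{disc}}(X_P)$ via Theorem~\ref{disc}, are precisely the ones the paper leaves implicit.
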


\section{About the proof}

\subsection{Pseudo-Eisenstein series and their scalar product}
Consider a parabolic subgroup $P$ 
and the space  $$X_{P,G}=\AG\Gamma_P N_P\bs G\ptf$$
Denote by $\DMC_P$ the space of $K$-finite functions $\phi$  on $X_{P,G}$  such that 
 $a\mapsto\phi(a\,\bullet\, k)$ is  a compactly supported  function on $\AP/\AG$ with values in a finite 
dimensional space of cuspidal functions on $M_P $ independent of $k\in K$. 
For $\phi\in \DMC_P$ the series
$$\Ei_\phi(x)=\sum_{\gamma\in\Gamma_P\bs\Gamma_G}\phi(\gamma x)$$
are absolutely convergent and, following  \cite{MW2}, we  call them
pseudo-Eisenstein series\footnote{These series are denoted $\hat\phi$
in \cite{LB2}, \cite{A1}  and \cite{Lai}. They appear as series $\theta_\phi$ in \cite{MW2} but there
$\phi$ is the Fourier transform, on the torus, of our $\phi$.}.

Let $\AMC$ be the set of associate classes
of standard parabolic subgroups. For $\PMC\in\AMC$ we denote by $\CMC_\PMC$ the closure of 
the vector space generated by the $\Ei_\phi$ for $\phi\in\DMC_P$
with $P\in\PMC$.
Observe that $\DMC_G=\CMC_G$ is the space of
cusp forms on $\G$.

A first step toward the spectral decomposition is 
a direct sum decomposition indexed by association classes of standard parabolic subgroups.
 This is  Lemma 2 in \cite{LB2} and Proposition II.2.4 in \cite{MW2}. 
 
\begin{proposition} \label{amc}
 One has a direct sum decomposition
$$\LMC=\bigoplus_{\PMC\in\AMC}\CMC_\PMC$$
\end{proposition}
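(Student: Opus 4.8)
The plan is to prove two things: that the subspaces $\CMC_\PMC$ are pairwise orthogonal for distinct association classes, and that their sum is dense in $\LMC$. Both reduce, via the formal computation of Lemma~\ref{fc}, to statements about cuspidal projections of constant terms.

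First I would establish orthogonality. Fix $\phi\in\DMC_P$ and $\psi\in\DMC_Q$ with $P$ and $Q$ in different association classes. By Lemma~\ref{fc},
$$<\Ei_\phi,\Ei_\psi>_{X_G}=\int_{X_{P,G}}\phi(x)\overline{(\Ei_\psi)_P(x)}\,dx\ptf$$
Since $\phi$ is cuspidal in the $M_P$ variable, this pairing only sees the cuspidal projection along $M_P$ of the constant term $(\Ei_\psi)_P$. I would compute that constant term by unfolding $\Ei_\psi$ and performing a Bruhat decomposition of the double cosets $\Gamma_Q\bs\Gamma/(\Gamma_P\cap N_P)$; as in Lemma~\ref{cstt} the surviving terms are indexed by Weyl elements $s$ with $s(\aq)\supset\ap$. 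For $s$ with $s(\aq)\supsetneq\ap$ the corresponding term is an Eisenstein series on $M_P$ attached to a proper parabolic, hence orthogonal to $M_P$-cusp forms by Lemma~\ref{efc}; a term cuspidal along $M_P$ can occur only for $s$ with $s(\aq)=\ap$, that is, only when $P$ and $Q$ are associate. As they are not, the cuspidal projection of $(\Ei_\psi)_P$ vanishes entirely, whence $<\Ei_\phi,\Ei_\psi>=0$, and by density $\CMC_\PMC\perp\CMC_\QMC$.

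For density, suppose $\vf\in\LMC$ is orthogonal to every $\CMC_\PMC$. By Lemma~\ref{fc} again, orthogonality to all $\Ei_\phi$ with $\phi\in\DMC_P$ says exactly that, for each standard parabolic $P$, the cuspidal projection along $M_P$ of the constant term $\vf_P$ vanishes. I would then show, by induction on the semisimple rank of $\GG$, that this forces $\vf=0$. The base case is the anisotropic one, where $X_G$ is compact, $\LMC=\LMCC$, and vanishing of the cuspidal projection of $\vf_G=\vf$ is the conclusion. For the inductive step, transitivity of constant terms gives $(\vf_P)_{\bar R}=\vf_R$ for standard $R\subset P$ (with $\bar R=R\cap M_P$), so all the $M_P$-cuspidal projections of the $M_P$-constant terms of $\vf_P$ vanish; applying the inductive hypothesis to the smaller group $M_P$ yields $\vf_P=0$ for every proper $P$. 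A function all of whose proper constant terms vanish is a cusp form, so $\vf\in\LMCC=\CMC_G$; combined with $\vf\perp\CMC_G$ this gives $\vf=0$.

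The main obstacle is analytic rather than combinatorial, and it lies in the inductive step of the density argument: to invoke the inductive hypothesis for $\vf_P$ one needs it to lie in the relevant $L^2$ space on $X_{M_P}$, whereas constant terms of square-integrable functions need not be square-integrable. Making this rigorous requires restricting to a suitable dense subspace (say smooth, $K$-finite, compactly supported $\vf$) and controlling the growth of constant terms through reduction theory. This is precisely the point on which, following the conventions announced in the introduction, one stays light on the analytic estimates.
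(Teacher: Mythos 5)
Your proposal is correct and follows essentially the same route as the paper: Lemma~\ref{fc} together with the Bruhat decomposition $\Gamma_P\bs\Gamma/\Gamma_Q\simeq \weyl^P\bs\weyl^G/\weyl^Q$ for the orthogonality of distinct association classes, and an inductive argument through parabolic subgroups (for which the paper simply cites Lemma 3.7 of \cite{LES} and Theorem II.1.12 of \cite{MW2}) for the density. The only cosmetic difference is that you keep track of the cuspidal projection of the constant term $(\Ei_\psi)_P$, whereas the paper invokes a dichotomy on the relative position of $\Mlev_P$ and $s(\Mlev_Q)$ to make that constant term vanish outright; the analytic caveat you flag about square-integrability of constant terms is likewise present, and equally elided, in the paper.
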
 

\begin{proof}
Lemma \ref{fc} and an inductive argument starting with the minimal parabolic subgroup
show that a function on $X_G$ 
which is orthogonal to all $\Ei_\phi$ for all  standard parabolic subgroups, including $G$,
  must vanish. This is Lemma 3.7 p.~55 and its Corollary p.~58 of \cite{LES} (see also Theorem   II.1.12  in \cite{MW2}).
It remains to show that $\CMC_\PMC$ and $\CMC_\QMC$ are orthogonal whenever $\PMC\ne\QMC$. 
 Consider two standard parabolic subgroups $P$ and $Q$ we have either
  \pni (i)  for some element $s$ of the Weyl group $\Mlev_Q\subset s(\Mlev_P)$
  \pni or \pni
 (ii)   for any element $s$ of the Weyl group $\Mlev_Q\cap s(\NN_P)$
 is a non trivial unipotent subgroup.
 \pni 
Consider $P$ and $Q$ that are not associated; then, up to exchanging the role of $P$ and $Q$,
we may assume that (ii) holds. Now consider  functions $\phi$ and $\psi$ 
cuspidal on $P$ and $Q$ respectively. The above formal computation yields
$$<\Ei_\phi,\Ei_\psi>=\int_{X_{P,G}}\phi(x)\overline
{\Ei_{\psi,P}(x)}\,dx $$
where $\Ei_{\psi,P}$ is the constant term along $P$ of $\Ei_{\psi}$. 
 Using  Bruhat decomposition 
 $$\Gamma_P\bs\Gamma/\Gamma_Q\simeq \weyl^P\bs   \weyl^G /  \weyl^Q$$  
 where $\weyl^P$ denotes the Weyl group of  $M_P$,
we see that $\Ei_{\psi,P}$ vanishes since $\psi$ is cuspidal on $Q$. 
\end{proof}

Proposition \ref{amc} reduces the spectral decomposition of $\LMC$
to the spectral decomposition of spaces $\CMC_\PMC$.
To proceed further one needs to compute the scalar product of two
pseudo-Eisenstein series when it does not vanish.
Consider $\phi\in\DMC_P$; then the function $\Phi$ on $X_P\times(\ap/\ag)^*\otimes\CM$   
 given  by
$$\Phi(x,\lambda)=\int_{\AP/\AG}\phi(ax)
e^{-<\lambda+\rho_P,H_P(ax)>}\,da$$
is cuspidal on $\P$ 
in the first variable and analytic of Paley-Wiener type in the second one.
We identify the Pontryagin dual of $\AG\bs\AP$ with $\iapg=i(\ap/\ag)^*$.
By Fourier inversion one recovers $\phi$:
$$\phi(x)=\int_{\lambda\in\lambda_0+\iapg}
\Phi(x,\lambda)e^{<\lambda+\rho_P,H_P(x)>}
\,d\lambda$$ where  $\lambda_0$ is arbitrary and  $d\lambda$ is  the Haar measure
on $\iapg=i(\ap/\ag)^*$ dual,
for Pontryagin duality, to the Haar measure $da$
and hence the pseudo-Eisenstein series $\Ei_\phi$ is equal to an integral of Eisenstein series:
$$\Ei_\phi(x)=\int_{\lambda\in\lambda_0+\iapg}
E(x;\Phi(\bullet,\lambda),\lambda)
\,d\lambda$$
for any $\lambda_0\in C_P+\rho_P$ (which one may also write $\lambda_0>\rho_P$).
We may now give the formula for 
 the scalar product of two pseudo-Eisenstein series which is the key to the spectral decomposition.

\begin{proposition}\label{scp} Consider two associated parabolic subgroups $P$ and $Q$
and two functions $\phi_1\in\DMC_P$ and  $\phi_2\in\DMC_Q$.
 When $\lambda_0\in C_P+\rho_P$ one has, with the notation of \ref{cstt}:
$$<\Ei_{\phi_1},\Ei_{\phi_2}>_\LMC=\sum_{s  \in \weyl(\ap,\aq)}\int_{\lambda\in\lambda_0+\iapg}
<\Mint(s,\lambda)\Phi_1(\bullet,\lambda),{\Phi_2(\bullet,-s\overline\lambda)}>_\LMCP\,d\lambda\ptf
$$ 
\end{proposition}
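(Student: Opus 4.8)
The plan is to collapse one of the two series by unfolding and then to evaluate the resulting pairing by Fourier analysis on the central torus $\AQ/\AG$. First I would unfold $\Ei_{\phi_2}$: applying Lemma~\ref{fc} with $Q$ in place of $P$ and $\vf=\Ei_{\phi_1}$, and then taking complex conjugates, one gets
$$<\Ei_{\phi_1},\Ei_{\phi_2}>_\LMC=\int_{X_{Q,G}}\overline{\phi_2(x)}\,(\Ei_{\phi_1})_Q(x)\,dx\ptf$$
Everything is thereby reduced to the constant term $(\Ei_{\phi_1})_Q$ of the first pseudo-Eisenstein series along $Q$, paired against $\overline{\phi_2}$ over $X_{Q,G}=\AG\Gamma_Q N_Q\bs G$.

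Next I would compute that constant term. Writing $\Ei_{\phi_1}$ as a superposition of genuine Eisenstein series,
$$\Ei_{\phi_1}(y)=\int_{\lambda\in\lambda_0+\iapg}E(y;\Phi_1(\bullet,\lambda),\lambda)\,d\lambda\comm{with}\lambda_0\in C_P+\rho_P\ptf$$
and applying Lemma~\ref{cstt} under the integral sign (legitimate in this domain of absolute convergence, where each $\Phi_1(\bullet,\lambda)$ is cuspidal on $P$, and using that $P$ and $Q$ are associated so that $\weyl(\ap,Q)=\weyl(\ap,\aq)$), I obtain
$$(\Ei_{\phi_1})_Q(y)=\int_{\lambda\in\lambda_0+\iapg}\sum_{s\in\weyl(\ap,\aq)}e^{<s\lambda+\rho_Q,H_Q(y)>}\bigl(\Mint(s,\lambda)\Phi_1(\bullet,\lambda)\bigr)(y)\,d\lambda\ptf$$
Each $\Mint(s,\lambda)\Phi_1(\bullet,\lambda)$ is cuspidal on $Q$, hence a function on $X_Q$ and in particular $\AQ$-invariant.

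Then I would insert the Fourier inversion $\phi_2(y)=\int_{\nu\in\nu_0+i(\aq/\ag)^*}\Phi_2(y,\nu)\,e^{<\nu+\rho_Q,H_Q(y)>}\,d\nu$ and pair mode against mode. The heart of the matter is a Plancherel computation on the central torus: viewing $X_{Q,G}$ as fibred over $X_Q$ with fibre $\AQ/\AG$, the modular factor of the Iwasawa measure cancels the combined shift $e^{<2\rho_Q,H_Q(a)>}$ carried by the two Fourier expansions, so that the fibre integral reduces to that of the pure character $e^{<s\lambda+\overline\nu,H_Q(a)>}$, which collapses by Pontryagin duality to a Dirac mass at $s\lambda+\overline\nu=0$, i.e. at $\nu=-s\overline\lambda$. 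Integrating in $\nu$ against this mass leaves exactly
$$<\Ei_{\phi_1},\Ei_{\phi_2}>_\LMC=\sum_{s\in\weyl(\ap,\aq)}\int_{\lambda\in\lambda_0+\iapg}<\Mint(s,\lambda)\Phi_1(\bullet,\lambda),\Phi_2(\bullet,-s\overline\lambda)>_\LMCQ\,d\lambda\ptf$$
the scalar product being the one on $X_Q$ (both arguments are cuspidal on $Q$; the subscript $\LMCP$ in the statement should be read as $\LMCQ$).

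I expect the analytic bookkeeping, rather than any new idea, to be the main obstacle. One must justify exchanging the $\lambda$-integral with the constant-term map and with the integration over $X_{Q,G}$, and keep track of the measure normalizations so that the modular factor cancels the $2\rho_Q$ cleanly. The most delicate point is the alignment of contours: the Dirac mass fires only when the real parts satisfy $\nu_0=-s\lambda_0$, whereas the inversion of $\phi_2$ is written a priori on a contour $\nu_0\in C_Q+\rho_Q$. Since $\Phi_2(\bullet,\nu)$ is of Paley--Wiener type, hence entire in $\nu$, the contour may be moved to $\nu_0=-s\lambda_0$ without meeting a singularity, and the pairing $<\cdot,\cdot>_\LMCQ$ converges there because cusp forms are rapidly decreasing.
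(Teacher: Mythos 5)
Your argument is exactly the one the paper intends: it proves the proposition by combining the unfolding identity of Lemma \ref{fc} with the constant term formula of Lemma \ref{cstt} and Fourier inversion on $\AQ/\AG$, which is precisely what the paper's one-line proof (``an immediate consequence of \ref{fc} and \ref{cstt}'') asserts. Your observation that the inner product should be read over $X_Q$ (i.e.\ in $\LMCQ$ rather than $\LMCP$, since $\Mint(s,\lambda)\Phi_1$ lives on $X_Q$) is also correct.
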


\begin{proof}This is an immediate consequence of  \ref{fc},
and  \ref{cstt}.
\end{proof}

It is useful to extend the space of cuspidal functions $\Phi(\bullet,\lambda)$ by asking that as a function of $\lambda$
it is holomorphic and rapidly decreasing in vertical strips  $||\lambda||<R$ for some $R$. Their Fourier transform
build a space denoted $\DMC_P(R)$. The above formula still holds provided that moreover
$||\lambda_0||<R$.

The spectral decomposition  is obtained by shifting
the integral to the purely imaginary space $\iapg$ i.e.\removelastskip\  moving $\lambda_0$ to $0$.  
To do this one needs to establish
the meromorphic continuation of intertwining operators and to take into account the residues
that show up.  Analytic estimates are moreover necessary to allow such a contour shift.
One establishes at the same time the analytic continuation and the functional equations
of Eisenstein series.

\subsection{Scalar product of truncated Eisenstein series}\label{formula}

In section 9 of \cite{LB2} Langlands states, without detailed proof,
a formula for the scalar product of two  truncated Eisenstein series induced from cusp forms.
We quote the result with the notation of \cite{LW}.
Let $P$ and $Q$ be two  standard  parabolic subgroups and consider: 
$$\lambda\in\ga_Q^*\otimes\CM\comm{et}\mu\in\ga_R^*\otimes\CM$$
that coincide on $\ga_\G$. One defines
an operator valued function, meromorphic in both variables
whenever intertwining operators are meromorphic,
 $$\omega_{Q|P}^T(\lambda,\mu)=\sum_R\sum_{s\in\weyl(\ga_P,\ga_R)}
 \sum_{t\in\weyl(\ga_Q,\ga_R)}
 e^{<s\lambda-t\mu\,,\,T>}\varepsilon_R(s\lambda-t\mu)
 \Mint(t,-\overline\mu)^*\Mint(s,\lambda)$$
 where  $R$ runs over standard parabolic subgroups associated to $P$ and $Q$.
  The function 
 $\lambda\mapsto\varepsilon_R(\lambda)$ is the inverse of a product of monomials
 $$\varepsilon_R(\lambda)\mun=V_R\prod_{\alpha\in\Delta_R}<\lambda,\check\alpha>
 $$ where the $V_R$ is the volume of the parallelotope generated by $\Delta_R$.  
The function $\omega_{Q|P}^T$
 vanishes unless $P$ and $Q$ are associated. Langlands formula is the following:
 
\begin{theorem}\label{TrSc}
Assume that $\Phi$ and $\Psi$ are cuspidal on $P$ and $Q$. Then, provided $\lambda$ and $\mu$ 
are in the convergence domain for Eisenstein series i.e. the translate by $\rho_P$ (resp. $\rho_Q$)
of the Weyl chamber
$$\int_{X_G}\trunc E(x,\Phi,\lambda)
\overline{\trunc E(x,\Psi,\mu)}dx=<\omega_{Q|P}^T(\lambda,-\overline\mu)\Phi,\Psi>\ptf$$
\end{theorem}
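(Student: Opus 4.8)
The plan is to turn the left-hand side into an integral of constant terms, using that $\trunc$ is a self-adjoint idempotent (Proposition \ref{ident}(ii)) together with the constant-term formula of Lemma \ref{cstt}. Since $(\trunc)^*=\trunc=(\trunc)^2$, and since $E(\cdot;\Phi,\lambda)$ is of moderate growth while $\trunc E(\cdot;\Psi,\mu)$ is rapidly decreasing by Proposition \ref{ident}(iii), the pairing collapses to
$$\int_{X_G}\trunc E(x;\Phi,\lambda)\,\overline{\trunc E(x;\Psi,\mu)}\,dx=<E(\cdot;\Phi,\lambda),\trunc E(\cdot;\Psi,\mu)>_\LMC\ptf$$
I would then expand the surviving $\trunc$ as its defining alternating sum over standard parabolic subgroups $R$ of series over $\Gamma_R\bs\Gamma_G$ of the cut-off constant terms $\hat\tau_R(H_R(\cdot)-T)\,E_R(\cdot;\Psi,\mu)$, and unfold each such series against $E(\cdot;\Phi,\lambda)$ by the formal computation of Lemma \ref{fc}. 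This replaces $E(\cdot;\Phi,\lambda)$ by its own constant term along $R$ and yields
$$\sum_{R\supset P_0}(-1)^{a_R-a_G}\int_{X_{R,G}}E_R(x;\Phi,\lambda)\,\overline{E_R(x;\Psi,\mu)}\,\hat\tau_R(H_R(x)-T)\,dx\ptf$$

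Next I would insert the explicit constant terms. As $\Phi$ and $\Psi$ are cuspidal, Lemma \ref{cstt} gives $E_R(x;\Phi,\lambda)=\sum_{s\in\weyl(\ap,\ar)}e^{<s\lambda+\rho_R,H_R(x)>}\Mint(s,\lambda)\Phi(x)$ when $R$ is associated to $P$, and the analogous expression, summed over $t\in\weyl(\aq,\ar)$, for $E_R(\cdot;\Psi,\mu)$ when $R$ is associated to $Q$. When $R$ is not associated to one of the two, the corresponding constant term is, by the same lemma, a proper Eisenstein series on $R$, hence orthogonal over the Levi to the cusp form furnished by the other factor by Lemma \ref{efc}; the integral then vanishes. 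Thus only the $R$ associated simultaneously to $P$ and $Q$ survive, which is exactly the range of summation in $\omega^T_{Q|P}$. Using the fibration $X_{R,G}\to X_R$ whose fibre is the central directions $\ar/\ag$ (identified through $H_R$), I would separate variables: the factors $\Mint(s,\lambda)\Phi$ and $\Mint(t,\mu)\Psi$ are cusp forms on $X_R$ and integrate to the inner product $<\Mint(s,\lambda)\Phi,\Mint(t,\mu)\Psi>$, while the two exponentials against $\hat\tau_R(H_R-T)$ integrate over $\ar/\ag$ once the $\rho_R$-shifts have cancelled against the modular factor of the measure.

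The heart of the matter is this central integral. For $\nu=s\lambda+t\overline\mu$ one computes
$$\int_{\ar/\ag}\hat\tau_R(H-T)\,e^{<\nu,H>}\,dH=(-1)^{a_R-a_G}\,e^{<\nu,T>}\,\varepsilon_R(\nu)\ptf$$
the integral of an exponential over the simplicial cone cut out by $\hat\tau_R$ producing precisely the reciprocal of the monomial $V_R\prod_{\alpha\in\Delta_R}<\nu,\check\alpha>$, together with an orientation sign $(-1)^{a_R-a_G}$ that cancels the alternating sign inherited from the truncation. Collecting the three sums and writing $<\Mint(s,\lambda)\Phi,\Mint(t,\mu)\Psi>=<\Mint(t,\mu)^*\Mint(s,\lambda)\Phi,\Psi>$, the operator $\Mint(t,\mu)^*$ is recognized as the factor $\Mint(t,-\overline\nu)^*$ of $\omega^T_{Q|P}(\lambda,\nu)$ specialized at $\nu=-\overline\mu$, and one reads off exactly $<\omega^T_{Q|P}(\lambda,-\overline\mu)\Phi,\Psi>$.

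The main obstacle is analytic rather than formal. Each constant-term contribution is only of moderate growth, so the rapid decay — and hence the absolute convergence legitimizing the interchange of the alternating sum over $R$ with the integrations and the unfolding of Lemma \ref{fc} — is a genuinely global property of the truncated object, not of its pieces. One must also verify that the cone integrals converge for $s\lambda+t\overline\mu$ in the range dictated by $\lambda,\mu$ lying in the convergence domain (the general case being reached by meromorphic continuation), and track the orientation sign with enough care that it absorbs the $(-1)^{a_R-a_G}$ exactly. These are precisely the points left implicit in Langlands' statement in \cite{LB2}; carrying them out rigorously, along the lines of \cite{LW}, is where the real work lies, the combinatorial bookkeeping via $(G,M)$-families being comparatively routine.
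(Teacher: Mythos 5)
Your opening move is exactly the one the paper takes from \cite{LW}: use that $\trunc$ is a self-adjoint idempotent to strip the truncation from one factor and reduce to computing $\int_{X_G} E(x;\Phi,\lambda)\overline{\trunc E(x;\Psi,\mu)}\,dx$, and your endgame (the cone integral producing $\varepsilon_R(\nu)$ and a sign $(-1)^{a_R-a_G}$ that cancels the truncation sign, the identification $\Mint(t,\mu)^*=\Mint(t,-\overline{(-\overline\mu)})^*$) is the right bookkeeping. The gap is in the middle step. When you expand the surviving $\trunc$ into its defining alternating sum over $R$ and unfold term by term via Lemma \ref{fc}, the resulting individual integrals
$$(-1)^{a_R-a_G}\int_{X_{R,G}}E_R(x;\Phi,\lambda)\,\overline{E_R(x;\Psi,\mu)}\,\hat\tau_R(H_R(x)-T)\,dx$$
are not all convergent, so the decomposition itself is illegitimate, not merely in need of later analytic justification. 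Concretely: the term $R=G$ is $\int_{X_G}E(x;\Phi,\lambda)\overline{E(x;\Psi,\mu)}\,dx$, which diverges; your vanishing argument (``the constant term is a proper Eisenstein series on $R$, orthogonal to the cusp form furnished by the other factor'') only covers $R$ associated to exactly one of $P$, $Q$, and says nothing when $R$ is associated to neither, where both constant terms are proper Eisenstein series on the Levi and Lemma \ref{efc} gives no orthogonality. Worse, even for $R$ associated to both, the $(s,t)=(1,1)$ cone integral requires $\Re<\lambda+\overline\mu,\check\alpha><0$ for $\alpha\in\Delta_P$, which is incompatible with $\lambda,\mu$ lying in $C_P+\rho_P$; there is no common domain in which all your partial integrals converge, so ``reach the general case by meromorphic continuation'' has nothing to continue from. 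This is precisely the regime of divergent partial expressions, regularizations and cancellations that makes Arthur's original proof in \cite{A2} hard, and that the reduction via $\trunc=\trunc^*=(\trunc)^2$ is designed to avoid.

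The proof the paper points to (section 5.4 of \cite{LW}) does not expand the truncation operator at all. One unfolds the \emph{untruncated} factor $E(\cdot;\Psi,\mu)=\sum_{\gamma\in\Gamma_Q\bs\Gamma}\Psi_\mu(\gamma x)$ against the rapidly decreasing function $\trunc E(\cdot;\Phi,\lambda)$ --- an absolutely convergent unfolding --- which reduces the scalar product to the pairing over $X_{Q,G}$ of $\Psi_\mu$ with the constant term along $Q$ of the truncated Eisenstein series. Since $\Psi$ is cuspidal, only the cuspidal component of that constant term contributes, and this component is an explicit finite sum over $s\in\weyl(\ap,\aq)$ of exponentials cut off by characteristic functions times $\Mint(s,\lambda)\Phi$; integrating it against $\Psi_\mu$ gives exactly the terms of $\omega_{Q|P}^T(\lambda,-\overline\mu)$, with every intermediate integral absolutely convergent and, as the paper's footnote stresses, no cancellation between divergent pieces required. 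You should reorganize the middle of your argument along these lines rather than through the alternating sum defining $\trunc$.
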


\begin{proof} We refer the reader to section 5.4 of \cite{LW} for
a proof much shorter and elementary 
than the one given in section 4 of \cite{A2}.\footnote
{The proof in \cite{A2}  proceeds directly, expanding the two truncated Eisenstein series: 
this yields a lot of rather complicated terms; 
fortunately cancellations occur but  rely on subtle arguments using 
meromorphic continuation of partial expressions,  growth estimates,
contour shiftings and computation of residues. 
In  \cite{LW} one uses that $\trunc$ is an orthogonal projection
and hence it is equivalent to show that $$\int_{X_G}\trunc E(x,\Phi,\lambda)
\overline{E(x,\Psi,\mu)}dx=<\omega_{Q|P}^T(\lambda,-\overline\mu)\Phi,\Psi>\ptf$$ 
Less and simpler terms have to be dealt with,
no cancellation is needed and the computation is elementary.}
\end{proof}

\begin{theorem}\label{equa} Again $\Phi$ is assumed to be cuspidal.
Assume we know that  intertwining operators are holomorphic in some connected open
set $\mathcal O$ containing the set of $\lambda\in\ap^*\otimes\CM$
such that $\Re(\lambda)\in C_P+\rho_P$
(the translated by $\rho_P$ of the Weyl chamber)
and satisfy the functional equations. Then  Eisenstein series $E(x;\Phi,\lambda)$ 
have a holomorphic continuation in $\mathcal O$ 
 and satisfy the functional equations
$$E(x;\Phi,\lambda)=E(x;\Mint(s,\lambda)\Phi,s\lambda)\ptf$$
\end{theorem}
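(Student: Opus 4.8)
The plan is to deduce the continuation and the functional equations of $E(x;\Phi,\lambda)$ from the assumed properties of the intertwining operators, the two essential inputs being the constant term formula \ref{cstt} and the scalar product formula \ref{TrSc}. I would argue by induction on the semisimple rank of $\GG$, the base case $P=\G$, where $E(x;\Phi,\lambda)$ is just the cusp form $\Phi$, being trivial.

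First I would continue the constant terms. By Lemma \ref{cstt} the constant term $E_Q(x;\Phi,\lambda)$ along a proper standard parabolic $Q$ is a finite sum of Eisenstein series $E^Q(x;\Mint(s,\lambda)\Phi,s\lambda)$ on the Levi subgroup $M_Q$, whose only non elementary ingredients are the operators $\Mint(s,\lambda)$; these are holomorphic in $\mathcal O$ by hypothesis. Since $M_Q$ has strictly smaller semisimple rank and its intertwining operators are components of those of $\GG$, the induction hypothesis gives the holomorphic continuation of each $E^Q$ to $\mathcal O$, hence of every proper constant term $E_Q(x;\Phi,\lambda)$, locally uniformly in $x$. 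I would then recover $E$ itself from the definition of the truncation operator: the term indexed by $P=\G$ in $\trunc E$ equals $E$, so that on any compact set, for $T$ far from the walls,
\[
E(x;\Phi,\lambda)=\trunc E(x;\Phi,\lambda)-\sum_{Q\subsetneq\G}(-1)^{a_Q-a_G}
\sum_{\gamma\in\Gamma_Q\bs\Gamma_G}\hat\tau_Q(H_Q(\gamma x)-T)\,E_Q(\gamma x;\Phi,\lambda)\ptf
\]
The proper constant terms on the right already continue, so continuing $E$ reduces to continuing the single truncated series $\trunc E(x;\Phi,\lambda)$.

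This last step is where the scalar product formula \ref{TrSc} enters, and I expect it to be the main obstacle. In the convergence domain the map $\lambda\mapsto\trunc E(\bullet;\Phi,\lambda)$ takes values in $\LMC$ and is holomorphic there, and \ref{TrSc} identifies the scalar products $<\trunc E(\bullet;\Phi,\lambda),\trunc E(\bullet;\Psi,\mu)>_\LMC$ with the explicit expression $<\omega_{Q|P}^T(\lambda,-\overline\mu)\Phi,\Psi>$, which is meromorphic on all of $\mathcal O$ once the $\Mint(s,\lambda)$ are. The plan is to combine the holomorphic continuation of these scalar products with the local uniform bound on $||\trunc E(\bullet;\Phi,\lambda)||_\LMC$ furnished by the same formula, so as to force the $\LMC$-valued map $\lambda\mapsto\trunc E(\bullet;\Phi,\lambda)$ to extend holomorphically to $\mathcal O$. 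The delicate analytic points, which the present survey passes over, are the passage from weak to strong holomorphy in $\LMC$ and the cancellation of the apparent singularities carried by the factors $\varepsilon_R$ in $\omega_{Q|P}^T$ against the contribution of the proper constant terms, so that the reconstructed $E(x;\Phi,\lambda)$ is genuinely holomorphic on $\mathcal O$.

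Finally I would establish the functional equations, granting the continuation. For $s\in\weyl(\ap,\aq)$ set $D(x)=E(x;\Phi,\lambda)-E(x;\Mint(s,\lambda)\Phi,s\lambda)$. Computing each constant term by Lemma \ref{cstt} and using the assumed functional equation $\Mint(ts,\lambda)=\Mint(t,s\lambda)\Mint(s,\lambda)$, the substitution $t\mapsto ts$ in the relevant Weyl sets would show that the two Eisenstein series have identical constant terms along every standard parabolic subgroup. Then $D$ would be an automorphic form all of whose constant terms vanish, i.e. a cusp form, hence rapidly decreasing and square integrable. On the other hand $D$ is a difference of Eisenstein series, and Lemma \ref{efc} continues analytically: for a fixed rapidly decreasing cusp form the pairing with $E$ is holomorphic in $\lambda$ and vanishes on the convergence domain, hence vanishes on $\mathcal O$. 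So $D$ would be a cusp form orthogonal to all cusp forms, whence $D=0$ and the functional equation follows.
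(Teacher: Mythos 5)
Your proposal is correct in its overall strategy and shares the paper's two essential engines: the scalar product formula \ref{TrSc}, which continues $\lambda\mapsto\trunc E(\bullet;\Phi,\lambda)$ as an $\LMC$-valued holomorphic function wherever $\omega_{P|P}^T(\lambda,-\overline\lambda)$ is holomorphic, and the final argument that the difference $E(x;\Phi,\lambda)-E(x;\Mint(s,\lambda)\Phi,s\lambda)$ has vanishing constant terms by \ref{cstt}, hence is cuspidal, yet is orthogonal to cusp forms by \ref{efc}, hence zero. Where you diverge is in recovering $E$ from $\trunc E$. You invert the truncation formula and continue the proper constant terms $E_Q$ by induction on the semisimple rank; the paper instead pairs against a smooth compactly supported test function $\vf$ and invokes \ref{ident}(i), which says that for $T$ large (depending on the support of $\vf$) one has $\int\vf E=\int\vf\,\trunc E$, so the $\LMC$-valued continuation of $\trunc E$ immediately yields the continuation of $E(\bullet;\Phi,\lambda)$ as a distribution --- no induction and no separate continuation of constant terms is needed at this stage. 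The paper then upgrades from distribution to smooth function by choosing $f$ with $I_{P,\lambda}(f)\Phi=\Phi$ and propagating the identity $E(\bullet;\Phi,\lambda)=E(\bullet;\Phi,\lambda)\star f$ from the convergence domain; this convolution device is exactly what resolves the ``weak versus strong holomorphy'' issue you flag but leave open, and is the one concrete ingredient missing from your sketch. Two smaller remarks: the cancellation of the apparent singularities of $\omega_{Q|P}^T$ coming from the factors $\varepsilon_R$ is internal to that expression (zeros of the numerator against the monomials in the denominator, cf.\ Propositions 1.10.4 and 5.3.3 of \cite{LW}), not a cancellation against the proper constant terms as you suggest; and your rank induction, while not needed for the continuation itself, is a reasonable way to make precise the step ``one checks that the constant terms are equal'' in the functional equation argument, where the continued constant term formula does involve Eisenstein series on Levi subgroups.
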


\begin{proof}
If we have at hand the functional equations for intertwining operators,
one can show
that the singularities arising from zeros of monomials 
 in the denominator of the formula defining $\omega_{Q|P}^T(\lambda,\mu)$
 are cancelled by zeros of the numerator and that
the only possible singularities arise
 from singularities of the intertwining operators $\Mint(\bullet,\bullet)$ 
 (cf. Propositions 1.10.4 and 5.3.3 of \cite{LW}).
Now, if $D$ is a holomorphic differential operator in the variable $\lambda$ the identity 
 \ref{TrSc} tells us that
$$||D\trunc E(\bullet,\Phi,\lambda)||_\LMC^2=D\overline D\,
<\omega_{P|P}^T(\lambda,-\overline\lambda)\Phi,\Phi>_\LMCP\ptf$$
This shows that if $\omega_{Q|P}^T(\lambda,-\overline\lambda)$ is given by a convergent Taylor series
in a neighbourghood of some point $\lambda_0$ then  same is true for the function 
with values in the Hilbert space $\LMC$:
$$\lambda\mapsto \trunc E(\bullet,\Phi,\lambda)\ptf$$
Since both  sides in the formula  \ref{TrSc} are known to be equal in the convergence domain they remain equal in $\mathcal O$. 
Now, if $\vf$ is a smooth compactly supported function on $X_G$
it follows from  \ref{ident} that   for $T$ large enough
$$ \int_{X_G}\vf(x)E(x;\Phi,\lambda)\,dx =\int_{X_G}\vf(x)\trunc E(x;\Phi,\lambda)\,dx\ptf$$
This implies that Eisenstein series 
when considered as distributions on $X_G$:
$$E(\bullet;\Phi,\lambda):\vf\mapsto\int_{X_G}\vf(x)E(x;\Phi,\lambda)\,dx$$
have a holomorphic continuation on $\mathcal O$.
Moreover, since $\Phi$ is automorphic, one may find a
compactly supported smooth function  $f$ on $G$ such that $I_{P,\lambda}(f)\Phi=\Phi$.
But, in the convergence domain, one has
$$E(x;I_{P,\lambda}(f)\Phi,\lambda)=E(x;\Phi,\lambda)\star  f$$
and we get for  $\lambda\in\mathcal O$ an equality of distributions:
$$E(\bullet;\Phi,\lambda)=E(\bullet;\Phi,\lambda)\star  f\ptf$$
This shows that the distributions obtained by
analytic continuation of Eisenstein series are in fact smooth functions.
Using the functional equation for intertwining operators and formula \ref{cstt}, one checks that 
 the constant terms of
 $$ E(x;\Phi,\lambda)\comm{and} E(x;\Mint(s,\lambda)\Phi,s\lambda)$$
 along any proper parabolic subgroup $Q$   are equal
and hence the difference 
$$E(x;\Phi,\lambda)-E(x;\Mint(s,\lambda)\Phi,s\lambda)$$
is cuspidal. But, at the same time, each term is orthogonal to cusp forms
as follows from  \ref{efc}.
The difference must vanish.  This establishes the functional equation for Eisenstein
series in the domain where they are analytic.
\end{proof}

The  theorem  \ref{equa} reduces the proof of the analytic continuation and the functional equations
of Eisenstein series built from cusp forms to the proof of the same properties for intertwining operators.
That the analytic continuation of Eisenstein series yields automorphic forms needs a little more work.

\subsection{Analytic continuation:  cuspidal case}

Consider the case where $\PMC$ is an association class of
maximal standard parabolic subgroups.  
There are two cases: either $\PMC$ has one element $P$ and
$W(\ap,\ap)$ has two elements then we put $P_1=P$
or $\PMC$ has two elements  $ P=P_1\ne P_2=Q$ and
 $W(\ap,\aq)$ has one element. Let 
$$\lambda_{P_i}(z)=z\frac{\alpha_{P_i}}{||\alpha_{P_i}||}  \com{with} z\in\CM$$
 if we denote by $\alpha_{P_i}$
the unique positive root in $\Delta_{P_i}$. Now
 define $\Mint(z)$ by $$\Mint(z)=\Mint(s,\lambda_P(z))$$ when $\PMC$ has one element and
$s$ is the  non trivial element in  $W(\ap,\ap)$
or
$$\Mint(z)=\begin{pmatrix}0&\Mint(s\mun,\lambda_Q(z))\cr\Mint(s,\lambda_P(z))&0\cr
\end{pmatrix}$$
if $s$ belongs to $W(\ap,\aq)$ with $Q\ne P$.
Let  $I=\{1\}$  or $I=\{1,2\}$ according to cases.
Define $$\LMC_\PMC=\bigoplus_{i\in I}\LMC_{P_i}\ptf$$ 
We have to study $\Mint(z)$ and Eisenstein series 
$$E(\bullet,\Phi,z)=\sum_{i\in I} E(\bullet,\Phi_i,z)\ptf$$

\begin{proposition}\label{rkone}
\pni  (i) The functions $\Mint(z)$ and $E(\bullet,\Phi,z)$ are meromorphic for $z\in\CM$.
\pni  (ii)   $\Mint(z)\Mint(-z)=1$
\pni  (iii)   $E(\bullet,\Phi,z)=E(\bullet,\Mint(z)\Phi,-z)$
\end{proposition}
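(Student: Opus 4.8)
The plan is to prove the meromorphic continuation and functional equations for maximal parabolic subgroups by reducing to the rank-one situation, exploiting the scalar product formula for truncated Eisenstein series (Theorem \ref{TrSc}) as the analytic engine. Since $\PMC$ consists of maximal parabolics, the space $\ap/\ag$ is one-dimensional and the spectral parameter becomes a single complex variable $z$, so the operator $\omega_{Q|P}^T$ reduces to a finite sum of terms of the form $e^{<s\lambda-t\mu,T>}\varepsilon_R(s\lambda-t\mu)\Mint(t,-\overline\mu)^*\Mint(s,\lambda)$ involving only two root directions. First I would assemble the $2\times 2$ (or $1\times 1$) matrix $\Mint(z)$ and the combined Eisenstein series, and record that in the convergence domain $\Re(z)$ large the integral $\int_{X_G}\trunc E(x,\Phi,z)\overline{\trunc E(x,\Psi,\mu)}\,dx$ equals $<\omega_{Q|P}^T(z,-\overline\mu)\Phi,\Psi>$ with both sides visibly holomorphic there.

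The core analytic step is a functional-analytic continuation argument of the type carried out in Theorem \ref{equa}. Because $\trunc$ is a self-adjoint idempotent on $\LMC$ (Proposition \ref{ident}(ii)) and $\trunc E$ is rapidly decreasing hence square-integrable, the norm $||\trunc E(\bullet,\Phi,z)||_\LMC^2$ is governed by $<\omega_{P|P}^T(z,-\overline z)\Phi,\Phi>$. In rank one the denominator monomials $<\lambda,\check\alpha>$ have simple explicit zeros, so the singularities of $\omega^T$ are controlled: I would argue that $z\mapsto\trunc E(\bullet,\Phi,z)$ extends as a meromorphic $\LMC$-valued function, with poles only where $\omega^T$ (equivalently $\Mint$) is singular. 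The key point is that a diagonal entry of $\omega^T$ provides, through the identity $||D\trunc E||^2=D\overline D<\omega^T_{P|P}(z,-\overline z)\Phi,\Phi>$, a positive-definite control on all derivatives of $\trunc E$, so convergence of the Taylor series of $\omega^T$ at a point forces convergence of the Taylor series of $\trunc E$ there; this is exactly the mechanism that upgrades scalar meromorphy to Hilbert-space-valued meromorphy. Meromorphic continuation of $\Mint(z)$ itself in the single variable $z$ is obtained in parallel from the same formula, since the matrix entries of $\omega^T$ are built from $\Mint$ and are being continued.

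Once meromorphic continuation of $\trunc E$ is in hand, I would pass from $\trunc E$ back to $E$ via the identity of Theorem \ref{equa}: for $\vf$ smooth compactly supported and $T$ large, $\int\vf E(\bullet,\Phi,z)=\int\vf\,\trunc E(\bullet,\Phi,z)$, which continues $E$ meromorphically as a distribution, and the convolution trick $E(x;I_{P,z}(f)\Phi,z)=E(x;\Phi,z)\star f$ shows the continued distribution is a smooth (automorphic) function, giving (i). For (ii), the relation $\Mint(z)\Mint(-z)=1$ is first verified in the convergence domain from the functional equation $\Mint(st,\lambda)=\Mint(s,t\lambda)\Mint(t,\lambda)$ of Theorem \ref{eiscont}(iii) applied to the pair $(s,s\mun)$ together with the unitarity/adjoint relation \ref{eiscont}(iv), and then extends everywhere by meromorphy. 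For (iii), I would follow the constant-term argument of Theorem \ref{equa}: using Lemma \ref{cstt} and (ii) one checks that $E(\bullet,\Phi,z)$ and $E(\bullet,\Mint(z)\Phi,-z)$ have equal constant terms along every proper $Q$, so their difference is cuspidal; but each is orthogonal to cusp forms by Lemma \ref{efc}, forcing the difference to vanish.

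The main obstacle I anticipate is establishing the meromorphy of $\trunc E$ (and hence of $\Mint$) rigorously rather than formally: the identity $||D\trunc E||^2=D\overline D<\omega^T\Phi,\Phi>$ gives control along the real-analytic directions but one must argue that the positivity of these norm bounds genuinely propagates the domain of convergence of the Taylor series from the scalar quantity to the vector-valued function, and that the candidate poles coming from the $\varepsilon_R$ denominators either cancel against numerator zeros or are exactly the poles of $\Mint$. In higher rank this cancellation is delicate, but for maximal parabolics the one-variable situation keeps it elementary, which is precisely why the cuspidal maximal case is the right base for the induction.
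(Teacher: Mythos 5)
There is a genuine gap at the heart of your argument: the mechanism you propose for continuing $\Mint(z)$ is circular. The identity
$$||D\trunc E(\bullet,\Phi,z)||_\LMC^2=D\overline D\,<\omega_{P|P}^T(z,-\overline z)\Phi,\Phi>$$
only transfers analyticity \emph{from} $\omega^T$ (hence from $\Mint$) \emph{to} the $\LMC$-valued function $\trunc E$; it cannot produce the continuation of $\Mint$ itself, since every entry of $\omega^T$ is built out of $\Mint$. Saying that $\Mint(z)$ is ``obtained in parallel from the same formula, since the matrix entries of $\omega^T$ are being continued'' assumes the conclusion. Likewise Theorem \ref{equa} explicitly takes the holomorphy and functional equations of the intertwining operators as a hypothesis, so it cannot be the engine that continues them. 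What is missing is an \emph{independent} source of analyticity. In the paper this is supplied by a resolvent argument: one introduces the densely defined essentially self-adjoint operator $A\Ei_\phi=\Ei_{\phi'}$ with $\Phi'(\bullet,\lambda)=<\lambda,\lambda>\Phi(\bullet,\lambda)$, whose resolvent ${\mathbf R}(\sigma,A)=(\sigma-A)\mun$ is holomorphic off a real half-line by abstract spectral theory; the scalar product formula \ref{scp} expresses $<{\mathbf R}(t^2,A)\Ei_\phi,\Ei_\psi>$ as a contour integral involving $\Mint(z)$, and a contour shift isolating the residue at $z=t$ transfers the holomorphy of the resolvent to $\Mint(t)$ on $\{\Re(t)>0,\ t\notin\,]0,||\rho_P||]\}$. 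The positivity coming from \ref{TrSc} enters only afterwards, to push the continuation onto the line $\Re(t)=0$ and to prove $\Mint(t)\Mint(-t)=I$ there (together with boundedness estimates the paper declines to reproduce), and the full plane is then reached by symmetry.

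A second, related error: you claim $\Mint(z)\Mint(-z)=1$ can be ``first verified in the convergence domain'' from $\Mint(st,\lambda)=\Mint(s,t\lambda)\Mint(t,\lambda)$. For the pair $(s,s\mun)$ the lengths do not add ($\ell(e)=0\ne 2\ell(s)$), and the convergence domains of $\Mint(s,\lambda)$ and $\Mint(s\mun,s\lambda)$ are opposite chambers, so the two factors are never simultaneously given by convergent integrals. The identity (ii) genuinely lives on the critical line and must be extracted from the positivity of the hermitian form
$$\frac{1}{2\Re(t)}\bigl(I-\Mint(t)\Mint(t)^*\bigr)-\frac{1}{2i\Im(t)}\bigl(\Mint(t)-\Mint(t)^*\bigr)$$
as $\Re(t)\to 0^+$, which is how the paper proceeds. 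Your treatment of (iii) via constant terms and orthogonality to cusp forms is fine once (i) and (ii) are in place, and your anticipated ``main obstacle'' correctly senses that the Taylor-series propagation is the delicate point --- but the resolution is not a refinement of that mechanism; it is the introduction of the resolvent, which your proposal does not contain.
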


\begin{proof} We sketch an argument borrowed from sections 4 and 6 of \cite{LB2} (see also Lemma 84 in \cite{H}).
Let $r=||\rho_P||$.
Given $\Phi_i$ and $\Psi_i$ in $\LMC_{P_i}$ we define
functions of $z$ with values  in $\LMC_\PMC$
$$\Phi(\bullet,z)=\oplus \Phi_i(\bullet,\lambda_{P_i}(z))
\com{and}\Psi(\bullet,z)=\oplus \Psi_i(\bullet,\lambda_{P_i}(z))\ptf$$ 
 The scalar product of pseudo-Eisenstein series $\Ei_{\phi}$ and $\Ei_{\psi}$ 
attached to $\Phi(\bullet,z)$ and $\Psi(\bullet,z)$ is given by:
$$<\Ei_{\phi},\Ei_{\psi}>_\LMC
=\int_{c+i\RM}<\Phi(\bullet,z),\Psi(\bullet,-\overline z)>_{\LMC_\PMC}
+<\Mint(z)\Phi(\bullet,z),\Psi(\bullet,\overline z)>_{\LMC_\PMC}\,dz$$
provided $c>r$. Here $dz$ is a suitably normalized Haar measure
on $i\RM$.  Consider functions $\Ei_\phi$ with $\phi_i\in\DMC_{P_i}(R)$.
The above formula holds for $$r<c<R\ptf$$
Put $\Phi'_i(\bullet,\lambda)=<\lambda,\lambda>\Phi_i(\bullet,\lambda)$ then, if we define $\Ei_{\phi'}$ using the $\Phi'_i$,
the densely defined linear operator  $A\Ei_\phi=\Ei_{\phi'}$ 
is  essentially self adjoint (unbounded).
Its spectrum is real and its resolvent
$${\mathbf R}(\sigma,A)=(\sigma-A)\mun$$
is a holomorphic function of $\sigma\in\CM$ off the interval $]-\infty,R^2]$.
But since $R$ can be arbitrarily close to $r$ the resolvent is holomorphic off the interval $]-\infty,r^2]$
and $$<{\mathbf R}(\sigma,A)\Ei_{\phi},\Ei_{\psi}>_\LMC$$
equals $$\int_{c+i\RM}\frac{1}{\sigma-z^2}\left(<\Phi(\bullet,z),\Psi(\bullet,-\overline z)>_{\LMC_\PMC}
+<\Mint(z)\Phi(\bullet,z),\Psi(\bullet,\overline z)>_{\LMC_\PMC}\right)\,dz$$
provided $\Re(\sigma)>c^2>r^2$.
For $t\in\CM$, with $\Re(t)\ne \pm c$ while $r<c<R$,
 let
$$r(t,c)=\int_{c+i\RM}\frac{1}{t^2-z^2}\left(<\Phi(\bullet,z),\Psi(\bullet,-\overline z)>_{\LMC_\PMC}
+<\Mint(z)\Phi(\bullet,z),\Psi(\bullet,\overline z)>_{\LMC_\PMC}\right)\,dz\ptf$$
This is a holomorphic function of $t$  for $|\Re(t)|\ne c$ such that
$$r(t,c)=<{\mathbf R}(t^2,A)\Ei_{\phi},\Ei_{\psi}>_\LMC\com{if} \Re(t)>c>r\ptf$$
Suppose now that $$R>c_1>\Re(t)>c>r$$
then shifting the above  integral and evaluating the residue at $z=t$
  we get
$$<{\mathbf R}(t^2,A)\Ei_{\phi},\Ei_{\psi}>_\LMC=s(t)+r(t,c_1)$$ where
$$s(t)=\frac{1}{2t}\left(<\Phi(\bullet,t),\Psi(\bullet,-\overline t)>_{\LMC_\PMC}
+<\Mint(t)\Phi(\bullet,t),\Psi(\bullet,\overline t)>_{\LMC_\PMC}\right)\ptf$$
Hence $s(t)$  has a holomorphic continuation on the subset
$t^2\notin]-\infty,r^2]$.
Now suppose that
  $$\Phi(z)=e^{z^2}\Phi_0\com{and}\Psi(z)=e^{z^2}\Psi_0$$ with $\Phi_0$ and $\Psi_0$ in $\LMC_\PMC$,
  we get
  $$s(t)=\frac{e^{2t^2}}{2t}(<\Phi_0,\Psi_0>_{\LMC_\PMC}+<\Mint(t)\Phi_0,\Psi_0>_{\LMC_\PMC})$$
and hence $\Mint(t)$ has a holomorphic continuation whenever
$t^2\notin]-\infty,r^2]$ i.e. $\Re(t)>0$ and $t\notin]0,r]$.
 The positivity of the inner product  \ref{TrSc}  of a truncated Eisenstein series with itself
 tells us that the hermitian operator
 $$\frac{1}{2\Re(t)}(I-M(t){M(t)^*})
 -\frac{1}{2i\Im(t)}(M(t)-{M(t)^*})$$
 is positive. The continuation for $\Re(t)=0$ but $\!\!\Im(t)\ne0$ and the
functional equation 
$$\Mint(t)\Mint(-t)=I$$ on this line are 
 an easy consequence of this positivity provided one knows the operator remains bounded when
$t=iy+\epsilon$ with $y\in\RM-0$ and $\epsilon>0$ tends to $0$.
This fact will not be established here being too technical: it uses estimates
from section 5 of \cite{LES} (the case  $\GG=GL(2)$ is treated in the Appendix IV
of \cite{LES} pages 326-331. See also Lemma 98 in \cite{H}). It also remains to show that
only a finite number of singular points may show up in the segment $]0,||\rho_P||]$
(see for example sections 6, 7 and 8 of Chapter IV of \cite{H}). By symmetry
one gets the meromorphic continuation to the full complex line.
Meromorphic continuation and functional equation  for  Eisenstein series
now follows from \ref{equa}.
\end{proof}

We may now consider the case of arbitrary rank.
Recall that by definition $N_w=wN_P w\mun\cap N_Q\bs N_Q$.
Now if $s=s_1s_2$ is a factorization of  elements in the Weyl group with length  $\ell(s)=\ell(s_1)+\ell(s_2)$
and if $w_i$ represent the $s_i$, there is an isomorphism
$$N_{w_1}\times N_{w_2}\to N_w$$
which implies that $$M(s,\lambda)=M(s_1,s_2\lambda)M(s_2,\lambda)$$
in the domain of convergence (cf. \cite{Sc}).
Decomposing $s$ into a product of minimal length of simple reflexion  $s_i$ 
we decompose $\Mint(s,\lambda)$ into a product of intertwining operators $\Mint(s_i,\lambda_i)$
and the meromorphic continuation of intertwining operators
is reduced to the rank one case. For more details we refer to section 7 of \cite{LB2} and also 
\cite{Lai} where the case of quasi-split groups is fully treated.
Again, the meromorphic continuation of Eisenstein series induced from cusp forms
and their functional equations now follows from this and \ref{equa} (cf. section 9 of \cite{LB2}).

\subsection{Contour deformation forgetting residues}\label{forg}

Start with the expression \ref{scp}
for the square norm of a pseudo-Eisenstein series
$$<\Ei_\phi,\Ei_\phi>_\LMC=\sum_{s  \in \weyl(\ap,\ap)}\int_{\lambda\in\lambda_0+\iapg}
<\Mint(s,\lambda)\Phi(\bullet,\lambda),{\Phi(\bullet,-s\overline\lambda)}>_\LMCP\,d\lambda\ptf$$
Assume that  the integrand is sufficiently decreasing at infinity so that we may move the integration contour.
By moving $\lambda_0$ to $0$ following a path inside the Weyl chamber
 we may cross singular hyperplanes. Assume for a while that the residues coming from
these crossings vanish. In such a case we get, at least formally,
$$<\Ei_\phi,\Ei_\phi>_\LMC=\sum_{s  \in \weyl(\ap,\ap)}\int_{\lambda\in\iapg}
<\Mint(s,\lambda)\Phi(\bullet,\lambda),{\Phi(\bullet,-s\overline\lambda)}>_\LMCP\,d\lambda$$
which is again equal to
$$=\sum_{s  \in \weyl(\ap,\ap)}\int_{\lambda\in\iapg}
<\Mint(s,\lambda)\Phi(\bullet,\lambda),{\Phi(\bullet,s\lambda)}>_\LMCP\,d\lambda\ptf$$ 
This can be rewritten
$$=\frac{1}{ w(P)}\sum_{s  \in \weyl(\ap,\ap)}\sum_{t  \in \weyl(\ap,\ap)}\int_{\lambda\in\iapg}
<\Mint(s,t\lambda)\Phi(\bullet,t\lambda),{\Phi(\bullet,st\lambda)}>_\LMCP\,d\lambda$$ 
but since
$$\Mint(st,\lambda)=\Mint(s,t\lambda)\Mint(t,\lambda)$$
this is equal to
$$=\frac{1}{ w(P)}\sum_{u  \in \weyl(\ap,\ap)}\sum_{t  \in \weyl(\ap,\ap)}\int_{\lambda\in\iapg}
<\Mint(t,\lambda)\mun\Phi(\bullet,t\lambda),\Mint(u,\lambda)\mun{\Phi(\bullet,u\lambda)}>_\LMCP\,d\lambda\ptf$$ 
Now if we let
$$F_P(\lambda)=\sum_{s  \in \weyl(\ap,\ap)}\Mint(s,\lambda)\mun{\Phi}(\bullet,s\lambda)
$$
we get the 

\begin{proposition}\label{van}
Assume  that the residues coming from
the singularities vanish and that functions are sufficiently decreasing at infinity so that
we may move the integration contour. 
Then
$$<\Ei_\phi,\Ei_\phi>=\frac{1}{ w(P)}
\int_{\lambda\in \iapg}
<F_P(\lambda),{F_P(\lambda)}>\,d\lambda\ptf$$
\end{proposition}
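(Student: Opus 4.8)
The plan is to begin from the expression of Proposition~\ref{scp} for $<\Ei_\phi,\Ei_\phi>_\LMC$ and then to carry out a chain of exact algebraic identities, the sole analytic input being the contour shift granted in the hypothesis. First I would move the contour from $\lambda_0+\iapg$ to $\iapg$; since by assumption the residues picked up across the singular hyperplanes vanish and the integrand decays, this simply replaces $\lambda_0$ by $0$ and gives
$$<\Ei_\phi,\Ei_\phi>_\LMC=\sum_{s\in\weyl(\ap,\ap)}\int_{\lambda\in\iapg}<\Mint(s,\lambda)\Phi(\bullet,\lambda),\Phi(\bullet,-s\overline\lambda)>_\LMCP\,d\lambda\ptf$$
On $\iapg$ the variable $\lambda$ is purely imaginary, so $\overline\lambda=-\lambda$ and hence $-s\overline\lambda=s\lambda$; this is precisely where the passage to the unitary axis is exploited, and it turns the second slot into $\Phi(\bullet,s\lambda)$.

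Next I would symmetrize over the group $\weyl(\ap,\ap)$. For each fixed $t\in\weyl(\ap,\ap)$ the map $\lambda\mapsto t\lambda$ is a measure-preserving bijection of $\iapg$, so each summand is unchanged under averaging over $t$; inserting the factor $1/w(P)$ produces a double sum over $s$ and $t$ whose integrand is $<\Mint(s,t\lambda)\Phi(\bullet,t\lambda),\Phi(\bullet,st\lambda)>_\LMCP$.

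The decisive step is the cocycle relation \ref{eiscont}(iii), which I would use in the form $\Mint(s,t\lambda)=\Mint(st,\lambda)\Mint(t,\lambda)\mun$. Writing $u=st$ (for fixed $t$, the map $s\mapsto u=st$ is a bijection of $\weyl(\ap,\ap)$) and then moving $\Mint(u,\lambda)=\Mint(st,\lambda)$ across the inner product by its unitarity on $\iapg$ (part (iv) of \ref{eiscont}, so that $\Mint(u,\lambda)^*=\Mint(u,\lambda)\mun$), each term becomes
$$<\Mint(t,\lambda)\mun\Phi(\bullet,t\lambda),\Mint(u,\lambda)\mun\Phi(\bullet,u\lambda)>_\LMCP\ptf$$
The sums over $t$ and $u$ now decouple, and recognizing $F_P(\lambda)=\sum_{s}\Mint(s,\lambda)\mun\Phi(\bullet,s\lambda)$ in each slot delivers the asserted identity.

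The genuine difficulty is exactly the one the hypothesis sweeps away: justifying the contour shift. Every other move is an identity valid wherever the integrals converge, depending only on the invariance of Haar measure under the Weyl action, the functional equation, and unitarity along $\iapg$. Proving that the residues vanish and that the decay suffices for the shift is the hard analytic content, and is precisely what the meromorphic-continuation results established later must furnish.
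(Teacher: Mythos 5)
Your argument is correct and is essentially identical to the paper's own derivation in the paragraphs preceding the proposition: contour shift to $\iapg$, the observation that $-s\overline\lambda=s\lambda$ on the unitary axis, averaging over $\weyl(\ap,\ap)$ with the factor $1/w(P)$, the functional equation $\Mint(st,\lambda)=\Mint(s,t\lambda)\Mint(t,\lambda)$ combined with unitarity of $\Mint(u,\lambda)$ for $\lambda$ purely imaginary, and the decoupling of the double sum into $<F_P(\lambda),F_P(\lambda)>$. You make explicit the appeal to unitarity that the paper leaves implicit, but there is no substantive difference.
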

This is the expected contribution coming from $P$ to the spectral decomposition, 
as given in \ref{speca}, when residues do not contribute.

\subsection{The trivial representation as a residue}\label{triv}

The trivial representation will show up when deforming the contour giving the contribution
of the Eisenstein series induced from the constant function $\Phi$ on the 
minimal parabolic subgroup $P_0$ through the multiple residue at $\lambda=\rho$.

Assume   that $\GG$ is a split $\QM$-group and that $K=K_\infty\times\prod_p K_p$ where $K_p$ is hyperspecial
for all prime $p$.
Consider  series $\Ei_\phi$ where $\phi$ is right-$K$-invariant on $X_{P_0}$.
In the expression  \ref{scp} for the scalar product of such pseudo-Eisenstein series is
$$<\Ei_\phi,\Ei_\phi>_\LMC=\sum_{s  \in \weyl^G}\int_{\lambda\in\lambda_0+\iapgo}
<\Mint(s,\lambda)\Phi(\bullet,\lambda),{\Phi(\bullet,-s\overline\lambda)}>_\LMCPO\,d\lambda\ptf$$
The intertwining operators $\Mint(s,\lambda)$
 act on a constant function $\Phi$ by scalars
$$\mint(s,\lambda)=\prod_{\alpha>0,s\alpha<0}\frac{\Zeta(<\lambda,\check\alpha>)}
{\Zeta(1+<\lambda,\check\alpha>)}
$$
where $\Zeta(\sigma)$ is the complete Zeta function:
$$\Zeta(\sigma)=\pi^{-\sigma/2}\Gamma(\sigma/2)\zeta(\sigma)\,\,.$$
Recall that $L(\sigma)$ has a simple pole with residue 1 at $\sigma=1$ and functional equation 
$$\Zeta(\sigma)=\Zeta(1-\sigma)\ptf$$

Observe that the half sum of positive roots $\rho$
 is also the sum of fundamental weights (i.e. the basis dual to the basis of simple co-roots):
 $$\rho=\frac{1}{2}\sum_{\alpha>0}\alpha=\sum_{\alpha\in\Delta_{P_0}}\vp_{\alpha} \ptf
 $$
  Hence $\rho$ is the  intersection point of the affine hyperplanes $<\lambda,\check\alpha>=1$
where $\check\alpha$ runs over simple co-roots.
This implies that the multiple residue at  $\lambda=\rho$
is given by the term indexed by the longest element $s$ in the Weyl group i.e.
the element which sends any positive roots to a negative roots.
Let us denote by $${\prod_{\alpha>0}}'$$ the product over non-simple positive roots and define $V$ by
$$\frac{1}{V}=\frac{\prod'_{\alpha>0}\Zeta(<\rho,\check\alpha>)}
{\prod_{\alpha>0}\Zeta(1+<\rho,\check\alpha>)}\ptf$$
The multiple residue $\mathcal T$ at  $\lambda=\rho$ is given by
$$\mathcal T=\frac{1}{V}<\Phi(\bullet,\rho),\Phi(\bullet,\rho)>_{\LMC_{P_0}}\ptf$$
Now if we denote by $e_G$ the unit element in $G$ we have
$$<\Ei_\phi,1>_\LMC=\Phi(e_G,\rho)$$
if the measure on $G$ is given by $$e^{-<2\rho,H_{P_0}(x)>}dn\, dp\,dk$$
for $x=npk$  where  $dp\,dk$ is normalized so that $$\vol(X_{P_0})=\vol(K)=1$$  while
 $dn$ gives measure 1 to $\Gamma_{P_0}\cap{N_{P_0}}\bs\N_{P_0}$.
This implies that for such a measure on $G$ one has
$$<\Phi(\bullet,\rho),\Phi(\bullet,\rho)>_{\LMC_{P_0}}=|\Phi(e_G,\rho)|^2=|<\Ei_\phi,1>_\LMC|^2$$
and hence $$\mathcal T=\frac{<\Ei_\phi,1>_\LMC<1,\Ei_\phi>_\LMC}{V}\ptf$$
Using this and a resolvent argument Langlands shows in \cite{LB1} that 
$$\vol(X_G)=<1,1>_\LMC=V\ptf$$
In other words, if $\Psi_G$ is  a constant function of norm 1 in $\LMC$
$$\mathcal T=\frac{1}{V}<\Phi(\bullet,\rho),\Phi(\bullet,\rho)>_{\LMC_{P_0}}
=<\Ei_\phi,\Psi_G>_\LMC<\Psi_G,\Ei_\phi>_\LMC$$
and hence $\mathcal T$ is the contribution to the spectral decomposition
of the trivial representation.

This equation is the starting point for proving  Weil's conjecture which says that 
the  Tamagawa number $\tau(G)=1$  when  $G$ simply connected. The conjecture was 
proved by Langlands for split groups \cite{LB1}, and extended by
K.F. Lai to arbitrary quasi-split groups  \cite{Lai}. As Langlands had expected,
the proof of the general case, due to Kottwitz \cite{K}, reduces to Lai's result modulo 
the stabilization of the Trace Formula in a special case.

\subsection{The general case}

It remains to deal with the most difficult part of the proof: to take into account  
more general residues that show up when moving the contour. 
This is taken care of in Chapter 7 of \cite{LES} and in Chapters V and VI in \cite{MW2}.
A nice introduction is given
by J. Arthur in \cite{A1} and since we could not do any better we refer the reader to this survey.
Let us simply say that
besides serious analytic difficulties the main obstacles come from the following facts: 
one does not know {\it a priori} 
the location of singularities of the intertwining operators. In  the simplest cases
intertwining operators are quotients of Riemann Zeta functions and, if their poles 
are known, their zeros are mysterious and may produce singularities. 
In the general case this is worse since one has to control intertwining operators
which involve the most general $L$-functions attached to automorphic forms by Langlands and
very little is known about their singularities.  Moreover, as will be seen in the  $GL(3)$ example
(in the proof of \ref{vanC}), 
 many parasitic residues do cancel. A direct argument, 
like the one we shall use in examples,  would not be tractable in the general case and a subtle
induction process is necessary.

One has thus established two orthogonal decompositions: an easy one
 in \ref{amc}:
$$\LMC=\bigoplus_{\PMC\in\AMC}\CMC_\PMC$$
and a  much deeper one \ref{fmc}.
The various  projectors commutes with each other. Putting 
$$\GMC_\PMC^\QMC=\CMC_\PMC\cap\FMC_\QMC$$
we get a finer decomposition:
\begin{proposition}\label{fmcb}
$$\LMC=\bigoplus_{\PMC\in\AMC}\mskip 5mu
\bigoplus_{\{\QMC\in\AMC|\PMC<\QMC\}}\GMC_\PMC^\QMC\ptf$$
Here $\PMC<\QMC$ means that given $P\in\PMC$ there is a $Q\in\QMC$
such that $P\subset Q$.
\end{proposition}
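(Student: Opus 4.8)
The plan is to deduce the refinement from the two orthogonal decompositions already at hand, Propositions \ref{amc} and \ref{fmc}, once we know that the family $\{\CMC_\PMC\}$ and the family $\{\FMC_\QMC\}$ are compatible. Write $E_\PMC^\CMC$ for the orthogonal projector of $\LMC$ onto $\CMC_\PMC$ and $E_\QMC^\FMC$ for the orthogonal projector onto $\FMC_\QMC$; both are $\G$-equivariant, and by \ref{amc} and \ref{fmc} each family consists of mutually orthogonal projectors summing to the identity. If we can show that the two families commute, then each product $E_\PMC^\CMC E_\QMC^\FMC$ is again an orthogonal projector, with range $\CMC_\PMC\cap\FMC_\QMC=\GMC_\PMC^\QMC$; these products are mutually orthogonal and sum to the identity, whence $\LMC=\bigoplus_{\PMC,\QMC}\GMC_\PMC^\QMC$, the sum running a priori over all pairs.

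The heart of the matter is therefore to establish, for each $\QMC$, that $\FMC_\QMC$ is \emph{aligned} with the decomposition \ref{amc}, i.e.
$$\FMC_\QMC=\bigoplus_{\PMC\in\AMC}\bigl(\FMC_\QMC\cap\CMC_\PMC\bigr),$$
with nonzero summands only for $\PMC<\QMC$. To see this I would apply the whole machinery to the Levi factor $M_Q$ in place of $\G$. By Theorem \ref{disc} for $M_Q$, the discrete spectrum $L^2_{\mathrm{disc}}(X_Q)$ is generated by automorphic forms appearing as residues of $M_Q$-Eisenstein series built from cuspidal data on Levi subgroups $M_P$ of $M_Q$, that is, from standard parabolic subgroups $P\subseteq Q$; grouping these by their cuspidal support gives a decomposition of $L^2_{\mathrm{disc}}(X_Q)$ indexed by $M_Q$-association classes of such $P$. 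Forming $\FMC_Q(\lambda)=\I_Q^\G(L^2_{\mathrm{disc}}(X_Q),\lambda)$ and integrating over the imaginary parameters, transitivity of parabolic induction (equivalently of Eisenstein series, already used through $\Mint(s,\lambda)=\Mint(s_1,s_2\lambda)\Mint(s_2,\lambda)$) shows that the piece coming from cuspidal data on $M_P$ has $\G$-cuspidal support the $\G$-association class $\PMC$ of $P$, hence lies in $\CMC_\PMC$. Since $M_P\subseteq M_Q$ we have $\PMC<\QMC$, which gives the displayed alignment together with the vanishing of $\FMC_\QMC\cap\CMC_\PMC$ for $\PMC\not<\QMC$.

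Granting the alignment, the commutation of the two families of projectors is immediate: since the same argument applies to every $\QMC'$, the projector $E_\PMC^\CMC$ preserves each $\FMC_{\QMC'}$ and therefore also preserves the orthogonal complement $\bigoplus_{\QMC'\neq\QMC}\FMC_{\QMC'}$ of $\FMC_\QMC$; a self-adjoint projector preserving both the range and the kernel of $E_\QMC^\FMC$ commutes with it. With commutation in hand the formal refinement of the first paragraph applies, and the alignment shows that $\GMC_\PMC^\QMC=\CMC_\PMC\cap\FMC_\QMC$ vanishes unless $\PMC<\QMC$, so the double sum collapses to pairs with $\PMC<\QMC$, as stated.

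I expect the main obstacle to be the alignment step, and precisely the claim that taking residues of Eisenstein series and then inducing parabolically does not alter the cuspidal support, i.e. that the part of $\FMC_\QMC$ built from cuspidal data on $M_P$ genuinely lands inside the closed $\G$-invariant space $\CMC_\PMC$. This rests on the full force of Theorem \ref{disc} applied to each Levi $M_Q$ and on the compatibility of the constant-term computation \ref{cstt} with the transitivity of Eisenstein series; the purely Hilbert-space bookkeeping around it (commuting projectors, collapse of the double sum) is routine.
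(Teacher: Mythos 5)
Your argument is correct and follows essentially the same route as the paper, which simply combines the two orthogonal decompositions \ref{amc} and \ref{fmc}, asserts that the corresponding projectors commute, and intersects. Your ``alignment'' step supplies the justification for that commutation (and for the restriction to pairs with $\PMC<\QMC$) which the paper only states informally in the remark following the proposition, namely that each constituent of $\GMC_\PMC^\QMC$ is induced from a residue of cuspidal data on $P\in\PMC$.
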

A representation occuring in $\GMC_\PMC^\QMC$ is induced from a 
representation in the discrete spectrum of $Q\in\QMC$ which in turn appears 
via a residue from an induced representation of a cuspidal representation of $P\in\PMC$
(i.e. a representation of the cuspidal spectrum on $X_P$).

\section{Examples of spectral decomposition}

When $\GG=SL(2)$  the spectral decomposition is due to Selberg \cite{S1}
(see [Ku] for a detailed treatment in the classical language).
A treatment in the adelic setting is given in \cite{God}.
For the quasi-split form of $U(3)$ attached to a quadratic extension $E/F$,
the non-cuspidal discrete spectrum is described in section 13.9 of \cite{Rog}.

The spectral decomposition for $GL(n)$ when $n=2$ or $3$,  restricted to the $K$-invariant vectors,
is treated below following section 10 of \cite{LB2}.
 We refer to appendix III of \cite{LES} 
for a similar study %of the spectral decomposition restricted to the $K$-invariant vectors 
for groups $G_2$ and $SL(4)$ where new intrinsic difficulties show up.

\subsection{Some notation}\label{gln}

%Now $\GG=GL(n)$. Here we take $K_p=GL(n,\ZM_p)$ for all prime $p$.
Let $\LMC^K$ be the space of $K$-invariant functions
in $\LMC$.  This is a representation space for the spherical Hecke algebra i.e.
the convolution algebra of compactly supported left and right $K$-invariant functions.
This is a commutative algebra and hence irreducible representations have dimension 1.
Proposition \ref{amc} yields a first decomposition 
$$\LMC^K=\sum_{\PMC\in\AMC}\CMC_{\PMC}^K$$
where $\CMC_{\PMC}^K$ is the space of $K$-invariant functions in $\CMC_{\PMC}$.
This is the closure of the space generated by pseudo-Eisenstein series 
constructed from $K$-invariant cuspidal functions $$x\mapsto\Phi(x,\lambda)$$ 
on $X_{P}$
 for $P\in\PMC$. 
When $\PMC=\{P_0\}$ one has
 $$X_{P_0}=\APO\Gamma_{P0} N_{P_0}\bs G=P_0\bs G$$ and
 $G=P_0 K$. In such a case functions  $x\mapsto\Phi(x,\lambda)$ are simply constant on $G$ and
the scalar product in $\LMCPO$ is given by
$$<\Phi(\bullet,\lambda),\Phi(\bullet,\overline\lambda)>_\LMCPO=\Phi(e_G,\lambda)\overline{\Phi(e_G,\overline\lambda)}\ptf$$
 The intertwining operators $\Mint(s,\lambda)$ act on such functions by scalars 
 $$\mint(s,\lambda)=\prod_{\alpha>0,s\alpha<0}\frac{\Zeta(<\lambda,\check\alpha>)}
{\Zeta(1+<\lambda,\check\alpha>)}\ptf$$

\subsection{The case $\GG=GL(2)$}

We now consider $\GG=GL(2)$.
The minimal parabolic subgroup $P_0$ can be taken to be the subgroup of upper
triangular matrices 
$$P_0=\begin{pmatrix}\star&\star\cr0&\star\cr
\end{pmatrix}\ptf$$
 In $G_\infty$ the maximal compact subgroup $K_\infty=O(2,\RM)$ while in $G_p$ we take 
$K_p=GL(2,\ZM_p)$
where $\ZM_p$ is the ring of $p$-adic integers.
When $s$ is the non trivial element in the Weyl group
 $$\mint(s,\lambda)=\frac{\Zeta(<\lambda,\check\alpha>)}
{\Zeta(1+<\lambda,\check\alpha>)}\ptf$$
We start from 
$$<\Ei_\phi,\Ei_\phi>_\LMC=\int_{\lambda\in\lambda_0+\Lambda_{P_0}}
\Phi(e_G,\lambda)
\overline{{\Phi(e_G,-\overline\lambda)}}+\mint(s,\lambda)\Phi(e_G,\lambda)
\overline{{\Phi(e_G,-s\overline\lambda)}}
\,d\lambda\ptf$$
Moving the contour from $\lambda_0$ to $0$ we have to take into account the residue
of $\mint(s,\lambda)$ at $\lambda=\rho=\alpha/2$ and
we get
$$<\Ei_\phi,\Ei_\phi>_\LMC=\int_{\lambda\in\Lambda_{P_0}}
\left(\Phi_0(\lambda)
\overline{{\Phi_0(\lambda)}}+\mint(s,\lambda)\Phi_0(\lambda)
\overline{{\Phi_0(s\lambda)}}\right)
\,d\lambda+\frac{1}{\Zeta(2)}\Phi_0(\rho)\overline{\Phi_0(\rho)}
$$
where
$$\Phi_0(\lambda)=\Phi(e_G,\lambda) \ptf$$
Here $\Lambda_{P_0}$ is of dimension 1. This yields the
\begin{proposition} For $\GG=GL(2)$ the spectral decomposition
of the space generated by $K$-invariant pseudo-Eisenstein series on $P_0$ is given by:
$$<\Ei_\phi,\Ei_\phi>_\LMC=\frac{1}{ 2}\int_{\lambda\in \Lambda_{P_0}}
<\Ei_\phi,E(\bullet;\Psi_0,\lambda)>_\LMC
\overline{<\Ei_\phi,E(\bullet;\Psi_0,\lambda)>_\LMC}\,d\lambda\mskip150mu$$
$$\mskip 350mu
+<\Ei_\phi,\Psi_G>_\LMC<\Psi_G,\Ei_\phi>_\LMC\ptf$$
Here $\Psi_0$ is a constant function on $G$ normalized so that $<\Psi_0,\Psi_0>_{\LMC_{P_0}}=1$
and
$\Psi_G$ is a constant function on $G$ normalized so that $<\Psi_G,\Psi_G>_\LMC=1$.
The orthogonal complement of $\CMC_{P_0}^K$ in $\LMC^K$ is $\CMC_G^K=\LMCC^K$.
\end{proposition}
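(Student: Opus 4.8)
The plan is to start from the identity displayed just above the statement, obtained by shifting the contour from $\lambda_0+\Lambda_{P_0}$ to $\Lambda_{P_0}$:
$$<\Ei_\phi,\Ei_\phi>_\LMC=\int_{\lambda\in\Lambda_{P_0}}\left(\Phi_0(\lambda)\overline{\Phi_0(\lambda)}+\mint(s,\lambda)\Phi_0(\lambda)\overline{\Phi_0(s\lambda)}\right)d\lambda+\frac{1}{\Zeta(2)}\Phi_0(\rho)\overline{\Phi_0(\rho)}\ptf$$
and to match its two pieces, the continuous integral and the residue, with the two summands of the claimed formula. All that remains is bookkeeping: rewriting the continuous integrand as a squared modulus and recognizing the residue as the trivial-representation term of \ref{triv}.

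For the continuous part I would first evaluate the coefficient $<\Ei_\phi,E(\bullet;\Psi_0,\lambda)>$. In this $K$-invariant rank-one situation every cuspidal function on $P_0$ is a scalar multiple of the norm-one constant function $\Psi_0$ and $\Mint(s,\lambda)$ acts by the scalar $\mint(s,\lambda)$; unfolding by \ref{fc} against the constant term of the Eisenstein series, computed by \ref{cstt}, gives
$$<\Ei_\phi,E(\bullet;\Psi_0,\lambda)>=\Phi_0(\lambda)+\mint(s,\lambda)\mun\Phi_0(s\lambda)\ptf$$
(equivalently this is $<F_{P_0}(\lambda),\Psi_0>_\LMCPO$ for the function $F_{P_0}(\lambda)=\sum_{s}\Mint(s,\lambda)\mun\Phi(\bullet,s\lambda)$ of \ref{forg}). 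I would then expand $|<\Ei_\phi,E(\bullet;\Psi_0,\lambda)>|^2$ into four terms. For $\lambda\in\Lambda_{P_0}$ the scalar $\mint(s,\lambda)$ is unitary (\ref{eiscont}(iv)), so $\mint(s,\lambda)\mun=\overline{\mint(s,\lambda)}$; this turns the diagonal contribution into $|\Phi_0(\lambda)|^2+|\Phi_0(s\lambda)|^2$. Substituting $\lambda\mapsto s\lambda$ (the measure on $\Lambda_{P_0}$ is $\weyl$-invariant) and invoking the functional equation $\mint(s,s\lambda)=\mint(s,\lambda)\mun$ of \ref{rkone}(ii), the two diagonal terms become equal and the two cross terms become equal after integration. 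Hence $\int_{\Lambda_{P_0}}|<\Ei_\phi,E(\bullet;\Psi_0,\lambda)>|^2\,d\lambda$ is exactly twice the continuous integral above, which produces the factor $\tfrac12$.

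For the residue I would appeal to \ref{triv}. For $\GG=GL(2)$ there is a single positive root and it is simple, so the product over non-simple positive roots in the definition of $V$ is empty and $V=\Zeta(1+<\rho,\check\alpha>)=\Zeta(2)$; the residue $\frac{1}{\Zeta(2)}\Phi_0(\rho)\overline{\Phi_0(\rho)}$ is then the quantity $\mathcal T=\frac1V<\Phi(\bullet,\rho),\Phi(\bullet,\rho)>_\LMCPO$ of that section. The argument there, which rests on $\vol(X_G)=V=\Zeta(2)$, identifies $\mathcal T$ with $<\Ei_\phi,\Psi_G>_\LMC<\Psi_G,\Ei_\phi>_\LMC$, the second summand. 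Finally, the statement on the orthogonal complement is immediate from \ref{amc}: for $GL(2)$ the only associate classes of standard parabolic subgroups are $\{P_0\}$ and $\{G\}$, so $\LMC^K=\CMC_{P_0}^K\oplus\CMC_G^K$, and $\CMC_G=\DMC_G=\LMCC$ gives $\CMC_G^K=\LMCC^K$.

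The main obstacle does not really lie in this proposition but upstream: in the analytic justification of the contour shift and in the exact evaluation of the residue as $1/\Zeta(2)$, and above all in the input $\vol(X_G)=\Zeta(2)$ coming from Langlands' resolvent argument in \ref{triv}. Granting the displayed post-shift identity, the only genuine point here is the elementary symmetrization over $\weyl(\apo,\apo)$ yielding the factor $\tfrac12$; everything else is a matter of matching normalizations.
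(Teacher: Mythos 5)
Your argument is correct and follows exactly the route the paper intends: the contour shift producing the displayed identity, the identification of the continuous part with $\tfrac12\int|\langle\Ei_\phi,E(\bullet;\Psi_0,\lambda)\rangle|^2\,d\lambda$ via the symmetrization over $\weyl(\apo,\apo)$ and the unitarity and functional equation of $\mint(s,\lambda)$ (this is the $w(P)=2$ instance of \ref{van}), and the identification of the residue at $\lambda=\rho$ with the trivial-representation term via $V=\vol(X_G)=\Zeta(2)$ as in \ref{triv}. The concluding remark on the orthogonal complement via \ref{amc} is likewise exactly the paper's reasoning, so there is nothing to add.
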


\subsection{The case $\GG=GL(3)$. Preparatory material}
When  $\GG=GL(3)$
our minimal parabolic subgroup $P_0$ is the subgroup of upper
triangular matrices and there are two associated standard maximal parabolic subgroups $P_1$ and $P_2$
$$
P_0=\begin{pmatrix}\star&\star&\star\cr0&\star&\star\cr0&0&\star\cr
\end{pmatrix}\comm{}
P_1=\begin{pmatrix}\star&\star&\star\cr\star&\star&\star\cr0&0&\star\cr
\end{pmatrix}\comm{}
P_2=\begin{pmatrix}\star&\star&\star\cr0&\star&\star\cr0&\star&\star\cr
\end{pmatrix}
$$
The maximal compact subgroup  $K_\infty$ in $G_\infty$ is $O(3,\RM)$ while $K_p=GL(3,\ZM_p)\ptf$
 We denote by ${\alpha_1}$ and ${\alpha_2}$ the two simple roots
and by $\vp_1$ and $\vp_2$ the corresponding fundamental weights.
 Recall that $\rho$ the half sum of positive roots is such that
 $$\rho=\frac{1}{2}({\alpha_1}+{\alpha_2}+{\alpha_3})={\alpha_1}+{\alpha_2}
 ={\alpha_3}=\vp_1+\vp_2$$ and hence
 $$<\rho,\check{\alpha_1}>=<\rho,\check{\alpha_2}>=1\com{while}<\rho,\check\rho>=2\ptf$$
Besides  the three symmetries $s_i$ defined by $\alpha_i$ 
the two other non trivial elements in the Weyl group are rotations
$$r_1=s_1 s_2\com{and}r_2=s_2 s_1\ptf$$

Let $\delta_i=\frac{1}{2}\alpha_i$ and denote by $\sigma_{ij}$ the unique element in the Weyl group which
sends $\delta_i$ to $-\delta_j$. Consider vectors so that the angle between $\delta_i$ and $e_i$ is $\pi/2$:
$$e_1=-\vp_2\com{,}e_2=\vp_1\com{and}e_3=z(\vp_1-\vp_2)\ptf$$
We shall need the:
\begin{lemma}\label{eij} When
 $ \lambda_i=\delta_i+ze_i$ with $\overline z=-z\in\CM$ we have
$$\overline{-\sigma_{ij}(\lambda_i)}=\lambda_j$$
\end{lemma}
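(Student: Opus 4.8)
The plan is to strip the spectral variable $z$ out of the identity, reducing it to a single real statement about the vectors $e_i$ that is then checked by a finite computation with the Weyl group action. First I would expand the left hand side. The Weyl group acts $\RM$-linearly, so
$$\sigma_{ij}(\lambda_i)=\sigma_{ij}(\delta_i)+z\,\sigma_{ij}(e_i)=-\delta_j+z\,\sigma_{ij}(e_i),$$
where the second equality is the very definition of $\sigma_{ij}$ as the element sending $\delta_i$ to $-\delta_j$ (equivalently $\alpha_i$ to $-\alpha_j$). Thus $-\sigma_{ij}(\lambda_i)=\delta_j-z\,\sigma_{ij}(e_i)$. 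Since $\delta_j=\frac12\alpha_j$ and $e_i$ are real vectors and $\sigma_{ij}$ is a real operator, the vector $\sigma_{ij}(e_i)$ is real, and the hypothesis $\overline z=-z$ gives
$$\overline{-\sigma_{ij}(\lambda_i)}=\delta_j-\overline z\,\sigma_{ij}(e_i)=\delta_j+z\,\sigma_{ij}(e_i).$$
Comparing with $\lambda_j=\delta_j+z\,e_j$, the assertion becomes equivalent to the $z$-free identity $\sigma_{ij}(e_i)=e_j$. This is where the sign condition $\overline z=-z$ does its work: it exactly compensates the minus sign carried by $-\sigma_{ij}$.

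It then remains to verify $\sigma_{ij}(e_i)=e_j$ in each case. I would first pin down each $\sigma_{ij}$ among the non trivial Weyl elements $s_1,s_2,s_3,r_1,r_2$ by tracking the image of $\alpha_i$: the diagonal cases are the reflections, $\sigma_{ii}=s_i$; the two cases exchanging $\alpha_1$ and $\alpha_2$ are realized by $s_3=s_{\alpha_3}$; and the off-diagonal cases pairing index $3$ with $1$ or $2$ are realized by the rotations $r_1=s_1s_2$ and $r_2=s_2s_1$. I would then compute the action on the fundamental weights from $s_\alpha(v)=v-\langle v,\check\alpha\rangle\alpha$, using $\alpha_1=2\vp_1-\vp_2$, $\alpha_2=-\vp_1+2\vp_2$ and $\alpha_3=\vp_1+\vp_2$, and read off $\sigma_{ij}(e_i)$ for $e_1=-\vp_2$, $e_2=\vp_1$ and the vector $e_3$ proportional to $\vp_1-\vp_2$. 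Each of the finitely many cases is then an immediate check.

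The work here is entirely bookkeeping, and the only place demanding care is fixing the signs and the common normalization of the $e_i$; this, rather than any deep difficulty, is the real point. Two observations organize it. First, the stated right-angle condition $\delta_i\perp e_i$ must hold; it follows from the defining relations $\langle\vp_1,\check\alpha_1\rangle=\langle\vp_2,\check\alpha_2\rangle=1$ and $\langle\vp_1,\check\alpha_2\rangle=\langle\vp_2,\check\alpha_1\rangle=0$, which give in particular $(\alpha_3,\vp_1-\vp_2)=0$. Second, since every $\sigma_{ij}$ is an orthogonal transformation, the identity $\sigma_{ij}(e_i)=e_j$ can hold only if $e_1,e_2,e_3$ all have the same length, i.e. lie in one Weyl orbit; a one-line computation in the $A_2$ metric indeed yields $\|\vp_2\|=\|\vp_1\|=\|\vp_1-\vp_2\|$, which both vindicates the normalization and fixes the correct sign of $e_3$. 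With that sign in place, the case by case verification closes the lemma.
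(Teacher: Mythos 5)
Your proof is correct and takes essentially the same route as the paper: both reduce the identity, via $\sigma_{ij}(\delta_i)=-\delta_j$ and $\overline z=-z$, to the $z$-free statement $\sigma_{ij}(e_i)=e_j$, which is then checked over the finitely many Weyl elements. One small caveat: the norm computation $\|\vp_1\|=\|\vp_2\|=\|\vp_1-\vp_2\|$ fixes only the magnitude, not the sign, of the constant in $e_3$; the sign is what the case-by-case check itself determines (one finds $e_3=-(\vp_1-\vp_2)$, since e.g.\ $\sigma_{13}=s_2s_1$ sends $e_1=-\vp_2$ to $\vp_2-\vp_1$).
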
 
\begin{proof}
One has simply to observe that
$\sigma_{ij}(e_i)=e_j$.  This is obvious when $i=j$ and also when $\sigma_{ij}$ is a rotation.
It remains to check it when $\sigma_{ij}=s_3$ and $i<j$ which occurs only when $i=1$ and $j=2$
but $s_3(\vp_1)=-\vp_2$.
\end{proof}

We have
$$\mint(s_1,\lambda)=\frac{\Zeta(<\lambda,\check{\alpha_1}>)}
{\Zeta(1+<\lambda,\check{\alpha_1}>)}\com{,}\mint(s_2,\lambda)=\frac{\Zeta(<\lambda,\check{\alpha_2}>)}
{\Zeta(1+<\lambda,\check{\alpha_2}>)}
$$
$$\mint(r_1,\lambda)=
\frac{\Zeta(<\lambda,\check{\alpha_2}>)}
{\Zeta(1+<\lambda,\check{\alpha_2}>)}
\frac{\Zeta(<\lambda,\check\rho>)}
{\Zeta(1+<\lambda,\check\rho>)}\!\!\!\!
\com{,}\!\!\!\mint(r_2,\lambda)=\frac{\Zeta(<\lambda,\check{\alpha_1}>)}
{\Zeta(1+<\lambda,\check{\alpha_1}>)}
\frac{\Zeta(<\lambda,\check\rho>)}
{\Zeta(1+<\lambda,\check\rho>)}
$$
and since $s_3=s_1s_2s_1$ 
$$\mint(s_3,\lambda)=\frac{\Zeta(<\lambda,\check{\alpha_1}>)}
{\Zeta(1+<\lambda,\check{\alpha_1}>)}
\frac{\Zeta(<\lambda,\check{\alpha_2}>)}
{\Zeta(1+<\lambda,\check{\alpha_2}>)}
\frac{\Zeta(<\lambda,\check\rho>)}
{\Zeta(1+<\lambda,\check\rho>)}
\ptf$$
The singularities of functions $\mint(s,\lambda)$  in the domain 
$$\mathcal R_s=\{\lambda\,\,|\,\,\Re(<\lambda,\check\alpha_i>)\ge0\com{for}\alpha_i>0\com{and}s(\alpha_i)<0\}$$
 are  along the affine subspaces 
$$\Lambda_i=\{\lambda\,|<\lambda,\check\alpha_i>=1\}\ptf
$$ In fact,   for $\lambda\in\mathcal R_s$ factors in the numerators are holomorphic except for poles
whenever  $<\lambda,\check\alpha_i>=1$ or $0$ for some $i$, while
 denominators are holomorphic and non zero whenever $<\lambda,\check\alpha_i>\ne0$;
moreover $\mint(s,\lambda)$ is holomorphic when $(<\lambda,\check\alpha_i>)=0$
for some $i$ since singularities of numerator and denominator cancel.

The subspaces $\Lambda_i$ are  the sets of $\lambda_i=\delta_i+ze_i$ with $z\in\CM$.
The residue of  $\mint(\sigma_{ij},\lambda)$
at $\lambda=\lambda_i=\delta_i+ze_i$
can be written
$$
\frac{1}{\Zeta(2)}n_{ij}(z)
=\frac{1}{\Zeta(2)}
n(\sigma_{ij},\lambda_i)$$ 
The  matrix $\mathcal N(z)$ with entries  $n_{ij}(z)=n(\sigma_{ij},\lambda_i)$ is given by:
$$
\mathcal N(z)=\begin{pmatrix}
1&
n(s_3,\lambda_1)
&
n(r_2,\lambda_1)
\cr
n(s_3,\lambda_2)
&
1&
n(r_1,\lambda_2)
\cr
n(r_1,\lambda_3)
&
n(r_2,\lambda_3)
&n(s_3,\lambda_3)
\cr\end{pmatrix}
$$
Now, taking into account that 
\renewcommand{\arraystretch}{1.9} 
$$\Zeta(1+<\lambda_1,\check{\alpha_2}>)=\Zeta(<\lambda_1,\check\rho>)
\com{and}\Zeta(1+<\lambda_2,\check{\alpha_1}>)=\Zeta(<\lambda_2,\check\rho>)
$$ we see that  
$$
n(s_3,\lambda_1)=\frac{\Zeta(<\lambda_1,\check{\alpha_2}>)}{\Zeta(1+<\lambda_1,\check{\alpha_2}>)}
\frac{\Zeta(<\lambda_1,\check\rho>)}{\Zeta(1+<\lambda_1,\check\rho>)}
=\frac{\Zeta(<\lambda_1,\check{\alpha_2}>)}{\Zeta(1+<\lambda_1,\check\rho>)}
$$
 and a similar cancellation occurs for $n(s_3,\lambda_2)$. This shows that 
 $$\mathcal N(z)=
\begin{pmatrix}
1&
\frac{\Zeta(<\lambda_1,\check{\alpha_2}>)}{\Zeta(1+<\lambda_1,\check\rho>)}
&
\frac{\Zeta(<\lambda_1,\check\rho>)}{\Zeta(1+<\lambda_1,\check\rho>)}
\cr
\frac{\Zeta(<\lambda_2,\check{\alpha_1}>)}{\Zeta(1+<\lambda_2,\check\rho>)}
&1&
\frac{\Zeta(<\lambda_2,\check\rho>)}{\Zeta(1+<\lambda_2,\check\rho>)}
\cr

\frac{\Zeta(<\lambda_3,\check{\alpha_1}>)}{\Zeta(1+<\lambda_3,\check{\alpha_1}>)}
&
\frac{\Zeta(<\lambda_3,\check{\alpha_2}>)}{\Zeta(1+<\lambda_3,\check{\alpha_2}>)}
&

\frac{\Zeta(<\lambda_3,\check{\alpha_1}>)}{\Zeta(1+<\lambda_3,\check{\alpha_1}>)}
\frac{\Zeta(<\lambda_3,\check{\alpha_2}>)}{\Zeta(1+<\lambda_3,\check{\alpha_2}>)}
\cr
\end{pmatrix}$$
Since $<\delta_1,\check\alpha_2>=<\delta_2,\check\alpha_1>=-\frac{1}{2}$
and  $<\delta_i,\check\rho>=\frac{1}{2}$
we get 
$$\mathcal N(z)=\begin{pmatrix}
1&
\frac{\Zeta(-z-\frac{1}{2})}{\Zeta(-z+\frac{3}{2})}
&
\frac{\Zeta(-z+\frac{1}{2}))}{\Zeta(-z+\frac{3}{2}))}
\cr
\frac{\Zeta(z-\frac{1}{2})}{\Zeta(z+\frac{3}{2})}
&
1&
\frac{\Zeta(z+\frac{1}{2}))}{\Zeta(z+\frac{3}{2}))}
\cr
\frac{\Zeta(z+\frac{1}{2}))}{\Zeta(z+\frac{3}{2}))}
&
\frac{\Zeta(-z+\frac{1}{2}))}{\Zeta(-z+\frac{3}{2}))}
&
\frac{\Zeta(z+\frac{1}{2}))}{\Zeta(z+\frac{3}{2}))}
\frac{\Zeta(-z+\frac{1}{2}))}{\Zeta(-z+\frac{3}{2}))}
\cr
\end{pmatrix}
$$

\begin{lemma} \label{nz}
The matrix $\mathcal N(z)$ is of rank one {and} $n_{ij}( z)=n_{ji}(-z)$. \end{lemma}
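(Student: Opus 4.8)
The plan is to prove both assertions by a direct computation that first rewrites every entry of $\mathcal N(z)$ as a ratio of a small fixed set of $\Zeta$-values, using only the functional equation $\Zeta(\sigma)=\Zeta(1-\sigma)$. The point is to clear the two off-diagonal entries that carry the ``wrong'' arguments $\Zeta(-z-\frac12)$ and $\Zeta(z-\frac12)$: one has $\Zeta(-z-\frac12)=\Zeta(z+\frac32)$ and $\Zeta(z-\frac12)=\Zeta(-z+\frac32)$, and likewise $\Zeta(-z+\frac12)=\Zeta(z+\frac12)$. Setting $p=\Zeta(z+\frac12)$, $q=\Zeta(z+\frac32)$ and $\bar q=\Zeta(-z+\frac32)$, I would record the matrix in the normalised form
$$\mathcal N(z)=\begin{pmatrix} 1 & q/\bar q & p/\bar q\cr \bar q/q & 1 & p/q\cr p/q & p/\bar q & p^2/(q\bar q)\cr\end{pmatrix}\ptf$$

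For the rank-one assertion I would then simply exhibit the factorisation $\mathcal N(z)=u(z)\,v(z)^{t}$ as the outer product of the two column vectors $u(z)$ and $v(z)$ with entries $(1,\bar q/q,p/q)$ and $(1,q/\bar q,p/\bar q)$ respectively; multiplying out reproduces the nine entries above, and since $\mathcal N(z)$ is visibly nonzero its rank is exactly one. Equivalently, one checks that each $2\times 2$ minor vanishes, every such vanishing being one instance of $\Zeta(\sigma)=\Zeta(1-\sigma)$; this is the only place where the functional equation is genuinely needed.

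For the symmetry $n_{ij}(z)=n_{ji}(-z)$ I would observe that the substitution $z\mapsto -z$ fixes $p$ (again by the functional equation) and interchanges $q$ and $\bar q$, hence interchanges the two factors, i.e. $u(-z)=v(z)$ and $v(-z)=u(z)$. Therefore $\mathcal N(-z)=u(-z)\,v(-z)^{t}=v(z)\,u(z)^{t}=\mathcal N(z)^{t}$, which is precisely the claimed relation. This half in fact needs no functional equation at all: reading off the original unsimplified entries, replacing $z$ by $-z$ in $n_{ij}(z)$ exchanges every ``$+z$'' argument with the corresponding ``$-z$'' one and reproduces $n_{ji}(z)$ term by term.

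I do not expect any genuine obstacle beyond bookkeeping: the whole content lies in applying the three functional-equation identities correctly and tracking which $\Zeta$-values coincide. The one step demanding care is the reduction of the $(1,2)$ and $(2,1)$ entries, and checking that the cancellations already used to pass from $n(s_3,\lambda_i)$ to the displayed form of $\mathcal N(z)$ are consistent with the rank-one factorisation above.
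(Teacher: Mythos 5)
Your proof is correct and follows essentially the same route as the paper, whose own proof merely asserts that the rank-one property ``can be checked using its explicit expression and the functional equation for $\Zeta(\sigma)$''. You have simply carried out that check in full, and the explicit outer-product factorisation $\mathcal N(z)=u(z)\,v(z)^{t}$ together with the observation that $z\mapsto -z$ swaps $u$ and $v$ is a clean way to organise the computation that the paper leaves to the reader.
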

\begin{proof}
That $\mathcal N(z)$ is of rank one 
can be checked using its explicit expression and the functional equation for $\Zeta(\sigma)$.
\end{proof}
\begin{lemma} \label{run}
One has $$n_{ij}(z)=n_{ik}(z)\overline{n_{jk}(z)}\com{if} \overline z=-z
\com{for $k=1$ or $2$}\ptf$$
\end{lemma}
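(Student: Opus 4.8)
The plan is to reduce the identity to the two facts furnished by Lemma \ref{nz}: that $\mathcal N(z)$ has rank one and that $n_{ij}(z)=n_{ji}(-z)$, combined with the reality of the complete zeta function $\Zeta$ on the real axis.

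First I would dispose of the complex conjugate. The entries $n_{jk}(z)$ are ratios of products of values $\Zeta(w)$ in which $w$ is an affine function of $z$ with \emph{real} coefficients, and $\Zeta$ satisfies $\overline{\Zeta(w)}=\Zeta(\overline w)$ (being real on the real line and holomorphic, by Schwarz reflection). Conjugation therefore replaces $z$ by $\overline z$, so under the hypothesis $\overline z=-z$ one gets $\overline{n_{jk}(z)}=n_{jk}(-z)$. Applying the symmetry $n_{jk}(-z)=n_{kj}(z)$ of Lemma \ref{nz}, this becomes $\overline{n_{jk}(z)}=n_{kj}(z)$, and the assertion to be proved takes the purely multiplicative form $n_{ij}(z)=n_{ik}(z)\,n_{kj}(z)$.

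Next I would exploit the rank-one property. Writing $n_{ij}(z)=c_i(z)\,d_j(z)$, one has $n_{ik}(z)\,n_{kj}(z)=c_i d_k\,c_k d_j=(c_i d_j)(c_k d_k)=n_{ij}(z)\,n_{kk}(z)$, so the multiplicative relation holds exactly when the diagonal entry $n_{kk}(z)$ equals $1$. The explicit form of $\mathcal N(z)$ shows $n_{11}(z)=n_{22}(z)=1$, whereas $n_{33}(z)\neq 1$ in general; this is precisely why the statement is restricted to $k=1$ or $k=2$.

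The only delicate point is the first step, namely checking that conjugating each $\Zeta$-factor amounts to sending $z$ to $-z$; this rests only on the reality of $\Zeta$ and on $\overline z=-z$, and involves no analytic continuation or contour shift. Once it is in place, the result follows immediately from the rank-one factorization and the computation $n_{kk}=1$ for $k\le 2$, so I expect no serious obstacle.
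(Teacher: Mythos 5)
Your proof is correct and follows essentially the same route as the paper's: both arguments combine the rank-one factorization of $\mathcal N(z)$ with the fact that $n_{kk}(z)=1$ for $k=1,2$ to obtain the multiplicative identity $n_{ij}(z)=n_{ik}(z)n_{kj}(z)$, and then invoke $\overline{n_{jk}(z)}=n_{kj}(z)$ for purely imaginary $z$. Your treatment is if anything slightly more careful, since you justify the conjugation step via Schwarz reflection and the symmetry $n_{ij}(z)=n_{ji}(-z)$ of Lemma \ref{nz}, which the paper merely asserts.
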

\begin{proof}Since $\mathcal N(z)$ is of rank one
there exists functions $c_{ij}(z)$ 
such that for any $i$, $j$ or $k$
$$c_{ik}(z)n_{kj}(z)=n_{ij}(z)\com{and}c_{ik}(z)c_{kj}(z)=c_{ij}(z)\ptf$$
In particular
$$c_{ik}(z)c_{ki}(z)=1\ptf$$
But since $n_{kk}=1$ for $k=1$ or $2$ we have
$c_{ki}(z)n_{ik}(z)=1$
and  hence
$$c_{ik}(z)=n_{ik}(z)\com{for $k=1$ or $2$}\ptf$$
This shows that
$$n_{ij}(z)=n_{ik}(z)n_{kj}(z)$$
for $k=1$ or $2$ and the lemma follows since
$$n_{jk}( z)=\overline{n_{kj}(z)}\com{if} \overline z=-z\ptf$$
\end{proof}
\subsection{Contour deformation for $\CMC_{P_0}^K$}\label{dr}

We start from 
$$<\Ei_\phi,\Ei_\phi>_\LMC=\sum_{s  \in \weyl(\apo,\apo)}\int_{\lambda\in\lambda_0+\Lambda_{P_0}}
\mint(s,\lambda)\Phi(e_G,\lambda)
\overline{{\Phi(e_G,-s\overline\lambda)}}
\,d\lambda\ptf$$
When we move $\lambda_0$ to $0$,
 ignoring for a while the singularities on affine lines $<\lambda,\check\alpha_i>=1$
 of the intertwining operators, we get an integral over a space of dimension 2:
$$A=\sum_{s  \in \weyl(\apo,\apo)}\int_{\lambda\in\Lambda_{P_0}}
\mint(s,\lambda)\Phi_0(\lambda)
\overline{{\Phi_0(s\lambda)}}
\,d\lambda\ptf$$
with $\Phi_0(\lambda):=\Phi(e_G,\lambda)$.
Here $\Lambda_{P_0}$ is of dimension 2.

We get, as a particular case of  \ref{van},  the expected contribution to
$<\Ei_\phi,\Ei_\phi>_\LMC$
coming from $P_0$ to the spectral decomposition, 
when the contribution of residues is omitted. It will be denoted by $A$

\begin{proposition}\label{vanA}
$$A=\frac{1}{ 6}\int_{\lambda\in \Lambda_{P_0}}
<\Ei_\phi,E(\bullet;\Psi_0,\lambda)>_\LMC
\overline{<\Ei_\phi,E(\bullet;\Psi_0,\lambda)>_\LMC}\,d\lambda$$
where $\Psi_0$ is a constant function on $G$ normalized so that $<\Psi_0,\Psi_0>_{\LMC_{P_0}}=1$
\end{proposition}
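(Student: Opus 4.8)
The plan is to read Proposition~\ref{vanA} as the specialization of Proposition~\ref{van} to $\GG=GL(3)$ and $P=P_0$, and then to make the abstract coefficient $F_{P_0}(\lambda)$ completely explicit. First I would record the combinatorial input: since $P_0$ is minimal, $\weyl^{P_0}$ is trivial, so every element of the full Weyl group $\weyl^G\simeq S_3$ is of minimal length in its coset and $\weyl(\apo,\apo)=\weyl^G$. Hence $w(P_0)=6$, which accounts for the constant $\tfrac16$ in the statement.

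Next I would run the purely formal manipulation of \S\ref{forg} in this case. Starting from
$$A=\sum_{s\in\weyl(\apo,\apo)}\int_{\lambda\in\Lambda_{P_0}}\mint(s,\lambda)\Phi_0(\lambda)\overline{\Phi_0(s\lambda)}\,d\lambda,$$
I would symmetrize by the substitution $\lambda\mapsto t\lambda$ together with summation over $t\in\weyl(\apo,\apo)$ (dividing by $w(P_0)$), then apply the cocycle relation $\Mint(st,\lambda)=\Mint(s,t\lambda)\Mint(t,\lambda)$ of Theorem~\ref{eiscont}(iii) and the unitarity $\Mint(u,\lambda)^*=\Mint(u,\lambda)\mun$ valid on $\Lambda_{P_0}=\iapgo$ by Theorem~\ref{eiscont}(iv). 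This is exactly the computation yielding Proposition~\ref{van}, so it produces
$$A=\frac{1}{w(P_0)}\int_{\lambda\in\Lambda_{P_0}}\langle F_{P_0}(\lambda),F_{P_0}(\lambda)\rangle\,d\lambda,\qquad F_{P_0}(\lambda)=\sum_{s\in\weyl(\apo,\apo)}\Mint(s,\lambda)\mun\Phi(\bullet,s\lambda).$$
Because we work with $K$-invariant data, the cuspidal space $\LMCPO^K$ is one dimensional, spanned by the unit vector $\Psi_0$; thus $\Phi(\bullet,s\lambda)=\Phi_0(s\lambda)\Psi_0$, the operator $\Mint(s,\lambda)$ acts by the scalar $\mint(s,\lambda)$, and $F_{P_0}(\lambda)=c(\lambda)\Psi_0$ with $c(\lambda)=\sum_s\mint(s,\lambda)\mun\Phi_0(s\lambda)$, so that $\langle F_{P_0}(\lambda),F_{P_0}(\lambda)\rangle=|c(\lambda)|^2$.

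The main step is then to identify $c(\lambda)$ with the Eisenstein coefficient $\langle\Ei_\phi,E(\bullet;\Psi_0,\lambda)\rangle_\LMC$. I would compute this inner product by unfolding: since $\Ei_\phi$ is the pseudo-Eisenstein series $\sum_{\gamma\in\Gamma_{P_0}\bs\Gamma}\phi(\gamma x)$, Lemma~\ref{fc} gives
$$\langle\Ei_\phi,E(\bullet;\Psi_0,\lambda)\rangle_\LMC=\int_{X_{P_0,G}}\phi(x)\,\overline{E_{P_0}(x;\Psi_0,\lambda)}\,dx,$$
and Lemma~\ref{cstt} expresses the constant term along $P_0$ as $E_{P_0}(x;\Psi_0,\lambda)=\sum_s\mint(s,\lambda)e^{\langle s\lambda+\rho,H_{P_0}(x)\rangle}\Psi_0$. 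Substituting the Fourier expansion $\phi(x)=\int_{\lambda_0+\iapgo}\Phi(x,\mu)e^{\langle\mu+\rho,H_{P_0}(x)\rangle}\,d\mu$ and integrating first over $\APO/\AG$, the characters $e^{\langle\mu-s\lambda,H_{P_0}\rangle}$ collapse by Fourier inversion on the dual pair $(\APO/\AG,\iapgo)$ to force $\mu=s\lambda$, the factor $e^{\langle2\rho,\cdot\rangle}$ being exactly the modular term built into the measure on $X_{P_0,G}$. This leaves $\sum_s\overline{\mint(s,\lambda)}\,\Phi_0(s\lambda)\,\langle\Psi_0,\Psi_0\rangle$; on $\iapgo$ one has $\overline{\mint(s,\lambda)}=\mint(s,\lambda)\mun$ and $\langle\Psi_0,\Psi_0\rangle=1$, so this equals $c(\lambda)$. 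Consequently $\langle F_{P_0}(\lambda),F_{P_0}(\lambda)\rangle=|\langle\Ei_\phi,E(\bullet;\Psi_0,\lambda)\rangle_\LMC|^2$, and inserting this into the displayed formula for $A$ with $w(P_0)=6$ yields precisely the assertion.

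I expect the only delicate point to be the unfolding computation of the last paragraph: one must keep careful track of the $\rho$-shifts and of the measure normalization on $X_{P_0,G}$ so that the integration over $\APO/\AG$ really reduces to Fourier inversion and extracts the single mode $\mu=s\lambda$. The combinatorial and algebraic parts --- the value $w(P_0)=6$ and the symmetrization via the cocycle and unitarity relations --- are then formal once Theorem~\ref{eiscont} is available, and the passage from $\langle F_{P_0},F_{P_0}\rangle$ to $|\langle\Ei_\phi,E(\bullet;\Psi_0,\lambda)\rangle|^2$ is immediate from the one-dimensionality of $\LMCPO^K$.
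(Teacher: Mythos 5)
Your proposal is correct and follows the paper's route exactly: the paper presents Proposition~\ref{vanA} as the specialization of Proposition~\ref{van} to $P=P_0$ with $w(P_0)=6$ and the one-dimensional $K$-invariant cuspidal space spanned by $\Psi_0$, which is precisely your argument. The only thing you add is the explicit unfolding verification that $\langle F_{P_0}(\lambda),\Psi_0\rangle_{\LMC_{P_0}}=\langle\Ei_\phi,E(\bullet;\Psi_0,\lambda)\rangle_\LMC$, which the paper leaves implicit as part of the general framework preceding Theorem~\ref{speca}; your computation of it (via Lemmas~\ref{fc} and~\ref{cstt}, Fourier inversion on $\APO/\AG$, and unitarity on $\iapgo$) is sound.
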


Now, consider the residue on affine lines $<\lambda,\check\alpha_i>=1$ and move the contour to 
 $\Lambda_i$ where
$$\Lambda_i=\{\lambda_i  \, | \,<\lambda_i,\check\alpha_i>=1, \Re(\lambda_i)=\delta_i\}
$$
are  affine subspaces of real dimension 1: $ \lambda_i=\delta_i+ze_i$ with $\overline z=-z\in\CM$.
 Ignoring the possible {\it ``double residues''} that will be treated later on 
  (see \ref{vanC}) and thanks to \ref{eij} we see that 
  $$\overline{-\sigma_{ij}(\lambda_i)}=\lambda_j$$
and we get a contribution of the form:
 $$B=\frac{1}{\Zeta(2)}\sum_{i=1}^{3}\sum_{j=1}^{3} \int_{z\in i\RM}
n_{ij}(z)\Phi_i(z)
\overline{{\Phi_j(z)}}\,dz$$ 
if  $$\Phi_i(z)=\Phi(e_G,\lambda_i )=\Phi(e_G,\delta_i+ze_i )\ptf$$
Now, let us denote by $\Psi_0$ the function identically equal to 1. It yields 
an element of norm 1 in $\LMCPO$ since, by assumption, $\vol(K)=1$. Let
$E_i(x;\Psi_0,ze_i)$ be the residue of $E(x;\Psi_0,\lambda)$ on the affine complex line
$\lambda=\delta_i+ze_i$; its constant term along $P_0$ is given by
$$\int_{\Gamma\cap N_0\bs N_0}  E_i(nx;\Psi_0,ze_i)\,dn=\frac{1}{\Zeta(2)}
\sum_{j=1}^{3}e^{<\sigma_{ij}(\delta_i+ze_i)+\rho,H_0(x)>}
n_{ij}(z)\Psi_0(x)$$
which, thanks to \ref{eij}, is  equal to
$$\frac{1}{\Zeta(2)}
\sum_{j=1}^{3}e^{<-\delta_j+ze_j+\rho,H_0(x)>}
n_{ij}(z)\Psi_0(x)\ptf$$
Now
$$<\Ei_\phi,E_i(\bullet;\Psi_0,ze_i)>=\frac{1}{\Zeta(2)}\sum_{j=1}^{3}
\int_{X_{P_0}}\phi(x) e^{<-\delta_i-ze_j+\rho,H_0(x)>}
\overline{ n_{ij}(z)\Psi_0(x)}\,dx
$$ Since $\Psi_0(x)\equiv 1$ and $\overline{n_{ij}(z)}=n_{ji}(z)$ {if} $\overline z=-z$ we have
$$<\Ei_\phi,E_i(\bullet;\Psi_0,ze_i)>=\frac{1}{\Zeta(2)}\sum_{j=1}^{3}n_{ji}(z) \Phi_j(z)
\ptf$$
Recall that
 $$B=\frac{1}{\Zeta(2)}\sum_{i=1}^{3}\sum_{j=1}^{3} \int_{z\in i\RM}
n_{ij}(z)\Phi_i(z)
\overline{{\Phi_j(z)}}\,dz
\ptf$$
Thanks to \ref{run} this can be rewritten
$$ B=\frac{1}{\Zeta(2)}\sum_{i=1}^{3}\sum_{j=1}^{3} \int_{z\in i\RM}
n_{i1}(z)\overline{n_{j1}(z)}\Phi_i(z)
\overline{{\Phi_j(z)}}\,dz$$
and altogether we get
$$B=\Zeta(2)\int_{i\RM}
<\Ei_\phi,E_1(\bullet;\Psi_0,ze_i)>
\overline{<\Ei_\phi,E_1(\bullet;\Psi_0,ze_i)>}\,dz\ptf$$
Observe that we could have chosen $E_2$ instead of $E_1$.
Let $\Psi_1$ to be a constant function of norm 1 in $\LMC_{P_1}$.
We may take
$$\Psi_1(x)=\frac{\Psi_0(x)}{\sqrt{\Zeta(2)}}\equiv\frac{1}{\sqrt{\Zeta(2)}}$$
and hence the contribution $B$ of simple residues 
to the spectral decomposition for the space of $K$-invariant vectors is given by
\begin{proposition}\label{vanB}
$$B=\int_{\lambda_1\in\Lambda_{P_1}}
<\Ei_\phi,E_1(\bullet;\Psi_1,\lambda_1)>_\LMC
\overline{<\Ei_\phi,E_1(\bullet;\Psi_1,\lambda_1)>_\LMC}\,\,d\lambda_1$$
 where $\Lambda_{P_1}$ is the line $ze_1$ with $z$ imaginary.
\end{proposition}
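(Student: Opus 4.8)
The plan is to repackage the identity for $B$ obtained just above the statement,
$$B=\Zeta(2)\int_{z\in i\RM}
<\Ei_\phi,E_1(\bullet;\Psi_0,ze_1)>_\LMC\,
\overline{<\Ei_\phi,E_1(\bullet;\Psi_0,ze_1)>_\LMC}\;dz\ptf$$
into the normalized spectral shape of \ref{speca}, namely an integral over $\Lambda_{P_1}$ of the square modulus of a coefficient attached to a datum of norm one in $\LMC_{P_1}$, with coefficient $1/w(P_1)$. Thus the only remaining work is to absorb the scalar $\Zeta(2)$ standing in front, to pass from the datum $\Psi_0$ (normalized in $\LMC_{P_0}$) to the datum $\Psi_1$ normalized in $\LMC_{P_1}$, and to identify the measure $dz$ on $i\RM$ with the canonical measure $d\lambda_1$ on $\Lambda_{P_1}$ under $\lambda_1=\delta_1+ze_1$.

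First I would fix the inducing datum. The constant $\Psi_0$ has norm one in $\LMC_{P_0}$, but its norm in $\LMC_{P_1}$ equals $\Zeta(2)$, which is the volume of the $GL(2)$-block quotient attached to the Levi $M_{P_1}\simeq GL(2)\times GL(1)$ --- the very same $\Zeta(2)$ that realized the trivial representation of $GL(2)$ as a residue in the $GL(2)$ case treated above; hence $\Psi_1=\Psi_0/\sqrt{\Zeta(2)}$ is the constant of norm one in $\LMC_{P_1}$. Next I would compare the residue $E_1(\bullet;\Psi_0,ze_1)$ of the $P_0$-series along $\Lambda_1$ with the genuine $P_1$-Eisenstein series induced from the same datum: the two differ by the residue constant $1/\Zeta(2)$ visible in the constant term of $E_1$, so combining this with the datum rescaling $\Psi_0=\sqrt{\Zeta(2)}\,\Psi_1$ gives
$$E_1(\bullet;\Psi_0,ze_1)=\frac{1}{\sqrt{\Zeta(2)}}\,E_1(\bullet;\Psi_1,\lambda_1)\ptf$$
Squaring the corresponding coefficients contributes a factor $1/\Zeta(2)$, which cancels the $\Zeta(2)$ in front of the integral; identifying $dz$ with $d\lambda_1$ then yields exactly the asserted formula with coefficient one.

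The step requiring the most care --- and the main obstacle --- is this tracking of the transcendental factor $\Zeta(2)$, since it enters three times over: as the residue of the intertwining scalar $\mint(\bullet,\bullet)$ at the relevant pole, as the norm $<\Psi_0,\Psi_0>_{\LMC_{P_1}}$, and through the normalization implicit in identifying a residue with a genuine Eisenstein series; these must conspire to leave the clean coefficient $1/w(P_1)=1$. As a sanity check I would verify the combinatorial constants against \ref{specb}: for the associate class $\{P_1,P_2\}$ one has $w(P_1)=1$ (the only Weyl element fixing $\ga_{P_1}$ is the identity, the non-trivial one carrying $\ga_{P_1}$ to $\ga_{P_2}$) and the integral runs over the full line $\Lambda_{P_1}$, which is also why $E_2$ could replace $E_1$, reflecting the unitarity on $i\RM$ of the operator relating the two associate parabolics. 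Beyond this constant-tracking no further analytic input is needed: the contour shift was already granted in \ref{forg} and the rank-one and cancellation facts in Lemmas \ref{nz} and \ref{run}.
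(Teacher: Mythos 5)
Your proposal is correct and follows essentially the same route as the paper, which likewise passes from the identity $B=\Zeta(2)\int_{i\RM}|<\Ei_\phi,E_1(\bullet;\Psi_0,ze_1)>|^2\,dz$ to the stated formula by replacing $\Psi_0$ with the renormalized datum $\Psi_1=\Psi_0/\sqrt{\Zeta(2)}$ of norm one in $\LMC_{P_1}$. In fact you make explicit the cancellation of the three occurrences of $\Zeta(2)$ (residue constant, squared norm of $\Psi_0$ in $\LMC_{P_1}$, and the prefactor) that the paper compresses into a single ``and hence''.
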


Using the associate
parabolic subgroup $P_2$ instead of $P_1$ 
one would get the same expression for $B$ but with $E_2$ in place of $E_1$.

We have now to list elements $s$ in the Weyl group and values of $$\lambda=a\vp_1+b\vp_2$$  such that
$\mint(s,\lambda)$ has a pole in each variable $a$ and $b$ and compute the double-residues.  
There are 5 possibilities.
The function $\mint(r_1,\lambda)$ (resp. $\mint(r_2,\lambda)$) has a pole in each variable
 when $\lambda=\vp_2$ (resp. $\lambda=\vp_1$)
and $\mint(s_3,\lambda)$ has also a pole in each variable when $\lambda=\vp_2$ or $\lambda=\vp_1$.
The double-residues are $$\frac{1}{\Zeta(2)\Zeta(2)}\com{for} \mint(r_1,\lambda)\com{or}\mint(r_2,\lambda)\ptf$$
Since $r_1(\vp_2)=-\vp_1$ and
$r_2(\vp_1)=-\vp_2$  
 we get as contributions from $r_1$ and $r_2$ 
$$\frac{1}{\Zeta(2)\Zeta(2)}\Phi(e_G,\vp_2)\overline{\Phi(e_G,\vp_1)}\com{and}
 \frac{1}{\Zeta(2)\Zeta(2)} \Phi(e_G,\vp_1)\overline{ \Phi(e_G,\vp_2)} $$
Now observe that $$\lim_{\lambda\to\vp_i}
\frac{\Zeta(<\lambda,\check{\alpha_i}>)}
{\Zeta(1+<\lambda,\check{\alpha_i}>)}=-1$$  for $i=1$ or 2.
This shows that the double residues for $\mint(s_3,\lambda)$ at $\lambda=\vp_i$
are equal to
$$\frac{-1}{\Zeta(2)\Zeta(2)}\ptf$$ Since 
$s_3(\vp_2)=-\vp_1$ and $s_3(\vp_1)=-\vp_2$
the double residues of
$$\mint(s_3,\lambda)\Phi(e_G,\lambda)
\overline{{\Phi(e_G,-s\overline\lambda)}}$$  are
$$\frac{-1}{\Zeta(2)\Zeta(2)}\Phi(e_G,\vp_2)\overline{\Phi(e_G,\vp_1)}\com{and}
 \frac{-1}{\Zeta(2)\Zeta(2)}\Phi(e_G,\vp_1)\overline{ \Phi(e_G,\vp_2)} \ptf$$
These two contributions of $s_3$ cancel those coming from $r_1$ and $r_2$ and hence
the double residues at $\lambda=\vp_i$
do not contribute to the spectral decomposition. The fifth double residue occurs for
  $\mint(s_3,\lambda)$ at  $\lambda=\rho$ and 
 is given by $$\frac{1}{\Zeta(2)\Zeta(3)}$$
so that, altogether the contribution of the double residues is
$$C=\frac{1}{\Zeta(2)\Zeta(3)}\Phi(e_G,\rho)\overline{\Phi(e_G,\rho)}\ptf$$
Using that
$$<1,1>_\LMC=\vol(X_G)=\Zeta(2)\Zeta(3)$$
when the volume is computed using the Haar measure described above,
we get the
\begin{proposition}\label{vanC}
$$C=<\Ei_\phi,\Psi_G>_\LMC<\Psi_G,\Ei_\phi>_\LMC$$
if $\Psi_G$ is  a constant function  of norm 1 in $\LMC$.
\end{proposition}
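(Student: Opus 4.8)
The plan is to recognize the double residue $C$ as a pairing against the constant function and then fix the normalizing constant. First I would recall that the contour deformation has already produced
$$C=\frac{1}{\Zeta(2)\Zeta(3)}\Phi(e_G,\rho)\overline{\Phi(e_G,\rho)}\ptf$$
so the entire task reduces to expressing $\Phi(e_G,\rho)$ as a scalar product in $\LMC$ and supplying the right constant.

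The key step is to identify $\Phi(e_G,\rho)$ with $<\Ei_\phi,1>_\LMC$, exactly as in \ref{triv}. I would apply Lemma \ref{fc} with $\vf=1$, whose constant term along $P_0$ is again $1$, so that $<\Ei_\phi,1>_\LMC=\int_{X_{P_0,G}}\phi(x)\,dx$; inserting the Fourier inversion formula for $\phi$ and using the Haar measure normalized as in \ref{triv} pins this integral to the value at $\lambda=\rho$ of the Mellin transform, namely $\Phi(e_G,\rho)$. This is precisely the computation carried out there for general split groups, which applies verbatim to $\GG=GL(3)$ with $P=P_0$. Consequently
$$C=\frac{1}{\Zeta(2)\Zeta(3)}<\Ei_\phi,1>_\LMC<1,\Ei_\phi>_\LMC\ptf$$

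It then remains to absorb the factor $1/(\Zeta(2)\Zeta(3))$ into the normalization of the constant function. Here I would invoke the volume computation $<1,1>_\LMC=\vol(X_G)=\Zeta(2)\Zeta(3)$, so that the constant function of norm $1$ is $\Psi_G=1/\sqrt{\Zeta(2)\Zeta(3)}$ and hence $<\Ei_\phi,\Psi_G>_\LMC=<\Ei_\phi,1>_\LMC/\sqrt{\Zeta(2)\Zeta(3)}$. Substituting this into the displayed expression for $C$ gives
$$<\Ei_\phi,\Psi_G>_\LMC<\Psi_G,\Ei_\phi>_\LMC=\frac{1}{\Zeta(2)\Zeta(3)}<\Ei_\phi,1>_\LMC<1,\Ei_\phi>_\LMC=C\ptf$$

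The only genuinely substantive inputs are the identity $<\Ei_\phi,1>_\LMC=\Phi(e_G,\rho)$ and the volume formula $\vol(X_G)=\Zeta(2)\Zeta(3)$; both parallel \ref{triv} and the resolvent argument of \cite{LB1}, and I would take them as given. The hard part, strictly speaking, is the measure bookkeeping ensuring that $<\Ei_\phi,1>_\LMC$ equals \emph{exactly} $\Phi(e_G,\rho)$ with no stray constant; once $dn\,dp\,dk$ is normalized as in \ref{triv} this is automatic, and the remaining manipulation is purely formal.
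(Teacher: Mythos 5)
Your proposal is correct and follows essentially the same route as the paper: starting from the double-residue expression $C=\frac{1}{\Zeta(2)\Zeta(3)}\Phi(e_G,\rho)\overline{\Phi(e_G,\rho)}$, identifying $\Phi(e_G,\rho)=<\Ei_\phi,1>_\LMC$ exactly as in the split-group computation of \ref{triv}, and absorbing the constant via $<1,1>_\LMC=\vol(X_G)=\Zeta(2)\Zeta(3)$. The paper's own argument is just these two steps stated more tersely, so there is nothing to add.
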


Summing up the contributions $A$, $B$ and $C$ given in \ref{vanA}, \ref{vanB} and \ref{vanC},
we have the
\begin{proposition}. For $\GG=GL(3)$ the spectral decomposition
of the space $\CMC_{P_0}^K$  is given by:
$$<\Ei_\phi,\Ei_\phi>_\LMC=\mskip550mu $$
$$\mskip-100mu\frac{1}{ 6}\int_{\lambda\in \Lambda_{P_0}}
<\Ei_\phi,E(\bullet;\Psi_0,\lambda)>_\LMC
\overline{<\Ei_\phi,E(\bullet;\Psi_0,\lambda)>_\LMC}\,\,d\lambda$$
$$\mskip110mu+\int_{\lambda_1\in \Lambda_{P_1}}
<\Ei_\phi,E_1(\bullet;\Psi_1,\lambda_1)>_\LMC
\overline{<\Ei_\phi,E_1(\bullet;\Psi_1,\lambda_1)>_\LMC}\,\,d\lambda_1$$
$$\mskip450mu+<\Ei_\phi,\Psi_G>_\LMC<\Psi_G,\Ei_\phi>_\LMC\ptf$$
\end{proposition}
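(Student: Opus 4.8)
The plan is to compute $<\Ei_\phi,\Ei_\phi>_\LMC$ directly from the integral representation of Proposition \ref{scp}, specialized to $P=Q=P_0$ and to $K$-invariant data, and then to deform the contour of integration from $\lambda_0\in C_{P_0}+\rho_{P_0}$ down to the imaginary subspace $\Lambda_{P_0}$, collecting the residues that arise at each crossing. The decisive simplification is that for $GL(3)$ every scalar $\mint(s,\lambda)$ is an explicit product of ratios $\Zeta(\sigma)/\Zeta(1+\sigma)$, so that, as recorded in the preparatory material, all its singularities in the region $\mathcal R_s$ lie on the affine lines $<\lambda,\check\alpha_i>=1$. This makes the residue bookkeeping completely explicit and finite, which is what permits a hands-on computation here in contrast to the general case.

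First I would discard all residues and push $\lambda_0$ all the way to $0$: this yields the two-dimensional integral $A$ of \ref{dr}, which by the specialization \ref{vanA} of Proposition \ref{van} is precisely the continuous contribution of $P_0$, with the combinatorial factor $1/6=1/w(P_0)$ arising from the averaging over $\weyl(\apo,\apo)$. Next I would account for the simple residues: crossing each line $<\lambda,\check\alpha_i>=1$ produces a one-dimensional residual integral along $\Lambda_i$ with kernel $n_{ij}(z)/\Zeta(2)$. Using Lemma \ref{eij} to pair $\overline{-\sigma_{ij}(\lambda_i)}=\lambda_j$, together with the rank-one factorization of the matrix $\mathcal N(z)$ supplied by Lemmas \ref{nz} and \ref{run}, these three lines assemble into the single square norm $B$ of Proposition \ref{vanB}, recognized as the continuous contribution along the maximal parabolic $P_1$ (equivalently $P_2$).

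It then remains to collect the residues that are double in both coordinates, and this is the step I expect to be the main obstacle, because several such residues are parasitic and must cancel. At $\lambda=\vp_1$ and $\lambda=\vp_2$ the contributions of the rotations $r_1,r_2$ are exactly annihilated by those of $s_3$: this hinges on the limit $\lim_{\lambda\to\vp_i}\Zeta(<\lambda,\check\alpha_i>)/\Zeta(1+<\lambda,\check\alpha_i>)=-1$, which flips the sign of the $s_3$ double residue relative to the $r_i$ one. The only surviving double residue is that of $\mint(s_3,\lambda)$ at $\lambda=\rho$, equal to $1/(\Zeta(2)\Zeta(3))$; combined with the volume identity $\vol(X_G)=\Zeta(2)\Zeta(3)$ it produces exactly the rank-one term $C$ of Proposition \ref{vanC}, that is, the projection onto the trivial representation.

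Summing the three stages gives $<\Ei_\phi,\Ei_\phi>_\LMC=A+B+C$, which is the asserted decomposition. Beyond the cancellation of parasitic double residues just described, the only genuinely analytic input I would need is the justification of each contour shift, namely that the integrand decays fast enough along vertical directions at infinity; this follows from the Paley--Wiener decay built into $\Phi(\bullet,\lambda)$ through the definition of $\DMC_{P_0}(R)$ together with elementary bounds on the zeta ratios, and is the sole place where estimates rather than formal manipulations enter.
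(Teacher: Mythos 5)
Your proposal is correct and takes essentially the same route as the paper: the specialization of Proposition \ref{scp} to $K$-invariant data on $P_0$, the contour shift giving the term $A$ of \ref{vanA}, the assembly of the simple residues into $B$ via Lemmas \ref{eij}, \ref{nz} and \ref{run}, the cancellation of the parasitic double residues at $\vp_1$ and $\vp_2$ between $r_1,r_2$ and $s_3$, and the identification of the surviving double residue at $\rho$ with the trivial representation through $\vol(X_G)=\Zeta(2)\Zeta(3)$. Your closing remark on justifying the contour shifts addresses an analytic point the paper deliberately leaves implicit, but otherwise the arguments coincide.
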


\subsection{Other contributions to $\LMC^K$}

Recall that 
$$\LMC^K=\CMC_{P_0}^K\oplus\CMC_{\PMC}^K\oplus\CMC_G^K$$
 where $\PMC$ is the association class of maximal  standard parabolic subgroups.
Since $$\CMC_G^K=\LMCC^K$$
 it remains to analyze the space 
$\CMC_{\PMC}^K$ generated
by  pseudo-Eisenstein series induced by $K$-invariant cusp form on the maximal parabolic subgroups $P_i\in\PMC$.
Given $\phi_i\in\DMC_{P_i}$, the scalar
product  is given by
$$<\Ei_{\phi_i},\Ei_{\phi_j}>_\LMC=\sum_{s  \in \weyl(\goth a_{P_i},\goth a_{P_j})}
\int_{\lambda_i\in\lambda_0+\Lambda_{P_i}}
<\Mint(s,\lambda_i)\Phi_1(\bullet,\lambda_i),{\Phi_j(\bullet,-s\overline\lambda_i)}>_{\LMC_{P_i}}\,d\lambda_i\ptf
$$ 
We observe that $\weyl(\goth a_{P_i},\goth a_{P_j})$ is reduced to a single element.
When $i=j=1$ the deformation of the contour meets no singular point and yields a contribution of the
form 
$$<\Ei_{\phi_1},\Ei_{\phi_1}>_\LMC=\sum_{\Psi\in\B_{\mathrm{cusp}}(P_1)}\int_{\lambda_1\in\Lambda_{P_1}}
<\Ei_{\phi_1},E(\bullet;\Psi,\lambda_1)>_\LMC
\overline{<\Ei_{\phi_1},E(\bullet;\Psi,\lambda_1)>_\LMC}\,\,d\lambda_1$$
where $\B_{\mathrm{cusp}}^K(P_1)$ is an orthonormal basis of the $K$-invariant cuspidal spectrum
for $P_1$ and $E(\bullet;\Psi,\lambda_1)$ is the Eisenstein series constructed from $\Psi$.
No new contribution comes from the other scalar products.
To show this one has to analyze the intertwining $\Mint(s,\lambda_1)$
operator attached to the unique element $s$ in $\weyl(\goth a_{P_1},\goth a_{P_2})$.
It would turn out that this operator is built out of $L$-functions for cuspidal representations
of $GL(2)$ that are holomorphic on the whole complex line and yield no further singularities
when moving the contour.

\bibliographystyle{amsalpha}

\begin{thebibliography}{99}

\bibitem{A1} 
  \textsc{Arthur, James} {\sl Eisenstein series and the trace formula}. Automorphic forms, 
  representations and L-functions (Proc. Sympos. Pure Math., Oregon State Univ., 
  Corvallis, Ore., 1977), Part 1, pp. 253-274, Proc. Sympos. Pure Math., XXXIII, 
  Amer. Math. Soc., Providence, R.I., 1979
  
\bibitem{A2} 
  \textsc{Arthur, James}
{\sl A trace formula for reductive groups. II. Applications of a truncation operator}.
Compositio Math. 40 (1980), no. 1, 87-121. 

\bibitem{B1} 
  \textsc{Borel, Armand}
{\sl Reduction Theory for Arithmetic Groups}. Algebraic Groups and Discontinuous Subgroups 
(Proc. Sympos. Pure Math., Boulder, Colo., 1965) pp. 20-25 Amer. Math. Soc., Providence, R.I. 1966 

\bibitem{B2} 
  \textsc{Borel, Armand}
{\sl Introduction to automorphic forms}.  Algebraic Groups and Discontinuous Subgroups 
(Proc. Sympos. Pure Math., Boulder, Colo., 1965) pp. 199-210 Amer. Math. Soc., Providence, R.I. 1966 

\bibitem{BJ} 
  \textsc{Borel, A.; Jacquet, H.}
{\sl Automorphic forms and automorphic representations.
With a supplement "On the notion of an automorphic representation'' by R. P. Langlands. }
Proc. Sympos. Pure Math., XXXIII, Automorphic forms, representations and L-functions 
(Proc. Sympos. Pure Math., Oregon State Univ., Corvallis, Ore., 1977), Part 1, pp. 189-207, 
Amer. Math. Soc., Providence, R.I., 1979. 

\bibitem{BL} 
  \textsc{Benstein, J.; Lapid, E.}
{\sl On the meromorphic continuation of Eisenstein series}
arXiv:1911.02342, 2019.

\bibitem{Fr} 
  \textsc{Franke, Jens}
{\sl Harmonic analysis in weighted L2-spaces. }
Ann. Sci. \'Ecole Norm. Sup. (4) 31 (1998), no. 2, 181-279. 

\bibitem{God}
 \textsc{Godement, Roger}
{\sl Analyse spectrale des fonctions modulaires.} [Spectral analysis of modular functions] 
SŽminaire Bourbaki, Vol. 9, Exp. No. 278, 15-40, Soc. Math. France, Paris, 1995. 

\bibitem{H}
 \textsc{Harish-Chandra}
{\sl Automorphic forms on Semisimple Lie Groups.} Lecture Notes in Mathematics, 
Vol. 62. Springer-Verlag, Berlin-New York, 1968


 \bibitem{K}  \textsc{Kottwitz, Robert E.}
{\sl Tamagawa numbers.}
Ann. of Math. (2) 127 (1988), no. 3, 629-646. 

 \bibitem{Ku}  \textsc{Kubota, Tomio}
{\sl Elementary theory of Eisenstein series.} Kodansha Ltd., Tokyo; Halsted Press [John Wiley \& Sons], 
New York-London-Sydney, 1973.  

 \bibitem{Lai}  \textsc{Lai, K. F.}
{\sl Tamagawa number of reductive algebraic groups.}
Compositio Math. 41 (1980), no. 2, 153-188

 \bibitem{LW}  \textsc{Labesse, Jean-Pierre; Waldspurger, Jean-Loup}
{\sl La formule des traces tordue d'aprs le Friday Morning Seminar. 
With a foreword by Robert Langlands. }
CRM Monograph Series, 31. American Mathematical Society, Providence, RI, 2013 


\bibitem{LES}
 \textsc{Langlands, Robert P.}
{\sl On the functional equations satisfied by Eisenstein series.} Lecture Notes in Mathematics, 
Vol. 544. Springer-Verlag, Berlin-New York, 1976


\bibitem{LB1} 
  \textsc{Langlands, Robert P.} {\sl The volume of the fundamental domain for some arithmetical 
  subgroups of Chevalley groups. } Algebraic Groups and Discontinuous Subgroups 
  (Proc. Sympos. Pure Math., Boulder, Colo., 1965) pp. 143-148 Amer. Math. Soc., Providence, R.I. 1966 

 \bibitem{LB2}
  \textsc{Langlands, Robert P.}{\sl  Eisenstein series. } Algebraic Groups and Discontinuous 
  Subgroups (Proc. Sympos. Pure Math., Boulder, Colo., 1965) pp. 235-252 
  Amer. Math. Soc., Providence, R.I. 1966


 \bibitem{LB3}
 \textsc{Langlands, Robert P.} {\sl Dimension of spaces of automorphic forms.} 
 Algebraic Groups and Discontinuous Subgroups (Proc. Sympos. Pure Math., 
 Boulder, Colo., 1965) pp. 253-257 Amer. Math. Soc., Providence, R.I. 1966 
 
 \bibitem{LEP}
 \textsc{Langlands, Robert P.} {\sl Euler products.} A James K. Whittemore 
 Lecture in Mathematics given at Yale University, 1967. 
 Yale Mathematical Monographs, 1. Yale University Press, New Haven, Conn.

\bibitem{LLMA}
 \textsc{Langlands, Robert P.} {\sl Problems in the theory of automorphic forms. }
 Lectures in modern analysis and applications, III, pp. 18-61. 
 Lecture Notes in Math., Vol. 170, Springer, Berlin, 1970. 
 
 \bibitem{MW1}
 \textsc{MÏglin, C.; Waldspurger, J.-L.}
{\sl  Le spectre rŽsiduel de GL(n). }
Ann. Sci. \'Ecole Norm. Sup. (4) 22 (1989), no. 4, 605-674. 

 \bibitem{MW2}
 \textsc{MÏglin, C.; Waldspurger, J.-L.}
{\sl Spectral decomposition and Eisenstein series.}
Cambridge Tracts in Mathematics, 113. Cambridge University Press, Cambridge, 1995. xxviii+338 pp.


  \bibitem{Rog}
 \textsc{Rogawski, Jonathan D.}
{\sl Automorphic representations of unitary groups in three variables.}
Annals of Mathematics Studies, 123. Princeton University Press, Princeton, NJ, 1990. 

\bibitem{Sc}
 \textsc{Schiffmann, GŽrard}
{\sl IntŽgrales d'entrelacement et fonctions de Whittaker. }
Bull. Soc. Math. France 99 (1971), 3-72. 

 \bibitem{S1}
 \textsc{Selberg, Atle}
{\sl Discontinuous groups and harmonic analysis.}
 Proc. Internat. Congr. Mathematicians (Stockholm, 1962) pp. 177-189. 1963 
 
 \bibitem{Tits}
 \textsc{Tits, J.}
{\sl Reductive groups over local fields.}
 Automorphic forms, representations and L-functions 
 (Proc. Sympos. Pure Math., Oregon State Univ., Corvallis, Ore., 1977), Part 1, pp. 29-69,
Proc. Sympos. Pure Math., XXXIII, Amer. Math. Soc., Providence, R.I., 1979.


\end{thebibliography}

\end{document}